\documentclass[12pt,reqno]{article}

\setlength{\hoffset}{-2cm}
\setlength{\voffset}{-1.3cm}
\setlength{\textwidth}{15.75cm}
\setlength{\textheight}{22.275cm}

\usepackage{amsfonts}
\usepackage{color}
\usepackage{amsmath}
\usepackage{amsthm}
\usepackage{amssymb}
\usepackage{mathrsfs}
\usepackage{amstext}
\usepackage{graphicx}
\usepackage{tikz}
\evensidemargin0.5cm

\sloppy

\numberwithin{equation}{section}

\theoremstyle{plain}
\newtheorem{satz}{Theorem}[section]
\newtheorem{defi}[satz]{Definition}
\newtheorem{cor}[satz]{Corollary}\newtheorem{lem}[satz]{Lemma}
\newtheorem{prop}[satz]{Proposition}
\newtheorem{rem}[satz]{Remark}

\newcommand{\re}{\ensuremath{\mathbb{R}}}\newcommand{\N}{\ensuremath{\mathbb{N}}}
\newcommand{\zz}{\ensuremath{\mathbb{Z}}}\newcommand{\C}{\ensuremath{\mathbb{C}}}
\newcommand{\T}{\ensuremath{\mathbb{T}^d}}\newcommand{\tor}{\ensuremath{\mathbb{T}}}

\newcommand{\Z}{{\ensuremath{\zz}^d}}

\newcommand{\R}{\ensuremath{{\re}^d}}

\newcommand{\rank}{{\rm rank \, }}

\newcommand{\mix}{{\rm mix}}
\newcommand{\eps}{\varepsilon}

\newcommand{\bproof}{\begin{proof}}
\newcommand{\eproof}{\end{proof}}

\newlength{\fixboxwidth}
\setlength{\fixboxwidth}{\marginparwidth}
\addtolength{\fixboxwidth}{-0pt}

\newcommand{\be}{\begin{equation}}
\newcommand{\ee}{\end{equation}}
\newcommand{\beq}{\begin{eqnarray}}
\newcommand{\beqq}{\begin{eqnarray*}}
\newcommand{\eeq}{\end{eqnarray}}
\newcommand{\eeqq}{\end{eqnarray*}}

\begin{document}
\title{How anisotropic mixed smoothness affects the decay of singular numbers for Sobolev embeddings}

\author{Thomas K\"uhn$^{a, }$
Winfried Sickel$^b$ and Tino Ullrich$^c$
 \\\\
$^a$ Universit\"at Leipzig, Augustusplatz 10, D-04109 Leipzig\\
$^b$ Friedrich-Schiller-Universit\"at Jena, Ernst-Abbe-Platz 2, D-07737 Jena\\
$^c$ TU Chemnitz, Reichenhainer Strasse 39, D-09126 Chemnitz\\\\
}


\date{\today}

\maketitle

\begin{abstract}
We continue the research on the asymptotic and preasymptotic decay of singular numbers for tensor product Hilbert-Sobolev type embeddings in high dimensions with special emphasis on the influence of the underlying dimension $d$. The main focus in this paper lies on tensor products involving univariate Sobolev type spaces with different smoothness. We study the embeddings into $L_2$ and $H^1$. In other words, we investigate the worst-case approximation error measured in $L_2$ and $H^1$ when only $n$ linear measurements of the function are available. Recent progress in the field shows that accurate bounds on the singular numbers are essential for recovery bounds using only function values. The asymptotic bounds in our setting are known for a long time. In this paper we contribute the correct asymptotic constant and explicit bounds in the preasymptotic range for $n$. We complement and improve on several results in the literature. In addition, we refine the error bounds coming from the setting where the smoothness vector is moderately increasing, which has been already studied by Papageorgiou and Wo{\'z}niakowski.
\paragraph{Keywords} singular numbers, anisotropic mixed smoothness, tensor product, energy space, rate of convergence, asymptotic constant, preasymptotics, dimensional dependence.

\medskip
\noindent
{\bf Mathematics Subject Classifications (2000)} \ 42A10; 41A25; 41A63; 46E35; 65D15
\end{abstract}


\section{Introduction}


In the present paper we aim at approximating $d$-variate functions from tensor product Hilbert-Sobolev type spaces
$$
  H^{s_1} \otimes_2 \cdot \ldots \cdot \otimes_2  H^{s_d}\,,
$$
built upon $L_2(D,\varrho)$. Here, we denote with $H^s:=H^{s,q}$ a univariate Hilbert-Sobolev type space with (fractional smoothness) parameter $s>0$, a fine-index 
$q$, $1\le q \le \infty$, and inner product
\[
	\langle f,g \rangle_{H^{s,q}}:=\sum\limits_{k\in I} c_k(f)\overline{c_k(g)}
	(1+ |k|^{q})^{2s/q}\,,
\]
where $c_k (f)$ denotes the Fourier coefficient of $f\in L_2(D,\varrho)$ with respect to a given orthonormal basis $(e_k)_{k\in I}$ of $L_2(D,\varrho)$  indexed by $I = \N_0$ or $I = \zz$. Note that this setting is rather general since $L_2$ may be any space of square integrable univariate functions on an interval $D$ with respect to a measure $\varrho$. The induced norms $\|\cdot|H^{s,q}\|$ are equivalent for different $q$ and fixed $s$ such that the topology is invariant with respect to $q$. However, in the sequel we will point out the particular role of the index $q$, such that we keep it in the notation of the norm. In fact, its influence on the structure of the unit ball increases if $d$ is getting large. The space
$H^{s_1,q_1}\otimes_2 \cdot \ldots \cdot \otimes_2  H^{s_d,q_d}$
is isometrically isomorphic to the space of all functions from $L_2(D\times...\times D)$ where
\begin{equation}\label{f0}
	\|f|H^{\vec{s},\vec{q}}_{\mix}\|^2:=\sum\limits_{k\in I\times ... \times I} |c_k(f)|^2\prod\limits_{j=1}^d (1+|k_j|^{q_j})^{2s_j/q_j}
\end{equation}
is finite. Now $c_k(f)$ denotes the respective Fourier coefficient for the tensor product system $(e_{k_1} \otimes ... \otimes e_{k_d})_{k \in I \times ...\times I}$ in $L_2(D\times ... \times D)$. The subspace characterized with \eqref{f0} will be denoted by $H^{\vec{s},\vec{q}}_{\text{mix}}$ in the sequel. Particular examples of spaces within this framework are (classical) Sobolev spaces of mixed smoothness $H^{\vec{s},\vec{q}}_{\text{mix}}(\tor^d)$ on the $d$-torus $\tor^d = [0,2\pi]^d$. For these spaces the norm is rather natural since we may rewrite it in 
terms of  derivatives measured in $L_2(\tor^d)$ if $q = 2s$ and $q=\infty$, see Subsection \ref{Sob_torus} below and \cite{KSU2}. Although most of the theorems in this paper are formulated in this periodic framework, the general setting from above allows for dealing also with certain classes of non-periodic functions which are represented in a system different from the tensorized Fourier basis. In fact, using for instance the half period cosine system $(e_k(\cdot))_k = (\cos(\pi k \cdot))_{k \in \N_0}$ the results in this paper also apply to the space of non-periodic functions $H^s_{\mix}([0,1]^d)$ if $s<3/2$ (natural norm in case $s=1)$, see \cite{AdIsNo12} and \cite{WerWoz08}. But also tensor product spaces built upon Legendre, Chebychev or other orthogonal Jacobi polynomials fit into this framework when $D=[-1,1]$ and $d\varrho(x)$ is given by $\nu(x)\,dx$, where $\nu$ is the respective Jacobi weight. The Chebychev case is of particular interest, since the respective spaces are ``close'' to the non-periodic classical Sobolev spaces on $[-1,1]$.

The embedding 
$$
	I_d:H^{\vec{s},\vec{q}}_{\text{mix}}\to L_2\Big(\bigotimes_{i=1}^d D, \varrho^d\Big)
$$
is compact if $\vec{s}>0$ and 
the singular numbers $\sigma_n$ are given as the non-increasing rearrangement of the square root of the eigenvalues of $I_d^{\ast}\circ I_d:H^{\vec{s},\vec{q}}_{\text{mix}}\to H^{\vec{s},\vec{q}}_{\text{mix}}$, which are precisely determined by the reciprocal of the weight appearing in \eqref{f0}. This essentially means that one has access to all the singular numbers. The ``only'' task remaining is to rearrange this multi-indexed sequence in a non-increasing order and to study its decay. 

The singular numbers represent an important tool for the approximation of operators. It is well-known that in case of a compact operator between Hilbert spaces $X$ and $Y$ approximation numbers $a_n(T:X\to Y)$ (defined below) and singular numbers $\sigma_n$ (often called singular values) of the operator $T: X \to Y$ coincide. The approximation numbers of a bounded linear operator $T:X\to Y$ between two Banach spaces are defined as
\be\label{0002}
 \begin{split}
a_n (T:\, X \to Y)&:= \inf\limits_{\rank A<n} \,\sup\limits_{\|x|X\|\leq 1}\|\, Tx -Ax|Y\|\\
&= \inf\limits_{\rank A<n}\|T-A:X \to Y\| , \qquad n \in \N\,.
 \end{split}
\ee
{~}\\
They describe the best approximation of $T$ by finite rank operators.
Note that for compact operators between Hilbert spaces the approximation numbers coincide with all other $s$-numbers, like Kolmogorov or Gelfand numbers, see, e.g., \cite[Section 11.3.]{Pi78}.

Let us emphasize that in many recent works the recovery of functions  represented in a tensorized polynomial basis plays an important role for the treatment of parametric and stochastic elliptic PDEs, see for instance \cite{ChkCoMiNoTe15} or \cite{RaWa12} and the references therein. The mentioned recovery is often in the sampling sense, i.e., from $n$ given function values. It turned out recently, see Krieg, M. Ullrich \cite{KrUl19} and also K\"ammerer, Volkmer, T. Ullrich \cite{KUV}, that accurate bounds on the approximation numbers of an embedding directly yield recovery guarantees for the sampling recovery problem, i.e., new bounds for sampling numbers $g_n$. These represent a counterpart of $a_n$ where the operator $A$ in \eqref{0002} is a sampling operator taking only $n$ function values as information. Let us also mention the recent paper by D\~ung, Thao \cite{DuTh20} in this context.


From a technical viewpoint it will be convenient for us to rearrange the variables according to the smoothness $s_j$ in 
the respective direction. Let $\vec{s} := (s_1, \ldots \, ,s_d)$ be such that
\be\label{ws02}
s_1 = s_2 = \ldots = s_\nu < s_{\nu +1} \le \ldots \le s_d
\ee
for some $\nu$, $1\le \nu <d$.
This can  be interpreted as an ordering with respect to the importance of the variables. Assume for instance, that $f \in H^{\vec{s},\vec{q}}_{\text{mix}}(\tor^d)$ is a rank-$1$ tensor, i.e.,
\[
 f(x)= f_1(x_1)\, \cdot \, \ldots \, \cdot f_d(x_d)\,,
\]
with $x_i \in D,  i=1,...,d$. Then $f_1, \ldots \, f_\nu$ are those univariate  functions which require more effort to be approximated with a certain precision than the others.
In this sense we say that the variables $x_1, \, \ldots \, , x_\nu$ are more important than
$x_{\nu+1}, \, \ldots \, ,x_d$. Note that finding these important variables is a difficult task when it comes to algorithms. A possible ``algorithm'' $A$, which realizes the $n$th approximation number,  is often supposed to ``know'' in which direction the function is ``rough'' and in which direction the function is ``smooth''. This would be sufficiently determined by knowing the parameters of the function class and, in particular, which eigenfunctions are more important than others. Note that our setting represents one way to reduce the number of ``important variables''. There exist various other models. One approach works via introducing weights in the norm on groups of variables. The size of these weights directly determine the influence of certain groups of variables. Let us refer at least to the pioneering work of  Sloan and  Wo{\'z}niakowski \cite{SW}.
More references and detailed information may be found in the monograph \cite{NoWo08} and the more recent papers by
Werschulz and Wo{\'z}niakowski \cite{WW} or D\~ung, Ullrich \cite{DiUl13}. There are some other important problem-related approaches 
which, however,  are not directly related to our investigations here.

 For simplicity we will stick to the classical example of periodic Sobolev spaces with mixed smoothness on 
the $d$-torus $\tor^d$ in order to connect directly to the forerunners \cite{KSU1, KSU2, KMU, Ku, CoKuSi15} and \cite{DK}. 
The mechanism how to transfer the results to different settings is rather clear. It is well-known since the 1960s, 
see e.g. Mityagin \cite{Mi} and Telyakovskii \cite{Tel}, that the asymptotic decay is given by
$$
   c_d n^{-s_1}(\log n)^{(\nu-1)s_1} \leq a_n(I_d:H^{\vec{s},\vec{q}}_{\text{mix}}(\tor^d)\to L_2(\tor^d)) \leq C_d n^{-s_1}(\log n)^{(\nu-1)s_1}\,.
$$
This gives a sharp rate of convergence, where only the number of important variables (in the above sense) plays a role. 
However, since the rate of convergence does not involve any dependence on $d$ one may expect that the dimension 
$d$ (and $q$) shows up in the constants. This is indeed the case. In this paper we give the following statement on 
the ``asymptotic constant'' which extends the results from \cite{KSU2} to the anisotropic mixed smoothness case. It holds
\be\label{ws13_i}
C(d):= \lim_{n\to\infty}  \frac{n^{s_1}\,a_n (I_d: \, H^{\vec{s},\vec{q}}_\mix (\T)\to L_2 (\T))}{(\ln n)^{(\nu-1)s_1}}
= \Big[\frac{2^\nu}{(\nu - 1)!}\, \prod_{j= \nu+1}^d B_j\Big]^{s_1}\, ,
\ee
where
\[
B_j := 1+ 2 \sum_{m=1}^\infty (1+ m^{q_j})^{-\frac{s_j}{s_1\, q_j}}\, , \qquad j=\nu + 1 \, , \ldots \, d\, .
\]
If a sequence $(s_j)_{j=1}^\infty$ with
$$
s_1 =...= s_\nu < s_{\nu+1} \le s_{\nu+2}\le ...
$$
is given, we may consider the $d$-indexed family of embeddings corresponding
to the smoothness vectors $(s_1,...,s_d)$, $d\in\N$. Then, for $d \ge \nu$, the
constants $C(d)$ are strictly increasing in $d$, since all $B_j > 1$. However,
for certain constellations of the parameters, the constants $C(d)$ stay bounded,
for instance if $\vec{q}=\vec{1}$ and
$$
s_j \ge s_1(1+\beta \log_2 j)
$$
for all $j \ge 2$ and some $\beta > 1$\,. 

In any case, such a result does not tell much about the preasymptotic range. The range of small $n$ (say below $2^d$) 
represents the important range for numerical computations. In order to achieve reasonable bounds in the preasymptotic 
range, we use a technique in Section 4 which has its origins in a different context. It is based on an elementary 
counting lemma, see Lemma \ref{clever}. In its simplest form the Lemma deals with upper bounds for the cardinality 
of Zaremba crosses, see Kuo, Sloan, Wo{\'z}niakowski \cite{KSW},  Cools, Kuo, Nuyens \cite{CKN}. Recently Krieg \cite{DK} 
also used a version of this lemma to rearrange tensor power sequences. Our version of this counting technique allows for 
improving on the preasymptotic bounds in \cite{KSU2, Ku, DK} when $\vec{s}$ is a constant vector, i.e. $\nu = d$. Indeed, 
we observe in Theorem \ref{smalldd} below that for $d\geq 3$, $s>0$ and $\vec{q}= (q,q,\ldots \, , q)$ for some $q\geq 1$
\begin{equation}\label{f1}
	a_n (I_d: \, H^{\vec{s},\vec{q}}_\mix (\T)\to L_2 (\T)) \leq \Big(\frac{\tilde{C}(d)}{n}\Big)^{\frac{s}{q(1+\log_2(d-1))}}\,\quad,\quad n\geq 2.
\end{equation}
Note that the constant $\tilde{C}(d)$ ranges in the interval $[2.718, 6.25]$ although it depends on $d$. To be more precise, if $d>5$ then the constant is strictly smaller than $5$, 
which represents a small improvement over \cite{Ku} and \cite{DK}. 
Indeed, the case $q = 2s$ is particularly important since it represents a natural Sobolev norm where 
only the highest derivative is taken into account. It seems that in this case a higher smoothness does 
not increase the exponent in the bound. This observation can be already found in \cite{DK}. Note that, since the range for 
$n$ is not limited, we may easily infer tractability results from this bound (quasi-polynomial tractability, see \cite{KSU2}).


In case of a non-constant smoothness vector $\vec{s}$ the situation is more involved. If the ``first jump'' from $s_\nu$ to $s_{\nu+1}$ is ``small'' then one may use the natural embedding into $H^{\vec{s_1},q}_{\mix}$, where
$\vec{s_1}=(s_1,...,s_1)$, and apply the previously mentioned results. If the first jump is large, say logarithmically in $d$, then the influence of the less important variables $x_{\nu+1},...,x_d$ disappears. Indeed, Theorem \ref{Small1} gives the following bound in case
$$
   \frac{s_{\nu+1}}{s_1} \ge \frac{\log_2 (d-\nu)}{1+ \log_2 (\nu-1)}\, .
$$
It holds
\[
a_n(I_d:H^{\vec{s},\vec{1}}_{\mix}(\T) \to L_2(\T)) \leq \Big(\frac{38.02}{n}\Big)^{\frac{s_1}{1+\log_2 (\nu-1)}}\quad,\quad n\geq 2.
\]
This result indicates that a logarithmic growth condition on the smoothness vector may lead to a polynomial decay of the approximation numbers also in high dimensions $d$. We study this phenomenon in Subsection \ref{numberc}.
In a certain sense this supplements the findings of
Papageorgiou and Wo{\'z}niakowski \cite{PW} by giving precise constants and approximation rates. There the authors prove that the following assertions are equivalent:
\begin{itemize}
 \item There exists  constants $C>0$ and  $p>0$ such that for all $d\in \N$ and all $n \in \N$
 \[
  a_n (I_d:~ H^{s_1}(\tor) \otimes_2 \cdot \ldots \cdot \otimes_2  H^{s_d}(\tor) \to L_2(\T) ) \le C \, n^{-p}\, .
 \]
 \item
There exists constants $C>0$, $p>0$ and  $q>0$ such that for all $d\in \N$ and all $n \in \N$
 \[
  a_n (I_d:~ H^{s_1}(\tor) \otimes_2 \cdot \ldots \cdot \otimes_2  H^{s_d}(\tor) \to L_2(\T)) \le C \, d^q \, n^{-p}\, .
 \]
 \item
 The elements of the sequence $(s_j)_j$ have to increase sufficiently fast, more precisely
 \[
  \limsup_{j\to \infty} \, \frac{\ln j}{s_j} < \infty \, .
 \]
\end{itemize}
In the language of Information Based Complexity
(IBC) the first property is called strong polynomial tractability, the second one polynomial tractability. 
This characterization shows that in case of a moderate growing smoothness vector $\vec{s}$ 
the worst-case errors decay well also in high dimensions. However, it also shows that the problem 
may get more difficult if $s_j$ is not growing properly, which is indicated by the preasymptotic results above. 
In Corollary \ref{smallecd2} we provide the following precise bound when the smoothness vector $\vec{s}$ is 
logarithmically growing. Let $d\geq 2$ and $\vec{q} = \vec{1}$ be the constant $1$-vector. If
\begin{equation}\label{faa}
 s_j \ge (1+ \beta \, \log_2 j) \, s_1\, , \qquad j \in \N\, ,
\end{equation}
for some $\beta >0$ then
\[
a_n(I_d:H^{\vec{s},\vec{q}}_{\mix}(\T) \to L_2(\T)) \leq \Big(\frac{A_\alpha\, e^{C_{\alpha,\beta}}}{n}\Big)^{\frac{s_1}{\alpha}}
\]
for any $\alpha>1/\beta$ and $n\in \N$, see Theorem \ref{smallcdf}. Here $A_\alpha$  and $C_{\alpha,\beta}$ 
are explicit constants in $\alpha $ and $\beta$, see \eqref{Konstante}, Remark \ref{berechnung} and \eqref{konstante4}.

As we will see below, the $d$-dependence of the error decay is determined by  the chosen norm in the source space. To understand  this dependence, we shall work with a family of norms for the univariate Sobolev spaces indexed by $q$. As already mentioned above, these norms are all equivalent on $H^s(\tor)$, the associated norms on the tensor product are,
for fixed $d$, equivalent as well, but the equivalence constants will depend on $d$.
Comparing the results of Section \ref{numberlarge} and Section \ref{numbersmall}
one becomes aware of an enormous difference what concerns the influence of the parameter $\vec{q}$.
Whereas for large $n$ the influence of $\vec{q}$ is only visible in the constants (see \eqref{ws13_i}), the influence is rather strong for small $n$ as \eqref{f1} indicates.


\nocite{GO}
In the final Section \ref{mixediso} we continue the investigations started by Griebel and Knapek \cite{GrKn08} on the decay of the approximation numbers for embeddings into the energy space $H^1(\tor^d)$. These findings have been complemented recently by D\~ung, Ullrich \cite{DiUl13}, Byrenheid et al. \cite{ByDiWiUl14} and two of the authors together with S. Mayer in \cite{KMU}. Based on our new bounds from Section \ref{numbersmall} we are able to essentially improve on these bounds in the literature. This is of interest from at least two points of view. Approximation in the energy norm is of particular importance in connection with
the approximation of solutions of elliptic equations, e.g., the Poisson equation, see \cite{GO}.
Secondly, it is of interest from an inner mathematical point of view.
Whereas $L_2 (\T)$ is a tensor product space, $H^1(\T)$ does not have  such a structure.
So we have a break of the scale when we embed $ H^{s_1}(\tor) \otimes_2 \cdot \ldots \cdot \otimes_2  H^{s_d}(\tor)$ into $H^1(\T)$. 
In \cite{GrKn08} the authors used so-called energy-norm based sparse grids in order to find the index-set for the optimal subspace. 
This index set is essentially determined by the rearrangement of a multi-indexed sequence defined via a quotient of a 
non-tensor product weight and a tensor product weight, see \eqref{f3}. Since this weight is no longer a tensor-product weight we 
avoid rearrangements and rather apply a classical technique mainly used in the field of non-linear approximation. The main results 
read as follows (see Propositions \ref{main1}, \ref{main2}). If $s>1$, $d\in \N$, $d\geq 4$ and $\vec{s} = (s,...,s)$, then
$$
	a_n(I_d:H^{\vec{s},\vec{2}}_{\mix}(\tor^d) \to H^1(\tor^d)) \leq
	\Big(\frac{e^2}{n}\Big)^{\frac{s-1}{2\log_2(d)}}\quad,\quad n\geq 8.
$$
This result is non-trivial in the sense that it improves on the straight-forward bound which \eqref{f1} implies via embedding. 
If $s$ is small compared to $d$, in particular smaller than $\log_2(d)$, we even get the following result, which represents a 
further improvement if $d$ is large:
$$
	a_n(I_d:H^{\vec{s},\vec{2}}_{\mix}(\tor^d) \to H^1(\tor^d))
	\leq \sqrt{d}\Big(\frac{C(d)}{n}\Big)^{\frac{s}{2(1+\log_2(d-1))}}\quad,\quad n\in \N\,,
$$
with
$$
	C(d) = e\,(2.154 + 3/d)\,.
$$

The framework used in this paper essentially goes back to Mityagin \cite[pp. 397, 409]{Mi} in 1962 and 
Telyakovskii \cite[p. 438]{Tel} in 1964 and has been later used by several authors from the former Soviet Union, 
e.g., Galeev \cite{Ga}. See also the book \cite{DuTeUl19} and the references in Section 10.1. The above defined 
spaces experienced a renaissance in the nineties of the last century and are called nowadays anisotropic mixed 
smoothness Sobolev spaces. We refer to \cite{T93b, Tem17, DuTeUl19} and the references therein. 
Within the Information Based Complexity  community  we would like to mention the paper by Papageorgiou and 
Wo{\'z}niakowski \cite{PW}. This paper has initiated further interest, we refer to
\cite{DiGr, DGHR, GHHRW}, \cite{HH}, \cite{IKPW, KPW} and \cite{Sid}.
However, only \cite{PW} is really close to our setting.
Closer to us are the papers by Cobos, K\"uhn, Sickel \cite{CoKuSi15} ($L_\infty$ approximation),
by Krieg \cite{DK} (dominating mixed smoothness, periodic and nonperiodic),
by Wang et al. \cite{CW1,CW2,HW} (anisotropic Sobolev spaces, Sobolev spaces on the sphere) as well as the papers by Mieth \cite{Mie}
and Novak \cite{No19} (approximation on general domains).

The paper is organized as follows. In Section \ref{spaces} we collect some preliminaries like the definition of the spaces
under consideration and some basic properties of approximation and singular numbers.
The next Section \ref{numberlarge}
will be devoted to the study of the asymptotic constants in case that $\vec{s}$ is not a constant vector.
In Section  \ref{numbersmall} we derive estimates for  the approximation numbers $a_n$ in the  preasymptotic range.
Finally, in Section \ref{mixediso} we give new bounds for embeddings of the spaces $H^{\vec{s},\vec{2}}_\mix (\T)$
into $H^1(\T)$.

{\bf Notation.} As usual, $\N$ denotes the natural numbers, $\N_0$ the non-negative integers,
$\zz$ the integers and
$\re$ the real numbers. By $\tor$ we denote the torus, represented by the interval $[0,2\pi]$, where
the end points of the interval are identified.
For a real number $a$ we put $a_+ := \max\{a,0\}$ and denote by $\lfloor a \rfloor$ the greatest integer not larger than $a$.
The letter $d$ is always reserved for the dimension in $\Z$, $\R$, $\N^d$, and $\T$.
For $0<p\leq \infty$ and $x\in \R$ we denote $|x|_p = (\sum_{i=1}^d |x_i|^p)^{1/p}$ with the
usual modification for $p=\infty$. If $\alpha\in \N_0^d$ and $x \in \C^d$ we use
$x^\alpha:=\prod_{i=1}^d x_i^{\alpha_i}$ with the convention $0^0:=1$. The symbol $\# \Omega$ stands for the cardinality of the set $\Omega$.
If $X$ and $Y$ are two Banach spaces, the norm
of an element $x$ in $X$ will be denoted by $\|x|X\|$ and the norm of an operator
$A:X \to Y$ by $\|A:X\to Y\|$. The symbol $X \hookrightarrow Y$ indicates that there is a
continuous embedding from $X$ into $Y$. The equivalence $a_n\sim b_n$ means that there are constants $0<c_1\le c_2<\infty$ such that
$c_1a_n\le b_n\le c_2a_n$ for all $n\in\N$.
If $\vec{s} = (s_1, \ldots \, , s_d)$ and $\vec{q} = (q_1, \ldots \, , q_d)$ are given, then
\[
 \frac{\vec{s}}{\vec{q}} := \Big(\frac{s_1}{q_1}, \, \ldots \, , \frac{{s_d}}{{q_d}}\Big)\, .
\]
Furthermore, $\vec{s} \ge  {\vec{q}}$ means $s_j \ge q_j$ for all $j$.


\section{Preliminaries}\label{spaces}



\subsection{Sobolev spaces with anisotropic mixed smoothness}
\label{Sob_torus}


As mentioned in the introduction, our  setting is rather general and not restricted to periodic functions. 
The essential ingredient is the weight function appearing in \eqref{f0}. However, for simplicity we will state all 
results for function spaces on the $d$-torus $\T$,
which is represented in the Euclidean space $\R$ by the cube
$\tor^d = [0,2\pi]^d$, where opposite faces are identified.
In particular, for functions $f$ on $\tor$, we have $f(x) = f(y)$ whenever $x-y = 2\pi k$ for some $k\in \zz$. These
functions can be viewed as $2\pi$-periodic in each component.

The space $L_2(\T)$ consists of all (equivalence classes of) measurable functions $f$ on $\T$ such that the norm
$$
    \|f|L_2(\T)\|:=\Big(\int_{\T} |f(x)|^2\,dx\Big)^{1/2}
$$
is finite. The entire information of a function $f\in L_2(\T)$ is encoded
in the sequence $(c_k(f))_k$ of its Fourier coefficients, given by
\[
c_k (f):= \frac{1}{(2\pi)^{d/2}} \, \int_{\T} \, f(x)\, e^{-ik\cdot x}\, dx\, , \qquad k
\in
\Z\, .
\]
Indeed, we have Parseval's identity
\be\label{Pars}
    \|f|L_2(\T)\|^2 = \sum\limits_{k\in \Z} |c_k(f)|^2
\ee
as well as
\[
f(x) = \frac{1}{(2\pi)^{d/2}}\, \sum_{k \in \Z} \, c_k (f)\, e^{ik\cdot x}
\]
with convergence in $L_2(\T)$.

The (anisotropic) Sobolev space $H^{\vec{m}}_\mix(\T)$ of  smoothness $\vec{m}\in \N^d$ is the collection
of all $f\in L_2 (\T)$ such that all
distributional partial derivatives $D^\alpha  f$ of order $\alpha = (\alpha_1,...,\alpha_d)$ with
$\alpha_j\le m_j$, $j=1,...,d$, belong to $L_2 (\T)$.
It is usually normed by
\begin{equation}\label{t-1}
\| \, f \, |H^{\vec{m}}_{\mix} (\T)\| := \Big(
\sum_{{\alpha_j \le  m_j \atop j=1, \ldots \, , d}}
\| \, D^\alpha  f \, |L_2 (\T)\|^2\Big)^{1/2} \, .
\end{equation}
One can rewrite this definition in terms of Fourier coefficients.
Taking $c_k(D^{\alpha}f) = (ik)^{\alpha}c_k(f)$ into account, Parseval's identity \eqref{Pars}
implies (note that we put $0^0 := 1$)
\beqq
\| \, f \, |H^{\vec{m}}_{\mix} (\T)\|^2 & = &
\sum_{\substack{\alpha_j \leq m_j \\ j=1,..,d}}
\Big\| \, \frac{1}{(2\pi)^{d/2}}\sum_{k \in \Z} \, c_k (f)\, (ik)^\alpha  e^{ik\cdot x} \,
\Big|L_2 (\T)\Big\|^2
\\
\nonumber
& = & \sum\limits_{k\in \Z} |c_k(f)|^2 \, \Big(\sum_{\substack{0\leq \alpha_j\leq m_j\\
j=1, \ldots ,d}}
\prod\limits_{j=1}^d |k_j|^{2\alpha_j}\Big)\\
\nonumber
&=& \sum\limits_{k\in \Z} |c_k(f)|^2 \, \Big(\prod\limits_{j=1}^d\sum\limits_{\alpha_j = 0}^{m_j}
|k_j|^{2\alpha_j}\Big) =  \sum\limits_{k\in \Z} |c_k(f)|^2 \,\prod\limits_{j=1}^d \omega_{m_j}(k_j)^2\,,
\nonumber
\eeqq
where
\[
  \omega_m(\ell)^2 = \sum\limits_{n=0}^m |\ell|^{2n}\,.
\]
Due to our convention $0^0=1$ we have $\omega_m(0)=1$.
Defining
\[
w_{\vec{m}}(k) := \prod_{j=1}^d \omega_{m_j}(k_j)\qquad\text{for } k = (k_1,...,k_d) \in \Z\,,
\]
we obtain
\be\label{neu2}
   \| \, f \, |H^{\vec{m}}_{\mix} (\T)\| = \Big[\sum\limits_{k\in \Z} |c_k(f)|^2 \, w_{\vec{m}}(k)^2\Big]^{1/2}\,.
\ee
We could also have started with the equivalent norm
\be\label{norm3}
\| \, f \, |H^{\vec{m}}_\mix (\T)\|^* := \Big(\sum_{\substack{\alpha_j \in \{0,m_j\} \\ j=1, \ldots \, d}} \| \, D^{\alpha}f\, |L_2
(\T)\|^2\Big)^{1/2} \, .
\ee
Similarly as above, a reformulation of (\ref{norm3}) in terms of Fourier coefficients yields
\be\label{norm4}
\| \, f \, |H^{\vec{m}}_\mix (\T)\|^*  =
 \,
 \Big[\sum_{k \in \Z} \, |c_k (f)|^2  \prod\limits_{j=1}^d (1+|k_j|^{2m_j}) \Big]^{1/2}\,.
\ee

Inspired by  \eqref{norm4}  we define Sobolev spaces of dominating
mixed anisotropic smoothness of fractional order $\vec{s}$ as follows.

\begin{defi}\label{varnorms}
Let $\vec{s} = (s_1, \ldots \, , s_d)$, $\min_j \, s_j >0$, and let
$\vec{q}= (q_1, \, \ldots \, ,q_d)$ such that $0 <  q_j \le  \infty$ for all $j$.
The periodic  dominating mixed anisotropic Sobolev space $H^{\vec{s},\vec{q}}_{\mix}(\T)$
is the collection of all $f\in L_2(\T)$
such that
\[
   \| \, f \, |H^{\vec{s}, \vec{q}}_{\mix} (\T)\|  :=
\, \Big[
 \sum_{k \in \Z} \, |c_k (f)|^2 \prod\limits_{j=1}^d\big(1+|k_j|^{q_j}\big)^{2s_j/q_j} \Big]^{1/2} < \infty\, , 
\]
where $\big(1+|k_j|^{q_j}\big)^{2s_j/q_j}$
has to be replaced by $\max \big(1, |k_j|\big)^{2s_j}$ if $q_j = \infty$.
\end{defi}

\begin{rem}
 \rm
(i) Obviously we have
$H^{\vec{m},\vec{q}}_{\mix} (\T) = H^{\vec{m}}_{\mix} (\T)$,  $\vec{m}\in \N^d$,
in the sense of equivalent norms. In the fractional case it follows
$H^{\vec{s},\vec{q}}_{\mix} (\T) = H^{\vec{s},\vec{2}}_{\mix} (\T)$
in the sense of equivalent norms.
Here $\vec{2}$ refers to the sequence $(2, \ldots \, ,2)$.
\\
(ii) The sequence of parameters $\vec{q}$ modifies the norm in a controlled way. Here
in our paper it will be used to demonstrate how much certain results depend on the chosen norm.
\\
(iii) If $s = s_1 = s_2 = \ldots =s_d $, then we simply write $H^{s}_{\mix} (\T)$ and call the space Sobolev space  
of dominating mixed smoothness of fractional order $s$.
\\
(iv) The spaces $H^{\vec{s}}_{\mix} (\T)$ have played a significant role in the Russian approximation theory literature, 
see, e.g., the papers by
Mityagin \cite{Mi}, Telyakovskij \cite{Tel}, Nikol'skaya \cite{Nia}, Galeev \cite{Ga} or the
monographs of Temlyakov \cite{T93b} and \cite{Tem17}.
\\
(v) Most important for us will be the cases $\vec{q}=(1, 1,  \ldots \, ,1) = \vec{1}$, $\vec{q}=(2, 2,  \ldots \,, 2) = \vec{2}$
and
$\vec{q}=(\infty, \infty,  \ldots \, ,\infty) = \vec{\infty}$. In the latter case
the norm reads as
\[
   \| \, f \, |H^{\vec{s}, \vec{\infty}}_{\mix} (\T)\|  :=
\, \Big[
 \sum_{k \in \Z} \, |c_k (f)|^2 \prod\limits_{j=1}^d\big(\max (1, |k_j|)\big)^{2s_j} \Big]^{1/2} < \infty \, .
\]
\end{rem}

Clearly, there is a monotonicity of the norms with respect to $\vec{q}$, i.e.,
\begin{equation}\label{ws-unendlich}
\|\, f\, |H^{\vec{s}, \vec{\infty}}_{\mix}(\T)\|\le
\|\, f\, |H^{\vec{s},\vec{q}}_{\mix}(\T)\|
\end{equation}
for all $f\in H^{\vec{s}}_\mix(\T)$.

Later on we shall need the following observation.
Let $\vec{s}$ and  $\vec{q}$ be given.
Then we define
\[
\vec{s}/\vec{q} =
 \frac{\vec{s}}{\vec{q}}:= \Big(\frac{s_1}{q_1}, \ldots \, , \frac{s_d}{q_d}\Big)\, .
\]

\begin{lem}\label{wichtig} Let $\vec{s}= (s_1, \ldots \, s_d)$, $\min_j s_j >0$.
Then for all $\vec{q}$, $1 \le  q_j< \infty $, $j=1, \ldots \, , d$,  and all
$f \in H^{\vec{s},\vec{q}}_{\mix}(\T)$ the following inequality holds
\[
\|\, f\, |H^{\vec{s}/\vec{q}, \vec{1}}_{\mix}(\T)\|\le
\|\, f\, |H^{\vec{s},\vec{q}}_{\mix}(\T)\| \, .
\]
\end{lem}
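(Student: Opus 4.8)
The plan is to compare the two norms term-by-term in their Fourier representations. Writing $f \in H^{\vec{s},\vec{q}}_{\mix}(\T)$ with Fourier coefficients $c_k(f)$, the norm $\|f|H^{\vec{s},\vec{q}}_{\mix}(\T)\|^2$ equals $\sum_{k\in\Z} |c_k(f)|^2 \prod_{j=1}^d (1+|k_j|^{q_j})^{2s_j/q_j}$, while $\|f|H^{\vec{s}/\vec{q},\vec{1}}_{\mix}(\T)\|^2$ equals $\sum_{k\in\Z} |c_k(f)|^2 \prod_{j=1}^d (1+|k_j|)^{2s_j/q_j}$. Since both are weighted $\ell_2$-sums over the same index set with the same coefficients, it suffices to show that the first weight dominates the second pointwise in $k$, i.e.
\[
\prod_{j=1}^d (1+|k_j|)^{2s_j/q_j} \le \prod_{j=1}^d (1+|k_j|^{q_j})^{2s_j/q_j}
\]
for every $k\in\Z$. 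Because all factors are positive, this reduces further to the scalar inequality, for each coordinate: $(1+|k_j|)^{2s_j/q_j} \le (1+|k_j|^{q_j})^{2s_j/q_j}$.

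Since $t\mapsto t^{2s_j/q_j}$ is increasing on $[0,\infty)$ (as $s_j>0$ and $q_j>0$), the coordinate inequality follows from the even more elementary estimate $1+|k_j| \le 1+|k_j|^{q_j}$, i.e.\ $|k_j| \le |k_j|^{q_j}$. For integers this holds: if $k_j=0$ both sides are $0$, and if $|k_j|\ge 1$ then $|k_j|^{q_j} \ge |k_j|$ precisely because $q_j\ge 1$. This is the only place the hypothesis $q_j\ge 1$ enters, and it is what makes the lemma work. Multiplying these coordinatewise inequalities over $j=1,\ldots,d$ and then summing against $|c_k(f)|^2$ over $k\in\Z$ gives $\|f|H^{\vec{s}/\vec{q},\vec{1}}_{\mix}(\T)\|^2 \le \|f|H^{\vec{s},\vec{q}}_{\mix}(\T)\|^2$; taking square roots finishes the argument. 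In particular $f \in H^{\vec{s},\vec{q}}_{\mix}(\T)$ implies the right-hand side is finite, so the left-hand side is finite and $f$ indeed lies in $H^{\vec{s}/\vec{q},\vec{1}}_{\mix}(\T)$.

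There is essentially no obstacle here: the statement is a monotonicity estimate between weight sequences, and the entire content is the trivial fact $|k| \le |k|^{q}$ for integers $k$ and real $q\ge 1$. The only points requiring (minimal) care are that the map $t\mapsto t^{2s_j/q_j}$ is monotone — which needs $s_j,q_j>0$, already in the hypotheses — and that the inequality $|k|\le|k|^q$ genuinely uses $q\ge 1$ (it fails for $0<q<1$ and $|k|\ge 2$), so the assumption $1\le q_j<\infty$ is exactly the right one. One could phrase the whole proof in one or two displayed lines; I would write it out coordinatewise for clarity since the product structure is what is being exploited.
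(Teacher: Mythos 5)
Your argument is correct and is essentially the same as the paper's: reduce to the pointwise weight comparison $1+|k_j|\le 1+|k_j|^{q_j}$ (valid for integers $k_j$ when $q_j\ge 1$), raise to the power $2s_j/q_j$, take the product over $j$, and sum against $|c_k(f)|^2$. The paper states the scalar inequality directly without spelling out why $|k_j|\le|k_j|^{q_j}$; your extra remark that this is exactly where $q_j\ge 1$ is used is a harmless elaboration.
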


\begin{proof}
 Since $1 \le q_j < \infty$  we have for all $k \in \Z$ and all $j$
 \[
  (1+|k_j|)\le (1+|k_j|^{q_j}) \, .
 \]
 Taking this to the power $2 s_j/q_j$ and switching to the product we obtain
 \[
 \prod_{j=1}^d (1+|k_j|)^{2 s_j/q_j} \le \prod_{j=1}^d (1+|k_j|^{q_j})^{2 s_j/q_j}
\]
Hence
\[
 \sum_{k \in \Z} \, |c_k (f)|^2 \prod\limits_{j=1}^d\big(1+|k_j|\big)^{2s_j/q_j}
 \le
 \sum_{k \in \Z} \, |c_k (f)|^2 \prod\limits_{j=1}^d\big(1+|k_j|^{q_j}\big)^{2s_j/q_j}
 \]
 which proves the claim.
\end{proof}

\begin{rem}\rm
Observe that this simple argument used in the proof does not extend to vectors $\vec{q}$
containing at least one component in $(0,1)$.
\end{rem}

Later on we shall also need the following elementary fact.

\begin{lem}\label{norm_two} Let $\vec{m} \in \N^d$.
Then for all $f \in H^{\vec{m}}_{\mix}(\T)$ the following chain of inequalities holds.
\[
\|\, f\, |H^{\vec{m}, \vec{\infty}}_{\mix}(\T)\|\le
\|\, f\, |H^{\vec{m}}_{\mix}(\T)\|^* \le
\|\, f\, |H^{\vec{m}}_{\mix}(\T)\| \le  \|\, f \, |H^{\vec{m},\vec{2}}_{\mix}(\T)\|\, .
\]
\end{lem}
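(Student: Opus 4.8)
The plan is to reduce the whole chain to elementary pointwise inequalities between the Fourier multipliers. By \eqref{neu2} and \eqref{norm4} we have, for $f\in H^{\vec{m}}_{\mix}(\T)$,
\[
\|f|H^{\vec{m}}_{\mix}(\T)\|^2=\sum_{k\in\Z}|c_k(f)|^2\prod_{j=1}^d\Big(\sum_{n=0}^{m_j}|k_j|^{2n}\Big),\qquad
\big(\|f|H^{\vec{m}}_{\mix}(\T)\|^{*}\big)^2=\sum_{k\in\Z}|c_k(f)|^2\prod_{j=1}^d\big(1+|k_j|^{2m_j}\big),
\]
while Definition \ref{varnorms}, applied with $\vec{s}=\vec{m}$ and the constant vectors $\vec{q}=\vec{2}$, respectively $\vec{q}=\vec{\infty}$, gives
\[
\|f|H^{\vec{m},\vec{2}}_{\mix}(\T)\|^2=\sum_{k\in\Z}|c_k(f)|^2\prod_{j=1}^d\big(1+|k_j|^{2}\big)^{m_j},\qquad
\|f|H^{\vec{m},\vec{\infty}}_{\mix}(\T)\|^2=\sum_{k\in\Z}|c_k(f)|^2\prod_{j=1}^d\big(\max(1,|k_j|)\big)^{2m_j}.
\]
All four weights are products over the coordinates $j=1,\dots,d$, so it suffices to compare the one-dimensional factors. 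The claim is that for every integer $m\ge 1$ and every real $t\ge 0$
\[
\max(1,t)^{2m}\ \le\ 1+t^{2m}\ \le\ \sum_{n=0}^{m}t^{2n}\ \le\ \big(1+t^{2}\big)^{m}.
\]

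For the first inequality one distinguishes the cases $t\le 1$ (left side equals $1\le 1+t^{2m}$) and $t\ge 1$ (left side equals $t^{2m}\le 1+t^{2m}$). The second holds because $\sum_{n=0}^{m}t^{2n}=1+t^{2}+\dots+t^{2m}$ contains the summands $n=0$ and $n=m$ together with further nonnegative terms. The third is the binomial identity $(1+t^{2})^{m}=\sum_{n=0}^{m}\binom{m}{n}t^{2n}$ combined with $\binom{m}{n}\ge 1$ for $0\le n\le m$.

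It remains to assemble these. Applying the scalar chain with $t=|k_j|$ and $m=m_j$, and multiplying over $j=1,\dots,d$ — which preserves the inequalities since every factor involved is $\ge 1$ — gives the corresponding pointwise chain of inequalities between the four weights at each $k\in\Z$. Multiplying by $|c_k(f)|^2\ge 0$, summing over $k\in\Z$, and taking square roots yields the asserted chain of norm inequalities. (Since $f\in H^{\vec m}_{\mix}(\T)$ the two middle quantities are finite, hence so is the left-most one; the right-most is finite as well, e.g.\ because $(1+t^2)^m\le 2^m\sum_{n=0}^m t^{2n}$, although this is not strictly needed as the inequalities hold in $[0,\infty]$ in any case.) There is no genuine obstacle here; the only point needing a little attention is to read off the correct normalizations from Definition \ref{varnorms}, namely that $\vec{q}=\vec{2}$ produces the factor $(1+|k_j|^2)^{m_j}$ and $\vec{q}=\vec{\infty}$ the factor $\max(1,|k_j|)^{2m_j}$.
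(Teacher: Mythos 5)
Your proof is correct. The paper states Lemma \ref{norm_two} as an ``elementary fact'' without proof, and your argument — reducing the chain of norm inequalities to the pointwise scalar chain $\max(1,t)^{2m}\le 1+t^{2m}\le\sum_{n=0}^{m}t^{2n}\le(1+t^2)^m$ for $t\ge 0$ and $m\ge 1$, then tensorizing coordinate by coordinate and summing against $|c_k(f)|^2$ — is exactly the intended elementary verification. One small remark: the middle inequality $1+t^{2m}\le\sum_{n=0}^{m}t^{2n}$ uses $m\ge 1$, which is guaranteed since $\vec m\in\N^d$ and the paper's convention has $\N=\{1,2,\dots\}$; it is worth noting that this is the step which would fail for $m=0$.
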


In the Introduction we were dealing with tensor products of univariate Sobolev spaces.
The connection to the spaces $H^{\vec{s},\vec{q}}_{\mix}(\T)$ will become clear with the next lemma.
Observe that Definition \ref{varnorms} makes sense also for $d=1$.
For two Hilbert spaces $H_1, H_2$ the symbol
$H_1 \otimes_2 H_2 $ denotes their tensor product, see, e.g., \cite[3.4]{We}
for the basics.
The symbol $H_1 \otimes_2 \ldots \otimes_2 H_d $ has to be interpreted as iterated tensor product, i.e.,
\[
H_1 \otimes_2 H_2 \otimes_2 H_3 := (H_1 \otimes_2 H_2) \otimes_2 H_3
\]
and so on.

\begin{lem}\label{tensor}
 Let $\vec{s} = (s_1, \ldots \, , s_d)$, $s_j \in \re$ for all $j$, and let
$\vec{q}= (q_1, \, \ldots \, ,q_d)$ such that $0 <  q_j \le  \infty$ for all $j$.
Then periodic anisotropic mixed Sobolev space $H^{\vec{s},\vec{q}}_{\mix}(\T)$
coincides with the tensor product of the univariate Sobolev spaces
$H^{{s_1},{q_1}}(\T)$, $\ldots$, $H^{{s_d},{q_d}}(\T)$.
More exactly, we have
\[
H^{\vec{s},\vec{q}}_{\mix}(\T)=
  H^{s_1,q_1}(\tor) \otimes_2 \cdot \ldots \cdot \otimes_2  H^{s_d,q_d}(\tor) \, ,
\]
and
\[  \| \, \cdot \, |H^{\vec{s}, \vec{q}}_{\mix} (\T)\| = \|\, \cdot\, |
 H^{s_1,q_1}(\tor) \otimes_2 \cdot \ldots \cdot \otimes_2  H^{s_d,q_d}(\tor)\|
 \, .
\]
\end{lem}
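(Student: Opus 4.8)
The plan is to establish the isometric isomorphism $H^{\vec{s},\vec{q}}_{\mix}(\T) \cong H^{s_1,q_1}(\tor) \otimes_2 \cdots \otimes_2 H^{s_d,q_d}(\tor)$ by exhibiting a common orthonormal basis on both sides and checking that the defining norms agree on it. First I would recall that for a single torus factor the exponentials $(\varphi_k)_{k\in\zz}$ with $\varphi_k(x) = (2\pi)^{-1/2}e^{ikx}$ form an orthonormal basis of $L_2(\tor)$, and that by Definition~\ref{varnorms} (in the case $d=1$) the functions $e_k := (1+|k|^{q})^{-s/q}\varphi_k$ (with the usual modification for $q=\infty$) form an orthonormal basis of $H^{s,q}(\tor)$. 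Hence, by the standard construction of the Hilbert space tensor product (see \cite[3.4]{We}), the family of elementary tensors $e_{k_1}\otimes\cdots\otimes e_{k_d}$, $k\in\Z$, is an orthonormal basis of $H^{s_1,q_1}(\tor)\otimes_2\cdots\otimes_2 H^{s_d,q_d}(\tor)$.

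Next I would identify the elementary tensor $\varphi_{k_1}\otimes\cdots\otimes\varphi_{k_d}$ with the function $x\mapsto (2\pi)^{-d/2}e^{ik\cdot x}$ on $\T$, which is exactly the $d$-variate Fourier basis; these form an orthonormal basis of $L_2(\T)$ by Parseval's identity \eqref{Pars}. Under this identification,
\[
  e_{k_1}\otimes\cdots\otimes e_{k_d} \;=\; \Big(\prod_{j=1}^d (1+|k_j|^{q_j})^{-s_j/q_j}\Big)\,(2\pi)^{-d/2}e^{ik\cdot x},
\]
and I would check that the map sending the abstract orthonormal basis vector $e_{k_1}\otimes\cdots\otimes e_{k_d}$ of the tensor product to this concrete function in $L_2(\T)$ extends by linearity and density to a unitary operator $U$ from the tensor product onto its image. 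A function $f = \sum_k c_k(f)\,(2\pi)^{-d/2}e^{ik\cdot x}$ lies in this image with $U^{-1}$-preimage of finite norm precisely when $\sum_k |c_k(f)|^2 \prod_j (1+|k_j|^{q_j})^{2s_j/q_j} < \infty$, i.e.\ precisely when $f\in H^{\vec{s},\vec{q}}_{\mix}(\T)$, and in that case the tensor-product norm of the preimage equals $\|f|H^{\vec{s},\vec{q}}_{\mix}(\T)\|$. This gives both the set equality of the spaces and the equality of norms.

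The main obstacle — really the only nontrivial point — is the careful bookkeeping that the algebraic tensor product of the exponentials maps onto a \emph{dense} subspace of $H^{\vec{s},\vec{q}}_{\mix}(\T)$ and that the norm identity, verified on finite linear combinations of basis vectors, passes to the completion; this is routine once one notes that trigonometric polynomials are dense in $H^{\vec{s},\vec{q}}_{\mix}(\T)$ (immediate from the Fourier-coefficient description, since truncating the sum over $k$ gives an approximation in norm). For the case $q_j=\infty$ the same argument works verbatim with $(1+|k_j|^{q_j})^{s_j/q_j}$ replaced by $\max(1,|k_j|)^{s_j}$ throughout, as in Definition~\ref{varnorms}. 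I would also remark that associativity of the iterated tensor product is compatible with this identification, since the concrete Fourier basis of $L_2(\T)$ does not depend on the order in which the one-dimensional factors are combined.
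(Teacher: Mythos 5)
Your argument is essentially identical to the paper's proof: both exhibit the weighted exponentials as orthonormal bases of the univariate spaces, invoke the standard fact that tensor products of orthonormal bases form an orthonormal basis of the Hilbert tensor product, identify these with the $d$-variate Fourier basis, and pass to the completion via density. The paper organizes the last step as an induction on $d$, but this is only a presentational difference.
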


\begin{proof} For convenience of the reader we will give a proof.
We will use the following fact. If $(e^1_j)_{j=0}^\infty$ is an orthonormal basis 
in the  Hilbert space $H_1$ and 
if $(e^2_j)_{j=0}^\infty$ is an orthonormal basis 
in the Hilbert space $H_2$, then 
 $(e^1_j \otimes e^2_{\ell})_{j,\ell=0}^\infty$ is an orthonormal basis 
in the tensor product Hilbert space $H_1 \otimes_2 H_2$, see, e.g., 
\cite[Satz 3.12 on pages 52/53]{We}.
Clearly, 
\[
 \frac{e^{ikx}}{\sqrt{2\pi}\, \big(1+|k|^{q}\big)^{s/q}}\, , \qquad 
 x\in \re, \quad  k\in \zz\, , 
\]
is an orthonormal basis in $H^{s,q}(\tor)$ and
\[
e_{\vec{k}}(x):=  \prod_{j=1}^d \frac{e^{ik_jx_j}}{\sqrt{2\pi}\, \big(1+|k_j|^{q_j}\big)^{s_j/q_j}}\, , \qquad 
 x\in \R, \quad  k\in \zz^d\, , 
\]
is an orthonormal basis in $H^{\vec{s},\vec{q}}_{\mix}(\T)$.
Let us turn to $d=2$ for a moment.
By means of the quoted result the functions 
$(e_{\vec{k}})_{\vec{k}\in \zz^2}$ form an orthonormal basis for 
$H^{s,q}(\tor) \otimes_2 H^{s,q}(\tor)$ as well. From this fact we conclude
\[
 \Big\| \, \sum_{\vec{k}\in I} a_{\vec{k}}\, e_{\vec{k}}\, \Big|H^{s,q}(\tor) \otimes_2 H^{s,q}(\tor)\Big\| = 
\Big(\sum_{\vec{k} \in I} |a_{\vec{k}}|^2\Big)^{1/2} = 
\Big\| \, \sum_{\vec{k}\in I} a_{\vec{k}}\, e_{\vec{k}}\, \Big|
H^{\vec{s},\vec{q}}_{\mix}(\T)\Big\|
 \]
for any set $I \subset \zz^2$ of finite cardinality and any sequence
$(a_{\vec{k}})_{\vec{k}}$ of complex numbers.
Those functions $\sum_{\vec{k}\in I} a_{\vec{k}}\, e_{\vec{k}}$
are dense in $H^{s,q}(\tor) \otimes_2 H^{s,q}(\tor)$ by definition 
and in 
$H^{\vec{s},\vec{q}}_{\mix}(\T)$ by a short calculation.
Hence, the spaces and the norms coincide. The general case $d\ge 2$
follows by induction.
\end{proof}

Also the Sobolev space $H^{\vec{m}}_{\mix}(\T)$ can be interpreted as a tensor product of univariate Sobolev spaces.

\begin{lem}\label{tensor2}
 Let $\vec{m} = (m_1, \ldots \, , m_d)$, $m_j \in \N_0$ for all $j$.
Then the periodic anisotropic mixed Sobolev space $H^{\vec{m}}_{\mix}(\T)$
coincides with the tensor product of the univariate Sobolev spaces
$H^{{s_1}}(\T)$, $\ldots$, $H^{{s_d}}(\T)$.
More exactly, we have
\[
H^{\vec{m}}_{\mix}(\T)=
  H^{m_1}(\tor) \otimes_2 \cdot \ldots \cdot \otimes_2  H^{m_d}(\tor) \, ,
\]
and
\[  \| \, \cdot \, |H^{\vec{m}}_{\mix} (\T)\| = \|\, \cdot\, |
 H^{m_1}(\tor) \otimes_2 \cdot \ldots \cdot \otimes_2  H^{m_d}(\tor)\|
 \, .
\]
\end{lem}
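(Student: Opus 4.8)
The statement to prove is Lemma \ref{tensor2}: the anisotropic mixed Sobolev space $H^{\vec{m}}_{\mix}(\T)$ (with the norm \eqref{neu2}) coincides isometrically with the iterated tensor product $H^{m_1}(\tor) \otimes_2 \cdot \ldots \cdot \otimes_2 H^{m_d}(\tor)$. The plan is to mimic the proof of Lemma \ref{tensor} almost verbatim, the only difference being that the relevant weight is now $w_{\vec{m}}(k) = \prod_{j=1}^d \omega_{m_j}(k_j)$ with $\omega_m(\ell)^2 = \sum_{n=0}^m |\ell|^{2n}$ rather than the product weight $\prod_j (1+|k_j|^{q_j})^{s_j/q_j}$. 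First I would observe that, by \eqref{neu2}, for each fixed $j$ the system
\[
  \Bigl(\frac{e^{ik x}}{\sqrt{2\pi}\,\omega_{m_j}(k)}\Bigr)_{k\in\zz}
\]
is an orthonormal basis of $H^{m_j}(\tor)$, since $\omega_{m_j}(k)>0$ for every $k$ (here one uses $\omega_m(0)=1$ coming from the convention $0^0:=1$) and the map $f\mapsto (c_k(f)\omega_{m_j}(k))_k$ is a unitary equivalence between $H^{m_j}(\tor)$ and $\ell_2(\zz)$.

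Next I would invoke the same Hilbert-space fact quoted in the proof of Lemma \ref{tensor} (\cite[Satz 3.12]{We}): the tensor product of two orthonormal bases is an orthonormal basis of the tensor product Hilbert space. Applying this inductively over the $d$ factors, the functions
\[
  e_{\vec{k}}(x) := \prod_{j=1}^d \frac{e^{ik_j x_j}}{\sqrt{2\pi}\,\omega_{m_j}(k_j)}\,, \qquad k\in\zz^d\,,
\]
form an orthonormal basis of $H^{m_1}(\tor) \otimes_2 \cdot \ldots \cdot \otimes_2 H^{m_d}(\tor)$. On the other hand, by \eqref{neu2}, the very same $(e_{\vec{k}})_{\vec{k}\in\zz^d}$ form an orthonormal basis of $H^{\vec{m}}_{\mix}(\T)$. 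Then for any finite $I\subset\zz^d$ and any finite scalar sequence $(a_{\vec{k}})$ one has
\[
  \Bigl\|\sum_{\vec{k}\in I} a_{\vec{k}} e_{\vec{k}}\,\Big|\,H^{m_1}(\tor)\otimes_2\cdots\otimes_2 H^{m_d}(\tor)\Bigr\|
  = \Bigl(\sum_{\vec{k}\in I}|a_{\vec{k}}|^2\Bigr)^{1/2}
  = \Bigl\|\sum_{\vec{k}\in I} a_{\vec{k}} e_{\vec{k}}\,\Big|\,H^{\vec{m}}_{\mix}(\T)\Bigr\|\,,
\]
and since such finite linear combinations are dense in both spaces, the identity map extends to an isometric isomorphism; this gives both the equality of spaces and the equality of norms.

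The main (and only real) obstacle is a bookkeeping one: making sure the iterated tensor product is handled correctly, i.e. carrying out the induction on $d$ exactly as in Lemma \ref{tensor}, where at each step one applies the two-factor result to $(H^{m_1}(\tor)\otimes_2\cdots\otimes_2 H^{m_{d-1}}(\tor))\otimes_2 H^{m_d}(\tor)$. There is no genuine difficulty since the weight $w_{\vec{m}}$ is still a product weight in the multi-index $k$, and positivity of every $\omega_{m_j}(k_j)$ is exactly what is needed to normalize the basis. In fact, since by Remark after Definition \ref{varnorms} one has $H^{\vec{m}}_{\mix}(\T) = H^{\vec{m},\vec{2}}_{\mix}(\T)$ with an \emph{equivalent} (not equal) norm, Lemma \ref{tensor2} is not a formal consequence of Lemma \ref{tensor}; the point of the separate statement is precisely that with the \emph{natural} norm \eqref{neu2} the tensor-product identification is \emph{isometric}, which is why the short direct argument above is required. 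One could alternatively remark that it suffices to prove the case $d=2$ and then appeal to associativity of the Hilbert tensor product together with an induction, but writing it out as above is cleanest.
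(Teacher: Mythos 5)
Your proof is correct and follows exactly the route the paper intends: the paper's own proof of Lemma \ref{tensor2} is the one-line remark that it ``follows by the same type of arguments as the proof of Lemma \ref{tensor},'' and you have simply written out those arguments with the weight $w_{\vec{m}}(k)=\prod_j\omega_{m_j}(k_j)$ in place of $\prod_j(1+|k_j|^{q_j})^{s_j/q_j}$. Your closing remark that the lemma is not a formal consequence of Lemma \ref{tensor} (because the norms are only equivalent, not equal) correctly identifies why the separate statement is worth making.
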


\begin{proof}
 The proof follows by the same type of arguments as the proof of Lemma \ref{tensor}.
\end{proof}


\subsection{Singular numbers of diagonal operators}
\label{sing_diag}


If $\tau = (\tau_n)_{n=1}^{\infty}$ is a sequence of real numbers with $\tau_1 \geq \tau_2 \geq ... \geq 0$\,,
we define the diagonal operator
$  D_{\tau}:\ell_2 \to \ell_2$ by $D_{\tau}(\xi) = (\tau_n \xi_n)_{n=1}^{\infty}$.
Recall the definition of the approximation numbers \eqref{0002} already given in the Introduction.
The following fact concerning approximation numbers of diagonal operators is well-known,
see e.g. Pietsch \cite[Theorem 11.3.2.]{Pi78}, K\"onig \cite[Section 1.b]{Ko}, Pinkus \cite[Theorem IV.2.2]{Pin},
or  Novak and Wo{\'z}niakowski \cite[Corollary 4.12]{NoWo08}. Comments on the history may be found in
Pietsch \cite[6.2.1.3]{Pi07}.

\begin{lem}\label{sing}
Let $\tau$ and $D_\tau$ be as above.
Then
\[
a_n (D_\tau:\, \ell_2 \to \ell_2) = \tau_n\, , \qquad n \in \N\, .
\]
\end{lem}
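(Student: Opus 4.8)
The plan is to establish the two inequalities $a_n(D_\tau) \le \tau_n$ and $a_n(D_\tau) \ge \tau_n$ separately, since approximation numbers are defined as an infimum and the matching lower bound requires a Bernstein-type argument.

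\textbf{Upper bound.} First I would exhibit an explicit rank-$(n-1)$ operator realizing the bound. Take $A := D_{\tau^{(n)}}$ where $\tau^{(n)} := (\tau_1, \ldots, \tau_{n-1}, 0, 0, \ldots)$, i.e. $A$ is the diagonal operator that keeps the first $n-1$ entries and kills the rest. Then $\rank A \le n-1 < n$, and $T - A = D_\tau - A$ is the diagonal operator with entries $(0, \ldots, 0, \tau_n, \tau_{n+1}, \ldots)$. Since the norm of a diagonal operator on $\ell_2$ equals the supremum of the absolute values of its entries, and since $\tau$ is non-increasing and non-negative, we get $\|T - A : \ell_2 \to \ell_2\| = \sup_{k \ge n} \tau_k = \tau_n$. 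Hence $a_n(D_\tau) \le \|T - A\| = \tau_n$ by the definition \eqref{0002}.

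\textbf{Lower bound.} For the reverse inequality, fix any operator $A$ with $\rank A < n$, so $\dim \ker A \ge \mathrm{codim}\,(\mathrm{ran}\,A^\ast)$ arguments give $\dim \ker A = \infty$; more to the point, consider the $n$-dimensional coordinate subspace $E_n := \Span\{e_1, \ldots, e_n\} \subset \ell_2$. Since $\rank A < n = \dim E_n$, the restriction $A|_{E_n}$ has nontrivial kernel, so there exists $x \in E_n$ with $\|x|\ell_2\| = 1$ and $Ax = 0$. Write $x = \sum_{k=1}^n \xi_k e_k$ with $\sum_{k=1}^n |\xi_k|^2 = 1$. Then
\[
\|(T - A)x|\ell_2\|^2 = \|Tx|\ell_2\|^2 = \sum_{k=1}^n \tau_k^2 |\xi_k|^2 \ge \tau_n^2 \sum_{k=1}^n |\xi_k|^2 = \tau_n^2\,,
\]
using that $\tau_k \ge \tau_n$ for $k \le n$. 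Therefore $\|T - A : \ell_2 \to \ell_2\| \ge \tau_n$, and taking the infimum over all admissible $A$ yields $a_n(D_\tau) \ge \tau_n$.

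Combining the two bounds gives $a_n(D_\tau : \ell_2 \to \ell_2) = \tau_n$ for every $n \in \N$. There is no real obstacle here: the only point requiring a moment of care is the dimension-counting step in the lower bound, where one must ensure that a rank-$(n-1)$ operator restricted to an $n$-dimensional subspace genuinely annihilates some unit vector — this is immediate from the rank--nullity theorem applied to $A|_{E_n}$. The result and these arguments are entirely standard; one may alternatively simply cite Pietsch \cite[Theorem 11.3.2]{Pi78} as already indicated.
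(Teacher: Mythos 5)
Your proof is correct and is the standard textbook argument (truncation for the upper bound, rank--nullity on the coordinate subspace $E_n$ for the lower bound). The paper does not actually prove Lemma~\ref{sing}; it simply cites the literature (Pietsch, K\"onig, Pinkus, Novak--Wo\'zniakowski), so there is no in-paper proof to compare against, but the argument you supply is exactly the one found in those references. One tiny remark: the half-sentence about $\dim\ker A = \infty$ is a distraction and not what you actually use --- the relevant fact is $\dim\ker(A|_{E_n}) \ge 1$, which you then correctly invoke; you could delete the earlier clause.
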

\noindent Here the index set of $\ell_2$ is $\N$. We need a modification for arbitrary
countable index sets $J$. Then the space $\ell_2(J)$ is the collection of all
$\xi = (\xi_j)_{j\in J}$ such that the norm
$$
    \|\xi|\ell_2(J)\|:=\Big(\sum\limits_{j\in J}|\xi_j|^2\Big)^{1/2}
$$
is finite. Let $w = (w_j)_{j\in J}$ with $w_j>0$ for all $j\in J$, and assume that for every $\delta>0$ there are
only finitely many $j\in J$ with $w_j\geq \delta$\,. Then the non-increasing rearrangement
$(\tau_n)_{n\in \N}$ of $(w_j)_{j\in J}$ exists, and $\lim_{n\to\infty} \tau_n = 0$.
Defining $D_w:\ell_2(J) \to \ell_2(J)$ by $D_w(\xi) = (w_j\xi_j)_{j\in J}$ for $\xi \in \ell_2(J)$, Lemma\
\ref{sing} gives
$$
  a_n(D_w:\ell_2(J)\to \ell_2(J)) = \tau_n\,.
$$
The preceding identity is scalable in the following sense.

\begin{lem}\label{scale}
Let $J$ be a countable index set, let $w=(w_j)_{j\in J}$ and $(\tau_n)_{n\in \N}$ be as above. Then,
setting $w^s=(w_j^s)_{j\in J}$, one has for any $s>0$
$$
  a_n(D_{w^s}:\ell_2(J)\to \ell_2(J)) = a_n(D_{w}:\ell_2(J)\to \ell_2(J))^s = \tau^s_n\,.
$$
\end{lem}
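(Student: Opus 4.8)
The plan is to reduce the claim about $D_{w^s}$ to the already-established identity for $D_w$ together with Lemma \ref{scale}'s own hypothesis bookkeeping. First I would observe that the non-increasing rearrangement of the weight family $(w_j^s)_{j\in J}$ is exactly $(\tau_n^s)_{n\in\N}$: since the map $t\mapsto t^s$ is strictly increasing on $(0,\infty)$ for $s>0$, it preserves the order of the values $w_j$, so rearranging $(w_j)_{j\in J}$ into non-increasing order and then applying $t\mapsto t^s$ gives the same sequence as rearranging $(w_j^s)_{j\in J}$ directly. One also needs to check that $(w_j^s)_{j\in J}$ still satisfies the summability-type condition required for the rearrangement to exist, namely that for every $\delta>0$ only finitely many $j$ have $w_j^s\ge\delta$; but $w_j^s\ge\delta \iff w_j\ge \delta^{1/s}$, and by hypothesis only finitely many $j$ satisfy the latter, so this is immediate.

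Next I would invoke the identity stated just before Lemma \ref{scale}, applied to the weight $w^s$ in place of $w$: since $(\tau_n^s)_{n\in\N}$ is the non-increasing rearrangement of $(w_j^s)_{j\in J}$, we get directly
\[
 a_n(D_{w^s}:\ell_2(J)\to\ell_2(J)) = \tau_n^s\,, \qquad n\in\N\,.
\]
On the other hand, the same identity applied to $w$ itself gives $a_n(D_w:\ell_2(J)\to\ell_2(J)) = \tau_n$, hence $a_n(D_w:\ell_2(J)\to\ell_2(J))^s = \tau_n^s$. Chaining these two equalities yields the asserted chain $a_n(D_{w^s}) = a_n(D_w)^s = \tau_n^s$.

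I do not expect any genuine obstacle here; the statement is essentially a bookkeeping remark. The only mild subtlety worth spelling out is the order-preservation argument for the rearrangement, and perhaps a remark that the diagonal operator $D_{w^s}$ is again compact (again immediate, since $\tau_n^s\to 0$ as $n\to\infty$ because $\tau_n\to 0$ and $s>0$), so that Lemma \ref{sing} applies in its extended form. Everything else is a direct substitution into the previously recorded identity $a_n(D_w) = \tau_n$.
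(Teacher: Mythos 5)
Your proof is correct, and it fills in exactly the routine verification the paper leaves implicit (the paper states Lemma \ref{scale} without proof, treating it as an immediate consequence of the identity $a_n(D_w)=\tau_n$). The two key observations — that $t\mapsto t^s$ is order-preserving so the rearrangement of $(w_j^s)$ is $(\tau_n^s)$, and that the finiteness condition on level sets is preserved under the power — are precisely what one needs to invoke the preceding identity for $w^s$ in place of $w$.
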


Now we can reduce our problem on embedding operators in function spaces to the considerably simpler context of diagonal operators in sequence spaces,
where the index set is $J=\Z$\,. We consider the operators
$$
  A_{\vec{s},\vec{q}}:~H^{\vec{s},\vec{q}}_{\mix}(\T) \to \ell_2(\Z)\qquad
\mbox{and}\qquad B_{\vec{s},\vec{q}}:~\ell_2(\Z) \to H^{\vec{s},\vec{q}}_{\mix}(\T)
$$
defined by
$$
   A_{\vec{s},\vec{q}}f := (u_{\vec{s},\vec{q}}(k) \, c_k(f))_{k\in \Z}\qquad
\mbox{and}\qquad B_{\vec{s},\vec{q}} \xi := (2\pi)^{-d/2}\sum\limits_{k\in \Z}
   \frac{\xi_k}{u_{\vec{s},\vec{q}}(k)} e^{ik\cdot x}\,,
$$
where the weights  $u_{\vec{s},\vec{q}}(k)$ are given by
\[
u_{\vec{s},\vec{q}}(k) := \prod_{j=1}^d\big(1+|k_j|^{q_j}\big)^{s_j/q_j}
\]
(standard modification if $q_j = \infty$).

Note the semi-group property of these weights, i.e., $u_{\vec{s},\vec{q}}(k)\cdot u_{\vec{t},\vec{q}}(k) = u_{\vec{s} + \vec{t},\vec{q}}(k)$.
Furthermore, we put for $k\in \Z$
\[
w(k) := \frac{u_{\vec{s},\vec{q}}(k)}{u_{\vec{t},\vec{q}}(k)}
\]
and make use of the associated diagonal operator $D_w$.
Then the following commutative diagram illustrates the situation
quite well in case $\vec{t}> \vec{s} \geq \vec{0}$:

\begin{center}
\tikzset{node distance=3cm, auto}

\begin{tikzpicture}
  \node (H) {$H^{\vec{t},\vec{q}}_{\mix}(\T)$};
  \node (L) [right of =H] {$H^{\vec{s},\vec{q}}_{\mix}(\T)$};
  \node (ell) [below of=H] {$\ell_2(\Z)$};
  \node (ell2) [right of=ell] {$\ell_2(\Z)$};
  \draw[->] (H) to node {$I_d$} (L);
  \draw[->] (H) to node {$A_{\vec{t},\vec{q}}$} (ell);
  \draw[->] (ell) to node {$D_w$} (ell2);
  \draw[->] (ell2) to node {$B_{\vec{s},\vec{q}}$} (L);
\end{tikzpicture}
\end{center}

By the definition of the norm $\|\cdot|H^{\vec{s},\vec{q}}_{\mix}(\T)\|$ it is clear
that $A_{\vec{s},\vec{q}}$ and $B_{\vec{s},\vec{q}}$ are isometries and $B_{\vec{s},\vec{q}} = (A_{\vec{s},\vec{q}})^{-1}$. For the embedding
$I_d:H^{\vec{t},\vec{q}}_{\mix}(\T) \to H^{\vec{s},\vec{q}}_{\mix}(\T)$ if $\vec{t}> \vec{s} \geq \vec{0}$ we obtain the factorization
\be\label{eq071}
    I_d =  B_{\vec{s},\vec{q}} \circ D_w \circ A_{\vec{t},\vec{q}}\,.
\ee
The multiplicativity of the approximation numbers applied to \eqref{eq071} implies
$$
  a_n(I_d) \leq \|A_{\vec{t},\vec{q}}\|\, \, a_n(D_w)\, \, \|B_{\vec{s},\vec{q}}\| = a_n(D_w) = \tau_n\,,
$$
where $(\tau_n)_{n=1}^{\infty}$ is the non-increasing rearrangement of
$(w(k))_{k\in \Z}$\,. The reverse inequality can be shown analogously. This gives the important identity
\be\label{eq072}
    a_n(I_d) = a_n(D_w) = \tau_n\,.
\ee
Let $\lambda>0$.
Due to the semi-group property mentioned above and Lemma \ref{scale}
we have in particular the nice property
\be\label{eq073}
    a_n(I_d:H^{\lambda \vec{s},\vec{q}}_{\mix}(\T)\to L_2 (\T)) = a_n(I_d:H^{\vec{s},\vec{q}}_{\mix}(\T)\to L_2(\T))^\lambda \, .
 \ee

Finally let us mention one more elementary fact.

\begin{lem}\label{back}
 Let $X,Y$ and $Z$ be Banach spaces.
 Suppose $X \hookrightarrow Y \hookrightarrow Z$. Denote by $I^1,I^2,I^3$ the identities
 mapping $X $ into $Y$,  $Y$ into $Z$ and $X$ into $Z$.
 If  $\| \,I^1 \, |X \to Y\| \le 1$ then
 \[
  a_n (I^3: ~ X \to Z)\le  a_n (I^2: ~ Y \to Z)
 \]
holds for all $n$.
\end{lem}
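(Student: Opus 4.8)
The plan is to factor the identity $I^3 : X \to Z$ through $Y$ and exploit the multiplicativity of approximation numbers together with the norm bound on the first map. Concretely, write $I^3 = I^2 \circ I^1$, which makes sense because the embeddings are genuine inclusions of sets, so composing the inclusion of $X$ into $Y$ with the inclusion of $Y$ into $Z$ recovers the inclusion of $X$ into $Z$.

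Next I would invoke the standard multiplicativity (ideal) property of $s$-numbers: for bounded linear operators $S : Y \to Z$ and $T : X \to Y$ one has
\[
 a_n(S \circ T : X \to Z) \le a_n(S : Y \to Z)\, \|T : X \to Y\|
\]
for all $n \in \N$. This is elementary — if $A$ has rank $< n$ and $\|S - A\| \le a_n(S) + \eps$, then $A \circ T$ has rank $< n$ and $\|S\circ T - A \circ T\| \le (a_n(S) + \eps)\|T\|$; letting $\eps \to 0$ gives the claim. Applying this with $S = I^2$ and $T = I^1$ yields
\[
 a_n(I^3 : X \to Z) \le a_n(I^2 : Y \to Z)\, \|I^1 : X \to Y\|\, .
\]
Finally, the hypothesis $\|I^1 : X \to Y\| \le 1$ makes the factor in front equal to at most $1$, giving $a_n(I^3 : X \to Z) \le a_n(I^2 : Y \to Z)$, as desired.

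There is essentially no obstacle here; the only point requiring a sentence of care is the observation that $I^3 = I^2 \circ I^1$ as operators, which follows simply because all three maps act as the identity on elements and the inclusions are compatible. Everything else is a direct application of the approximation-number inequality $a_n(ST) \le \|S\| a_n(T)$ (equivalently $a_n(ST) \le a_n(S)\|T\|$), which is part of the basic toolkit recalled in this section.
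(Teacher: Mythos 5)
Your proof is correct and follows the same route as the paper: factor $I^3 = I^2 \circ I^1$ and apply the multiplicativity of approximation numbers $a_n(ST)\le a_n(S)\,\|T\|$ together with $\|I^1\|\le 1$. The paper states the factorization as ``$I^3 = I^1 \circ I^2$'' (a harmless typo in composition order that you implicitly corrected), and merely cites multiplicativity, whereas you spell out the one-line argument for it; the content is identical.
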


\begin{proof}
 It is enough to have a look at the commutative diagram

\centerline{\begin{minipage}[u]{7cm}
\tikzset{node distance=5cm, auto}
\begin{tikzpicture}
  \node (H) {$X$};
  \node (L) [right of =H] {$Y$};
  \node (L2) [right of =H, below of =H, node distance = 2.5cm ] {$Z$};
    \draw[->] (H) to node {$I^1$} (L);
  \draw[->] (H) to node [swap] {$I^3$} (L2);
  \draw[->] (L) to node {$I^2$} (L2);
  \end{tikzpicture}
\end{minipage}}
\noindent
Because of $I^3 = I^1 \circ I^2$ the multiplicativity of the approximation numbers
yields the claim.
\end{proof}


\subsection{Approximation numbers of Sobolev embeddings}


In this subsection we will make a few more observations on the approximation numbers of the embedding
$I_d: \, H^{\vec{t},\vec{q}}_{\mix} (\T) \to L_2 (\T)$\,.
Recall
\[
 u_{\vec{t},\vec{q}}(k):= \prod\limits_{j=1}^d(1+ |k_j|^{q_j})^{t_j/q_j}\, ,
 \qquad k \in \Z \, ,
\]
(modification if $q_j = \infty$).
Due to Lemma \ref{sing} and \eqref{eq072} we have
\[
a_{n} (I_d: \, H^{\vec{t},\vec{q}}_{\mix} (\T) \to L_2 (\T)) = \tau_n\, , \qquad n \in \N\, ,
\]
where $(\tau_n)_{n\in\N}$ denotes the non-increasing rearrangement
of $(1/u_{\vec{t},\vec{q}}(k))_{k \in \Z}$.
We need a notation for the associated sequence of real numbers without repetitions.
Let
$(\vartheta_n)_{n=1}^\infty$ be the sequence of positive  real numbers such that
\[
\vartheta_1 := 1 < \vartheta_2 := 2^{\min_{j=1, \ldots \, d} s_j/q_j} < \vartheta_3 < \ldots < \vartheta_n < \ldots
\]
and
\be\label{ws23}
\{\vartheta_n: n\in\N\} =
\{u_{\vec{t},\vec{q}}(k): k\in\Z\}\,.
\ee
Define
\be\label{cardinal}
C(r,\vec{t}, \vec{q})
 := \# \, \Big\{ k \in \Z: \quad \prod_{j=1}^d\, (1+|k_j|^{q_j})^{t_j/q_j}  \le r\Big\}\, , \qquad
r \geq 1\, ,
\ee
(modification if $q_j = \infty$).
Then the function $f(r):= C(r,\vec{t}, \vec{q})$ is a piecewise constant function, the jumps are located in the points $\vartheta_n$.
These observations imply the following.

\begin{lem}\label{an1}
Let $\vec{t}>0$ and $\vec{q}$ be given.
Then, with  $n_m:= C(\vartheta_m,\vec{t}, \vec{q})$, $m \in \N$, we have
$$
a_{n_m} (I_d: \, H^{\vec{t},\vec{q}}_{\mix} (\T) \to L_2 (\T)) = 1/\vartheta_m  \, , \qquad m \in \N\, ,
$$
and
\[
 a_{n_m + 1} (I_d: \, H^{\vec{t},\vec{q}}_{\mix} (\T) \to L_2 (\T))
= \ldots =  a_{n_{m + 1}} (I_d: \, H^{\vec{t},\vec{q}}_{\mix} (\T) \to L_2 (\T))  = 1/\vartheta_{m+1}\, ,
\]
$m \in \N$.
\end{lem}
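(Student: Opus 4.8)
The plan is to read off the result directly from the structure established in the preceding pages, since essentially all the work has already been done. By \eqref{eq072} together with Lemma \ref{sing}, we know that $a_n(I_d:H^{\vec{t},\vec{q}}_{\mix}(\T)\to L_2(\T))=\tau_n$, where $(\tau_n)_{n\in\N}$ is the non-increasing rearrangement of the multi-indexed sequence $(1/u_{\vec{t},\vec{q}}(k))_{k\in\Z}$. The only remaining task is to bookkeep where the ``plateaus'' of this rearrangement begin and end, using the counting function $C(r,\vec{t},\vec{q})$ from \eqref{cardinal} and the enumeration of distinct values $(\vartheta_m)_{m\in\N}$ from \eqref{ws23}.

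First I would observe that the values taken by the sequence $(u_{\vec{t},\vec{q}}(k))_{k\in\Z}$ are exactly $\{\vartheta_m:m\in\N\}$ by \eqref{ws23}, so the values taken by $(1/u_{\vec{t},\vec{q}}(k))_{k\in\Z}$ are exactly $\{1/\vartheta_m:m\in\N\}$, and these are strictly decreasing in $m$ since the $\vartheta_m$ are strictly increasing. Next, by the very definition \eqref{cardinal}, $C(\vartheta_m,\vec{t},\vec{q})=\#\{k\in\Z:u_{\vec{t},\vec{q}}(k)\le\vartheta_m\}$ counts precisely the number of indices $k$ for which $1/u_{\vec{t},\vec{q}}(k)\ge 1/\vartheta_m$. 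Hence in the non-increasing rearrangement, the values $\ge 1/\vartheta_m$ occupy exactly the first $n_m:=C(\vartheta_m,\vec{t},\vec{q})$ positions; in particular the smallest of these, namely the value $1/\vartheta_m$ itself (which does occur, since $\vartheta_m\in\{u_{\vec{t},\vec{q}}(k)\}$), sits at position $n_m$. This gives $\tau_{n_m}=1/\vartheta_m$, which is the first claimed identity.

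For the second chain of equalities, I would note that the values strictly greater than $1/\vartheta_{m+1}$ are exactly those $\ge 1/\vartheta_m$ (there is nothing strictly between $1/\vartheta_{m+1}$ and $1/\vartheta_m$ in the value set), so they occupy positions $1,\dots,n_m$; and the values $\ge 1/\vartheta_{m+1}$ occupy positions $1,\dots,n_{m+1}$. Therefore the positions $n_m+1,\dots,n_{m+1}$ all carry the value $1/\vartheta_{m+1}$, i.e.\ $\tau_{n_m+1}=\dots=\tau_{n_{m+1}}=1/\vartheta_{m+1}$. Combining with $a_{n}=\tau_n$ yields both displayed lines.

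There is really no substantial obstacle here: the statement is a direct translation of \eqref{eq072} into the language of the counting function, and the only point requiring a word of care is that $\vartheta_m$ is an attained value of $u_{\vec{t},\vec{q}}$ (so that the plateau at height $1/\vartheta_m$ is nonempty and its right endpoint is genuinely $n_m$), which is guaranteed by \eqref{ws23}. One should also implicitly use that $\vec{t}>0$ ensures the weights $1/u_{\vec{t},\vec{q}}(k)$ tend to $0$, so the rearrangement $(\tau_n)$ exists, as discussed before Lemma \ref{scale}.
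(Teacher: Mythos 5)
Your proof is correct and is exactly the bookkeeping the paper leaves implicit (the paper states the lemma right after the definitions of $(\vartheta_m)$ and $C(r,\vec t,\vec q)$ with only the remark ``These observations imply the following'' and no written proof). Your version spells out precisely what the authors intended: identify $a_n$ with the non-increasing rearrangement $\tau_n$ of $(1/u_{\vec t,\vec q}(k))_k$ via \eqref{eq072}, then use that $C(\vartheta_m,\vec t,\vec q)$ counts positions carrying values $\ge 1/\vartheta_m$ to locate the plateau boundaries.
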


\begin{rem}\label{opti}
\rm
(i)
Of course, without precise information on the behavior of the quantities
$C(r,\vec{t}, \vec{q})$, Lemma \ref{an1} is not very useful
for practical purposes. But it provides, at least in
principle, complete knowledge on the sequence of approximation numbers \\
$a_n (I_d: \, H^{\vec{t},\vec{q}}_{\mix} (\T) \to L_2 (\T))$.
\\
(ii)
For any $n\in \N$, we can easily construct optimal algorithms $S_n$ of rank less than $n$.
We choose a set $\Lambda_n \subset \Z$ with the following properties:
the cardinality of $\Lambda_n$ equals $n-1$ and if $\ell \not\in \Lambda_n$
then
\[
\sup_{k \in \Lambda_n}\,  u_{\vec{t},\vec{q}}(k)
\le u_{\vec{t},\vec{q}}(\ell) \, .
\]
follows. With other words, we select $n-1$ vectors $k$ such that the associated
values $u_{\vec{s},\vec{q}}(k)$ are the smallest.
Then we define
\be\label{opti1}
S_n f (x):= \frac{1}{(2\pi)^{d/2}} \, \sum_{k \in \Lambda_n} c_k (f) \,
e^{ik\cdot x}\, , \qquad x \in \T\, .
\ee
Let $C(\vartheta_m,\vec{s},\vec{q}) < n \le
C(\vartheta_{m+1},\vec{s},\vec{q})$. By this construction we get
$$
    \sup\limits_{\|f|H^{\vec{t},\vec{q}}_{\mix}(\T)\| \leq 1} \, \|f-S_nf|L_2(\T)\|
= \frac{1}{\vartheta_{m+1}} = a_{n}(I_d: \, H^{\vec{t},\vec{q}}_{\mix} (\T) \to L_2 (\T))\,.
$$
\end{rem}


\section{Asymptotic behavior and constants}
\label{numberlarge}


Taking our convention \eqref{ws02} into account,
Mityagin \cite{Mi} was the first who showed the two-sided estimate
\begin{equation}\label{ws-06}
c_{\vec{s},\vec{q}}(d,\nu)  \, n^{-s_1}(\ln n)^{(\nu-1)s_1} \le
a_{n} (I_d: \, H^{\vec{s},\vec{q}}_\mix (\T) \to L_2 (\T)) \le
 C_{\vec{s},\vec{q}}(d,\nu)  \, n^{-s_1}(\ln n)^{(\nu-1)s_1}\,,
\end{equation}
for $n\in\N$, $n>1$. Here the constants $c_{\vec{s},\vec{q}}(d,\nu)$ and
$C_{\vec{s},\vec{q}}(d,\nu)$, depending only on $d,\nu, \vec{s}$ and $\vec{q}$,
were not explicitly determined.
Our main focus is  to clarify, for arbitrary but fixed $d$, $\nu$, $\vec{s}$ and $\vec{q}$, the
dependence of these constants on $d$ and $\nu$.
In fact, it is necessary to fix the norms, i.e., the $\vec{q}$ on the spaces
$H^{\vec{s}}_\mix (\T)$ in advance, since the
constants $c_{\vec{s},\vec{q}}(d,\nu)$ and $C_{\vec{s},\vec{q}}(d,\nu)$ in \eqref{ws-06} depend on the size of the respective unit balls.

We recall a result basically proved in  \cite{KSU2}.

\begin{prop}\label{lim2}
Let $d\in \N$. Let $\vec{s}$ be given by $s_1 = \ldots = s_d>0$ and $\vec{q}>0$ arbitrary.
Then
\be\label{nochwas}
 \lim_{n\to\infty}  \frac{n^{s_1}\,a_n (I_d: \, H^{\vec{s},\vec{q}}_\mix (\T)\to L_2 (\T))}{(\ln n)^{(d-1)s_1}}
= \Big[\frac{2^d}{(d - 1)!}\Big]^{s_1}\, .
\ee
\end{prop}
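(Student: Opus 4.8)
The plan is to reduce the statement about approximation numbers to a purely combinatorial question about counting lattice points, using the machinery already set up in Subsection~\ref{sing_diag}. By Lemma~\ref{an1}, the approximation numbers $a_n(I_d)$ are the non-increasing rearrangement of the sequence $(1/u_{\vec{s},\vec{q}}(k))_{k\in\Z}$, where $u_{\vec{s},\vec{q}}(k) = \prod_{j=1}^d (1+|k_j|^{q_j})^{s_1/q_j}$ since $s_1 = \ldots = s_d$. Moreover, by the scaling property~\eqref{eq073}, it suffices to treat the case $s_1 = 1$ and then raise everything to the power $s_1$; so the whole task is to show
\[
\lim_{n\to\infty} \frac{n\, a_n}{(\ln n)^{d-1}} = \frac{2^d}{(d-1)!}\,,
\]
where $a_n$ now corresponds to smoothness vector $(1,\ldots,1)$ with the given $\vec{q}$. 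Because $a_{n_m} = 1/\vartheta_m$ with $n_m = C(\vartheta_m,\vec{s},\vec{q})$, and the $a_n$ are piecewise constant between consecutive $n_m$, the limit relation is equivalent to the asymptotic counting estimate
\[
C(r,\vec{s},\vec{q}) = \#\Big\{k\in\Z : \prod_{j=1}^d (1+|k_j|^{q_j})^{1/q_j} \le r\Big\} \sim \frac{2^d}{(d-1)!}\, r\, (\ln r)^{d-1} \qquad (r\to\infty)\,.
\]
Indeed, taking logarithms, $n_m \le n < n_{m+1}$ forces $a_n = 1/\vartheta_{m+1}$, and feeding the counting asymptotics into $n \asymp C(\vartheta_{m+1},\cdot) \asymp \vartheta_{m+1}(\ln\vartheta_{m+1})^{d-1}$ yields $n\,a_n \sim (\ln n)^{d-1}\cdot 2^d/(d-1)!$ after checking that $\ln n \sim \ln\vartheta_{m+1}$ and that the jumps are not too large (the ratio $\vartheta_{m+1}/\vartheta_m \to 1$, which follows from density of the value set).

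The core of the argument is therefore the counting asymptotics for $C(r,\vec{s},\vec{q})$. Here I would proceed as follows. Write the condition as $\sum_{j=1}^d \ln(1+|k_j|^{q_j})^{1/q_j} \le \ln r =: R$. Set $\varphi_j(t) := \ln(1+|t|^{q_j})^{1/q_j} = \tfrac{1}{q_j}\ln(1+|t|^{q_j})$ for $t\in\zz$, so that $\varphi_j(t) \sim \ln|t|$ as $|t|\to\infty$, independently of $q_j$. The count $C(r,\vec{s},\vec{q})$ is the number of integer points in the region $\{(t_1,\ldots,t_d): \sum_j \varphi_j(t_j) \le R\}$. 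The model case is when all $\varphi_j(t) = \ln|t|$ (for $|t|\ge 1$), where the region is essentially $\prod|t_j| \le e^R = r$, and the number of such lattice points in $\zz^d$ is classically $\sim \frac{2^d}{(d-1)!}\, r(\ln r)^{d-1}$ — this is the standard divisor-type estimate $\sum_{m_1\cdots m_d \le r} 1 \sim \frac{r(\ln r)^{d-1}}{(d-1)!}$ for positive integers, with the factor $2^d$ accounting for the sign choices (and the "$+1$" in $1+|k_j|$ only perturbs the main term by lower order). To handle general $q_j$ I would compare $\varphi_j$ with $\ln|t|$: since $0 \le \ln|t| - \varphi_j(t) = \ln|t| - \tfrac{1}{q_j}\ln(1+|t|^{q_j}) = -\tfrac{1}{q_j}\ln(1+|t|^{-q_j}) \to 0$ monotonically, one sandwiches the region between two dilates of the model region differing by an $O(1)$ shift in $R$, i.e. a bounded multiplicative factor in $r$; a more careful two-sided estimate (using that each single variable contributes, for fixed values of the others, a count linear in its own effective range up to $O(1)$) recovers the same leading constant $\frac{2^d}{(d-1)!}$ and not merely the same order. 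Concretely, I expect to prove by induction on $d$ a clean two-sided bound of the shape $C(r,\vec{s},\vec{q}) = \frac{2^d}{(d-1)!}\, r(\ln r)^{d-1}(1 + o(1))$, peeling off the $d$-th coordinate: for fixed $k_1,\ldots,k_{d-1}$, the allowed $k_d$ range gives $2(e^{R-\sum_{j<d}\varphi_j(k_j)} + O(1))$ integers, and summing this over the $(d-1)$-dimensional slice reduces to the $(d-1)$-dimensional counting estimate plus a harmless error.

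The main obstacle I anticipate is precisely pinning down the \emph{constant} (as opposed to the order of growth) in the passage from the model region $\prod|t_j|\le r$ to the true region defined by the $\varphi_j$, uniformly enough to survive the $d$-fold iteration, and similarly controlling the error terms $O(1)$ per coordinate so that after summing over a $(d-1)$-dimensional box of logarithmic size they remain $o(r(\ln r)^{d-1})$. This is a matter of careful but routine bookkeeping: the $o(1)$ corrections from the "$1+|t|^{q_j}$ versus $|t|^{q_j}$" discrepancy are summable and localized near small $|k_j|$, contributing only $O(r(\ln r)^{d-2})$, which is absorbed. The translation from the counting asymptotics back to the statement about $a_n$ — controlling $\ln n$ versus $\ln\vartheta_m$ and the gap $\vartheta_{m+1}/\vartheta_m$ — is entirely analogous to the constant-$\vec{q}$ argument in \cite{KSU2} and presents no new difficulty, so I would cite that reference for the final bookkeeping step. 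Since Proposition~\ref{lim2} is stated as ``basically proved in \cite{KSU2}'', the proof here would mainly consist of recording how the $q_j$-dependence washes out in the limit, i.e. that the family of norms indexed by $\vec{q}$ all yield the same asymptotic constant because $\varphi_j(t)\sim\ln|t|$ regardless of $q_j$.
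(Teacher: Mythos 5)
Your plan is correct in outline and reaches the same reduction as the paper — namely, that by Lemma~\ref{an1} and the scaling identity~\eqref{eq073} it suffices to establish the counting asymptotics $C(r,\vec{1},\vec{q})\sim \frac{2^d}{(d-1)!}\,r(\ln r)^{d-1}$ and then transfer this to $a_n$ via monotonicity. Where you diverge from the paper is in how that counting estimate is obtained. You propose to reprove it from scratch by a Dirichlet-divisor style induction on $d$, peeling off one coordinate at a time and comparing the region $\{\sum_j\varphi_j(k_j)\le R\}$ with the model region $\prod|t_j|\le r$. The paper instead invokes an abstract comparison lemma from \cite{KSU2} (reproduced as Lemma~\ref{combi}): if two sequences $(y_\ell),(z_\ell)$ satisfy $y_\ell/z_\ell\to 1$, then their level-set cardinalities $Y_d(\eps),Z_d(\eps)$ have ratio tending to $1$; this is applied with $z_\ell=(1+|\ell|^q)^{-1/q}$ and $y_\ell=(1+|\ell|)^{-1}$, then combined with the already-known asymptotics for the $q=1$ case (formula (4.23) of \cite{KSU2}). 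The paper also handles non-constant $\vec{q}$ by a separate sandwich between the two constant vectors generated by $\min_j q_j$ and $\max_j q_j$ (its Step~3), whereas your inductive scheme treats different $q_j$ per coordinate within a single pass. Both routes are sound; the paper's is softer and more economical because it reuses a proven black box and avoids the per-coordinate error bookkeeping, while yours is more self-contained. One word of caution on your bookkeeping: the claim that each coordinate contributes $2(e^{R'}+O(1))$ integers is only literally true for $q_j\ge 1$; for $q_j\in(0,1)$ the correction is of size $O(e^{(1-q_j)R'})$, i.e.\ $o(e^{R'})$ rather than $O(1)$, which is still absorbable but would need to be stated correctly when you carry out the induction.
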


\begin{proof}
{\em Step 1.} Let $\vec{q}$ be a constant vector generated by some $q\in (0,\infty]$. Let
$s_1 = \ldots = s_d =1$.
We shall  employ \cite[Thm.\ 4.3]{KSU2} together with \cite[Lem.\ 4.14]{KSU2}.
For convenience of the reader we first recall this lemma.
For $\ell\in\mathbb{Z}$, $0<\eps\le 1$ and $d\in\N$ let
$$
y_\ell:=\frac{1}{1+|\ell|}\quad,\quad\mathcal{Y}_d(\eps) :=\left\{k\in\Z:\: y_{k_1}\cdots y_{k_d}\ge \eps \right\}\quad,\quad
Y_d(\eps):= \# \mathcal{Y}_d(\eps)\,.
$$

\begin{lem}\label{combi} Let $(z_\ell)_{\ell\in\zz}$ be a sequence indexed by $\zz$ such that
$$
0<z_{\ell}\le z_0=1 \quad\text{for all } \ell\neq 0\qquad\text{and }\quad\lim_{|\ell|\to\infty}\frac{y_\ell}{z_\ell}=1\,.
$$
Similarly as for $(y_\ell)_{\ell\in\zz}$ we define
$\mathcal{Z}_d(\eps)$ and $Z_d(\eps)$ associated to $(z_\ell)_{\ell\in\zz}$\,.
Then we have
\be\label{Lem412}
\lim_{\eps\downarrow 0}\frac{Z_d(\eps)}{Y_d(\eps)}=1\,.
\ee
\end{lem}

\noindent
There are some simple consequences of  Lemma \ref{combi} which are of interest for us.
Taking logarithms  in \eqref{Lem412} yields
$$
    \lim\limits_{\varepsilon \downarrow 0} \Big(\ln Z_d(\eps) - \ln Y_d(\eps)\Big) = 0.
$$
Since $\lim\limits_{\eps \downarrow 0} Y_d(\eps) = \infty$\,, we get
\be\label{f80}
  \lim\limits_{\eps\downarrow 0}\frac{\ln Z_d(\eps)}{\ln Y_d(\eps)} = \lim\limits_{\eps \downarrow 0}\frac{\ln Z_d(\eps)-\ln Y_d(\eps)}{\ln Y_d(\eps)}+1 = 1\,.
\ee
Clearly,
$$
\frac{\eps\cdot Z_d(\eps)}{(\ln Z_d(\eps))^{d-1}} =
\frac{\eps\cdot Y_d(\eps)}{(\ln Y_d(\eps))^{d-1}}\cdot
\left(\frac{\ln Y_d(\eps)}{\ln Z_d(\eps)}\right)^{d-1}\cdot
\frac{Z_d(\eps)}{Y_d(\eps)}\,,
$$
and together with

\[
  \lim_{\eps\downarrow 0}\frac{\eps\cdot Y_d(\eps)}{(\ln Y_d(\eps))^{d-1}} =  \frac{2^d}{(d-1)!} \,,
\]
see \cite[formula (4.23)]{KSU2},
\eqref{Lem412} and \eqref{f80} this implies
\be\label{ws-20_b}
  \lim_{\eps\downarrow 0}\frac{\eps\cdot Z_d(\eps)}{(\ln Z_d(\eps))^{d-1}}
= \lim_{\eps\downarrow 0}\frac{\eps\cdot Y_d(\eps)}{(\ln Y_d(\eps))^{d-1}}
=\frac{2^d}{(d-1)!}\,.
\ee
We will use these arguments with $z_\ell := (1+|\ell|^q)^{-1/q}$ (usual modification if $q=\infty$).
Rephrasing \eqref{ws-20_b} in the language of the numbers $C(r,\vec{1}, \vec{q})$
we conclude

\begin{equation}\label{ws-19}
\lim_{r \to \infty} \frac{C(r,\vec{1}, \vec{q})}{r(\ln C(r,\vec{1}, \vec{q}))^{d-1}} =
\frac{2^d}{(d-1)!} \, .
\end{equation}
Next we employ Lemma \ref{an1}. Hence, \eqref{ws-19} yields
\[
\lim_{m\to\infty}  \frac{n_m\,
a_{n_m} (I_d: \, H^{\vec{1},\vec{q}}_\mix (\T)\to L_2 (\T))}{(\ln n_m)^{(d-1)}} =
\lim_{m \to \infty} \frac{C(\vartheta_m,\vec{1}, \vec{q})}{\vartheta_m\, (\ln C(\vartheta_m,\vec{1}, \vec{q}))^{d-1}} =
\frac{2^d}{(d-1)!} \, .
\]
To deal with the general case we shall use  simple monotonicity properties.
The function $C(r,\vec{1}, \vec{q})$ is increasing in $r$ and tends to $\infty$ if $r$ tends to $\infty$.
Hence, for $r \ge r_0$
the function $C(r,\vec{1}, \vec{q})/(\ln C(r,\vec{1}, \vec{q}))^{d-1}$ is increasing as well, at least
for $r$ sufficiently large.
Let $n_m < n \le n_{m+1}$. We obtain
\[
 \frac{n\,
a_{n} (I_d: \, H^{\vec{1},\vec{q}}_\mix (\T)\to L_2 (\T))}{(\ln n)^{(d-1)}}
\le
\frac{C(\vartheta_{m+1},\vec{1}, \vec{q})}{\vartheta_{m+1} (\ln C(\vartheta_{m+1},\vec{1}, \vec{q}))^{(d-1)}}
\]
and
\[
\frac{C(\vartheta_{m},\vec{1}, \vec{q})}{\vartheta_{m+1} (\ln C(\vartheta_{m},\vec{1}, \vec{q}))^{(d-1)}}
\le \frac{n\,
a_{n} (I_d: \, H^{\vec{1},\vec{q}}_\mix (\T)\to L_2 (\T))}{(\ln n)^{(d-1)}} \, .
\]
Obviously
\[
 \lim_{m\to \infty} \frac{\vartheta_m}{\vartheta_{m+1}} = 1\, .
\]
As a consequence we find
\[
\lim_{n\to\infty}  \frac{n\,
a_{n} (I_d: \, H^{\vec{1},\vec{q}}_\mix (\T)\to L_2 (\T))}{(\ln n)^{(d-1)}} =
\frac{2^d}{(d-1)!} \, .
\]
{\em Step 2.}  Let $\vec{q}$ be a constant vector generated by some $q\in (0,\infty]$.
Let $\vec{s}:= (s, \ldots \, , s)$ for some $s>0$.
Then the claim follows from Step 1 and \eqref{eq073}.
\\
{\em Step 3.}
Let $\vec{q}= (q_1, \ldots \, , q_d)$, $q_j \in (0, \infty]$, $j = 1, \ldots \, , d$.
Let $\vec{s}:= (s, \ldots \, , s)$ for some  $s>0$.
We define
\[
 \min_{j=1, \ldots \, , d} q_j = \gamma_0 \qquad \mbox{and}\qquad \max_{j=1, \ldots \, , d} q_j = \gamma_1
 \, .
\]
By $\vec{\gamma_0}$ and $\vec{\gamma_1}$
we denote the constant vectors generated by $\gamma_0 $ and $\gamma_1$, respectively.
Next we shall use the chain of inequalities
\[
\|\, f \, |H^{\vec{s},\vec{\gamma_1}}_\mix (\T)\| \le \| \, f \, | H^{\vec{s},\vec{q}}_\mix (\T)\|
\le \| \, f \, | H^{\vec{s},\vec{\gamma_0}}_\mix (\T)\|\, .
\]
By Step 1 and Step 2 we know the asymptotic behavior of
$a_{n} (I_d: \, H^{\vec{s},\vec{\gamma_i}}_\mix (\T)\to L_2 (\T))$, $i=1,2$.
From the multiplicativity of the approximation numbers, see also Lemma \ref{back},
we finally conclude \eqref{nochwas}.
\end{proof}

Our goal is to extend the previous result to the situation of non-constant smoothness vectors. The core
observation is given in the following lemma.

\begin{lem}\label{key}
 Let $a= (a_n)_{n=1}^\infty, b= (b_n)_{n=1}^\infty \in c_0$ be two sequences of positive real numbers with limit zero,
 and let $(c_n)_{n=1}^\infty $ denote the
non-increasing rearrangement of the tensor product sequence
$$
a\otimes b =(a_j\,b_k)_{j,k\in\N}\,.
$$
Then
$$
\lim_{j\to\infty}\frac{j^\beta\,a_j}{(\log j)^\alpha}= \lambda
$$
for some constants $\beta, \lambda>0$ and $\alpha \ge 0$ implies
$$
\lim_{n\to\infty}\frac{n^\beta\,c_n}{(\log n)^\alpha} = \lambda\,\Big( \sum_{k=1}^\infty b_k^{1/\beta} \Big)^\beta\,.
$$
Note that the statement remains true even if $b \notin \ell_{1/\beta}$ since then 
$\lim_{n\to\infty}\frac{n^\beta\,c_n}{(\log n)^\alpha} = \infty$.
\end{lem}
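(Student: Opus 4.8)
The plan is to reduce the statement to an asymptotic counting argument for the "sublevel sets" of the tensor product sequence, in the same spirit as the proof of Proposition \ref{lim2}. Write $a\otimes b = (a_j b_k)_{j,k}$ and, for $\eps>0$, set $N(\eps) := \#\{(j,k) : a_j b_k \ge \eps\}$. Since $c_n$ is the non-increasing rearrangement of $(a_jb_k)$, the function $n\mapsto c_n$ is essentially the generalized inverse of $\eps\mapsto N(\eps)$, so the conclusion $n^\beta c_n/(\log n)^\alpha \to L$ is equivalent (after taking logarithms to handle the $(\log n)^\alpha$ factor, exactly as in \eqref{f80}--\eqref{ws-20_b}) to
$$
\lim_{\eps\downarrow 0}\frac{\eps\, N(\eps)}{(\log N(\eps))^\alpha} \;=\; L^{1/\beta}, \qquad L := \lambda\Bigl(\textstyle\sum_k b_k^{1/\beta}\Bigr)^\beta .
$$
First I would record that the hypothesis $j^\beta a_j/(\log j)^\alpha \to \lambda$ is exactly the one-factor version of this, namely $\#\{j : a_j\ge\eta\} = A(\eta)$ satisfies $\eta\, A(\eta)^\beta/(\log A(\eta))^\alpha \to \lambda$, i.e. $\eta\mapsto A(\eta)$ grows like $(\lambda^{1/\beta}\eta^{-1/\beta}(\log(1/\eta))^{\alpha/\beta})$ up to lower-order corrections. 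The whole game is then to show that summing this over the second index multiplies the leading constant by $(\sum_k b_k^{1/\beta})^\beta$.

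The key step is the decomposition
$$
N(\eps) \;=\; \sum_{k=1}^\infty A\!\left(\frac{\eps}{b_k}\right),
$$
where only finitely many terms are nonzero for each fixed $\eps>0$ since $b\in c_0$. If $A(\eta)\approx c\,\eta^{-1/\beta}(\log(1/\eta))^{\alpha/\beta}$, then formally $A(\eps/b_k)\approx c\, b_k^{1/\beta}\eps^{-1/\beta}(\log(1/\eps))^{\alpha/\beta}$ for each fixed $k$, and summing over $k$ produces the factor $\sum_k b_k^{1/\beta}$; after raising $N(\eps)$ to the appropriate power and dividing by $(\log N(\eps))^\alpha$ one gets $L^{1/\beta}$. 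To make this rigorous I would fix a large integer $K$, split the sum into $k\le K$ and $k>K$, use the sharp asymptotics of $A$ for the first (finite) part, and for the tail $k>K$ use a crude upper bound $A(\eps/b_k)\le C\,(\eps/b_k)^{-1/\beta}(\log(1/\eps))^{\alpha/\beta}$ valid uniformly once $\eps$ is small (monotonicity of $A$ plus the established asymptotics give such a bound). Taking $\eps\downarrow 0$ first, then $K\to\infty$, the main term converges to $c^\beta\eps^{-1}(\log(1/\eps))^\alpha(\sum_{k\le K}b_k^{1/\beta})^\beta$ and the tail contributes at most $\eps^{-1}(\log(1/\eps))^\alpha$ times $(\sum_{k>K}b_k^{1/\beta})^\beta$, which vanishes if $b\in\ell_{1/\beta}$ and forces $N(\eps)=\infty$-growth (hence $c_n\cdot n^\beta/(\log n)^\alpha\to\infty$) otherwise. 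I would also need the elementary fact that $\log N(\eps)/\log A(\eps)\to 1$, which follows because $N(\eps)\ge A(\eps)$ and $N(\eps)\le (\#\{k:b_k\ge\sqrt\eps\})\cdot A(\sqrt\eps)$, say, so $\log N(\eps)$ and $\log A(\eps)$ are asymptotically comparable; this is the analogue of \eqref{f80} and lets the $(\log\cdot)^\alpha$ factors be handled cleanly.

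The main obstacle is the tail estimate for $k>K$: one must control $\sum_{k>K}A(\eps/b_k)$ uniformly in $\eps$, which requires an upper bound on $A(\eta)$ of the form $A(\eta)\le C_\delta\,\eta^{-1/\beta-\delta}$ or, better, $A(\eta)\le C\,\eta^{-1/\beta}(\log(1/\eta))^{\alpha/\beta}$ that is valid for \emph{all} small $\eta$ and not merely asymptotically, and then summability of $(b_k^{1/\beta})$ (or the extra logarithmic slack from $(\log(1/\eta))^{\alpha/\beta}$ when $\alpha>0$) to make the tail sum finite and small. The case $\alpha=0$ with $b$ exactly on the boundary of $\ell_{1/\beta}$ is the delicate one and is precisely where the "$b\notin\ell_{1/\beta}\Rightarrow\infty$" alternative gets used: one shows the lower bound $N(\eps)\ge \eps^{-1/\beta}\sum_{k\le K}b_k^{1/\beta}\cdot(1+o(1))$ with $K=K(\eps)\to\infty$ chosen slowly, so that $\sum_{k\le K}b_k^{1/\beta}\to\infty$, giving $\eps N(\eps)^\beta/(\log N(\eps))^\alpha\to\infty$ and hence $n^\beta c_n/(\log n)^\alpha\to\infty$. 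Once these counting asymptotics are in place, translating back to approximation numbers via Lemma \ref{an1}-type arguments (interpolating between consecutive jump points $\vartheta_m$, as at the end of the proof of Proposition \ref{lim2}) is routine.
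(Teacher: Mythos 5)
Your plan follows essentially the same route as the paper's proof: reduce the limit statement to a counting problem, use the decomposition $M(\eps)=\sum_k N(\eps/b_k)$ of the sublevel count (the paper writes $N$ for the one-factor count and $M$ for the tensor-product count; you write $A$ and $N$), prove matching lower and upper bounds for the count by splitting the sum, and translate back to the rearrangement via the standard inversion plus the $\log$-comparison of \eqref{f80}. Two technical slips are worth flagging, both of which the paper's structure is designed to avoid. First, the paper reduces to $\beta=1$ at the outset (Step~1: replace $a_j,b_k,c_n$ by their $1/\beta$-th powers), and this is not merely cosmetic. Your claimed equivalence ``$n^\beta c_n/(\log n)^\alpha\to L$ iff $\eps\, N(\eps)/(\log N(\eps))^\alpha\to L^{1/\beta}$'' is incorrect for $\beta\neq 1$; the correct inversion reads $\eps\, N(\eps)^\beta/(\log N(\eps))^\alpha\to L$, and if you insist on working with general $\beta$ you must track $\beta$-dependent powers of $\eps$ and of the logarithm through every step of the argument. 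Second, the uniform tail bound $A(\eta)\le C\eta^{-1/\beta}(\log(1/\eta))^{\alpha/\beta}$ you want cannot hold for all small $\eta$ when $\alpha>0$: as $\eta\to 1^-$ the right-hand side tends to $0$ while $A(1)\ge 1$, and for the largest $k$ with $b_k>\eps$ the argument $\eta=\eps/b_k$ is arbitrarily close to $1$. The repair is easy---use the genuinely uniform bound $A(\eta)\le C\eta^{-1/\beta}$; divided by the leading term $\eps^{-1/\beta}(\log(1/\eps))^{\alpha/\beta}\sum_k b_k^{1/\beta}$ the tail then contributes $(\log(1/\eps))^{-\alpha/\beta}\sum_{k>K}b_k^{1/\beta}$, which vanishes as $\eps\downarrow 0$ already for fixed $K$ when $\alpha>0$ and needs $K\to\infty$ only for $\alpha=0$. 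The paper's Substep~2.3 resolves the same near-boundary difficulty differently: it cuts at a threshold $\eps(\delta)$ for the argument of $N$, bounds the remaining terms by a constant times $B(\eps)=\#\{k:b_k\ge\eps\}$, and invokes the auxiliary fact $\eps\, B(\eps)\to 0$ proved in Substep~2.2 (absent from your sketch). With your fixed-$K$ split the auxiliary lemma is unnecessary, so the two repairs are interchangeable. Your treatment of the lower bound and the $b\notin\ell_{1/\beta}$ alternative coincides with the paper's Substep~2.1.
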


\begin{proof}
{\em Step 1.} Reduction to the case $\beta = 1$.
It is enough to prove the claim for $\beta=1$. 
Indeed, if we define $\widetilde{a}_j := a_j^{1/\beta}$ and $\widetilde{b}_k := b_k^{1/\beta}$,
then the assumption on $(a_j)_j$ gives by taking power $1/ \beta$
\[
\lim_{j\to\infty}\frac{j\,\widetilde{a}_j}{(\log j)^{\alpha/\beta}}= \lambda^{1/\beta}\, .
\]
Now the case $\beta=1$ implies for the non-increasing rearrangement $(\widetilde{c}_n)_{n=1}^\infty$ of 
\[
 \widetilde{a} \otimes \widetilde{b} = (\widetilde{a}_j\, \widetilde{b}_k)_{j,k \in \N} = ({a}_j^{1/\beta}\, {b}_k^{1/\beta})_{j,k \in \N}
\]
that
\[
 \lim_{n\to\infty}\frac{n\,\widetilde{c}_n}{(\log n)^{\alpha/\beta}} = \lambda^{1/\beta}\,\sum_{k=1}^\infty \widetilde{b_k} \,.
\]
Since $\widetilde{c}_n = c_n^{1/\beta}$ and $\widetilde{b}_k = b_k^{1/\beta}$ this is equivalent (by taking power $\beta$) to 
\[
 \lim_{n\to\infty}\frac{n^\beta\,c_n}{(\log n)^\alpha} = \lambda\,\Big( \sum_{k=1}^\infty b_k^{1/\beta} \Big)^\beta\,.
\]
{\em Step 2.} The case $\beta =1$.
Since the set of products $a_j \, \cdot \, b_k$ does not depend on the ordering of the sequences
we may assume that both sequences are ordered, i.e.,
\[
a_1 \ge a_2 \ge \ldots \, \ge a_j \ge \ldots \qquad \mbox{and}\qquad
b_1 \ge b_2 \ge \ldots \, \ge b_k \ge \ldots \, .
\]
By homogeneity of the claimed  assertion we may also assume that $a_1=b_1=1$.
In what follows we use the notation $x_n \asymp y_n $ if $\lim_{n\to \infty} x_n/y_n = 1$.
We begin with a simple observation. Let
$$
N(\eps):=\#\{j\in\N: a_j\ge \eps\}\quad,\quad \eps >0\,.
$$
Then, for arbitrary $C>0$ and $\alpha \ge 0$, it is easy to verify that
$$
a_j\asymp\frac Cj (\log j)^\alpha \quad\text{as } j\to\infty\quad\Longleftrightarrow\quad
N(\eps)\asymp\frac C\eps(\log \frac 1\eps)^\alpha \quad\text{as } \eps\to 0\,.
$$
By our assumption and this observation we have
\[
N(\eps)\asymp \frac{\lambda}{\eps}\Big(\log \frac 1\eps\Big)^\alpha=:\varphi(\eps)\quad\text{as } \eps\to 0\,.
\]
We can apply this observation also to the quantities
\[
M(\eps):=\#\{n\in\N: c_n\ge \eps\}=\#\{(j,k)\in\N^2: a_j\,b_k\ge \eps\}\,,
\]
and therefore it is enough to show that
\[
\lim_{\eps\to 0}\frac{M(\eps)}{\varphi(\eps)}=\Vert b\Vert_1\,.
\]
\\
{\em Substep 2.1.} Lower estimate. Clearly we have
\be\label{Meps1}
M(\eps)=\sum_{k=1}^\infty \#\Big\{j\in\N: a_j\ge \frac{\eps}{b_k}\Big\}=\sum_{k\in\N:~ b_k\ge \eps}N(\eps/b_k)\,,
\ee
since $N(\eps/b_k)=0$ if $\eps/b_k>a_1=1$.
Fix now $m\in\N$. Then
$$
\liminf_{\eps\to 0}\frac{M(\eps)}{N(\eps)}\ge
\lim_{\eps\to 0}\sum_{k=1}^m\frac{N(\eps/b_k)}{N(\eps)}=\sum_{k=1}^m b_k\,,
$$
where we have used that
$$
\lim_{\eps\to 0}\frac{N(\lambda \eps)}{N(\eps)}=
\lim_{\eps\to 0}\frac{\varphi(\lambda \eps)}{\varphi(\eps)}=
\frac 1\lambda\quad\text{for every }\lambda >0\,.
$$
Letting $m\to\infty$ implies
$$
\liminf_{\eps\to 0}\frac{M(\eps)}{\varphi(\eps)}=\liminf_{\eps\to 0}\frac{M(\eps)}{N(\eps)}\ge\Vert b\Vert_1\,.
$$
\\
{\em Substep 2.2.} A preparation. We claim that the cardinalities
\[
 B(\varepsilon) := \# \{k \in \N:~ b_k \ge \varepsilon\}\, , \qquad 0 < \varepsilon \le 1\, , 
\]
satisfy
\[
 \lim_{\varepsilon \to 0}\, \varepsilon \, B(\varepsilon) = 0\, .
\]
For convenience of the reader we give a proof. 
Because of the monotonicity of the $b_k$ we have for  all $n \in \N$  
\[
 n\, b_{2n}\le \sum_{k=n+1}^{2n} b_k \le \sum_{k=n+1}^\infty  b_k \, \xrightarrow[n \to \infty]{}~ 0\, , 
\]
whence $\lim_{n \to \infty}\, n \, b_n = 0$, i.e., for all $\delta \ge 0$ $\exists n_\delta \in \N$ such that for all 
$n\ge n_\delta $ it holds $n\, b_n \le \delta$.
Let now $\delta>0$ and $0< \varepsilon \le 1$ be fixed  and $b_n \ge \varepsilon$.
Then either $n \le n_\delta $ of $n>n_\delta$ and $n \, \varepsilon \le n\, b_n \le \delta$.
This implies 
\[
 B(\varepsilon)\le \max\Big(n_\delta, \frac{\delta}{\varepsilon}\Big)
\]
and consequently 
\[
 \limsup_{\varepsilon \to 0}\, \varepsilon \, B(\varepsilon) \le \delta\, .
\]
Since this is true for every $\delta >0$, we obtain $\lim_{\varepsilon \to 0}\, \varepsilon \, B(\varepsilon) = 0$.
\\
{\em Substep 2.3} Upper estimate.
Fix any $\delta >0$ and select $\eps(\delta)>0$ such that
\be\label{Neps}
N(\eps)\le (1+\delta)\varphi(\eps)
\quad\text{for all}\quad 0<\eps\le\eps(\delta)\,.
\ee
We shall estimate $M(\eps)$ via formula (\ref{Meps1}). From  $b_k\ge \eps$ and
 $k\,b_k\le\Vert b\Vert_1$ we conclude that the number of summands in (\ref{Meps1}) is equal to  $B(\varepsilon)$.
If $\eps/b_k >\eps(\delta)$, we have
\[
N(\eps/b_k)\le N(\eps(\delta))\,,
\]
and if $\eps/b_k \le\eps(\delta)$, we estimate
\[
N(\eps/b_k)\le (1+\delta)\frac{\lambda\,  b_k}{\eps}\Big(\log \frac{b_k}{\eps}\Big)^\alpha
\le (1+\delta)\frac{\lambda\,  b_k}{\eps}\Big(\log \frac{1}{\eps}\Big)^\alpha \,,
\]
where we used the monotonicity of the $b_n$ and $b_1 = 1$.
Altogether this implies
\beqq
\frac{M(\eps)}{\varphi(\eps)} & \le &  \frac{B(\varepsilon)\cdot  N(\eps(\delta))}{\varphi(\eps)}+ (1+\delta)\, \Vert b\Vert_1\,
\le \varepsilon \, B(\varepsilon) \, \frac{N(\eps(\delta))}{\lambda\,(\log \frac 1\eps)^\alpha}+(1+\delta)\, \Vert b\Vert_1
\\
& \xrightarrow[\varepsilon \to 0]{}& (1+\delta)\Vert b\Vert_1\, , 
\eeqq
where we used Substep 2.2.
Hence
\[
\limsup_{\eps\to 0}\frac{M(\eps)}{\varphi(\eps)}\le (1+\delta)\Vert b\Vert_1\,.
\]
Since this is true for all $\delta >0$, the proof is finished.
\end{proof}

Combining Proposition \ref{lim2} and Lemma \ref{key} we arrive at the first main result.

\begin{satz}\label{lim1}
 Let $d\ge 2$ and $\vec{q}:= (q_1, \ldots \, , q_d)$, $q_j \in (0,\infty]$, $j=1, \ldots\, , d$.
Let $\vec{s}$ be given by
\[
0 < s_1 = s_2 = \ldots = s_\nu <  s_{\nu +1} \le  \ldots \le s_d< \infty 
\]
for some $\nu$, $1 \le  \nu < d$.
Then
\be\label{ws13}
 \lim_{n\to\infty}  \frac{n^{s_1}\,a_n (I_d: \, H^{\vec{s},\vec{q}}_\mix (\T)\to L_2 (\T))}{(\ln n)^{(\nu-1)s_1}}
= \Big[\frac{2^\nu}{(\nu - 1)!}\, \prod_{j= \nu+1}^d B_j\Big]^{s_1}\, ,
\ee
where
\[
B_j := 1+ 2 \sum_{m=1}^\infty (1+ m^{q_j})^{-\frac{s_j}{s_1\, q_j}}\, , \qquad j=\nu + 1 \, , \ldots \, d\, .
\]
\end{satz}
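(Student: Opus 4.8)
The plan is to combine Proposition~\ref{lim2} with Lemma~\ref{key}, treating the less important variables $x_{\nu+1},\dots,x_d$ one at a time as tensor factors. First I would observe that, by Lemma~\ref{tensor}, the embedding $I_d$ factors as a tensor product of univariate embeddings, so that the multi-indexed weight sequence $(1/u_{\vec{s},\vec{q}}(k))_{k\in\Z}$ is the tensor product of the $d$ univariate sequences $\bigl((1+|k_j|^{q_j})^{-s_j/q_j}\bigr)_{k_j\in\zz}$. Grouping the first $\nu$ coordinates together, write $a=(a_m)_{m\in\N}$ for the non-increasing rearrangement of the $\nu$-fold tensor product $\bigl((1+|k_1|^{q_1})^{-s_1/q_1}\bigr)\otimes\cdots\otimes\bigl((1+|k_\nu|^{q_\nu})^{-s_1/q_1}\bigr)$, i.e.\ the singular numbers of $I_\nu:H^{\vec{s}_1,(q_1,\dots,q_\nu)}_{\mix}(\tor^\nu)\to L_2(\tor^\nu)$ with the constant smoothness $s_1$. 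By Proposition~\ref{lim2} (Step~3 of its proof already allows a non-constant $\vec{q}$), we have
\[
\lim_{m\to\infty}\frac{m^{s_1}a_m}{(\ln m)^{(\nu-1)s_1}}=\Big[\frac{2^\nu}{(\nu-1)!}\Big]^{s_1},
\]
which is exactly the hypothesis of Lemma~\ref{key} with $\beta=s_1$, $\alpha=(\nu-1)s_1$ and $\lambda=[2^\nu/(\nu-1)!]^{s_1}$.

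Next I would peel off the coordinate $j=d$: let $b=(b_k)_{k\in\N}$ be the non-increasing rearrangement of $\bigl((1+|\ell|^{q_d})^{-s_d/q_d}\bigr)_{\ell\in\zz}$, so that the full sequence of singular numbers of $I_d$ (restricted to smoothness pattern $(s_1,\dots,s_1,s_d)$ in the coordinates $1,\dots,\nu,d$) is the non-increasing rearrangement of $a\otimes b$. Lemma~\ref{key} then yields
\[
\lim_{n\to\infty}\frac{n^{s_1}c_n}{(\ln n)^{(\nu-1)s_1}}=\lambda\Big(\sum_{k=1}^\infty b_k^{1/s_1}\Big)^{s_1},
\]
and since $\sum_k b_k^{1/s_1}=\sum_{\ell\in\zz}(1+|\ell|^{q_d})^{-s_d/(s_1 q_d)}=1+2\sum_{m=1}^\infty(1+m^{q_d})^{-s_d/(s_1 q_d)}=B_d$, this reproduces the stated formula with the single factor $B_d$. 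The summability condition $b\in\ell_{1/s_1}$, needed for Lemma~\ref{key} to give a finite limit, holds precisely because $s_d>s_1$ forces $s_d/(s_1 q_d)\cdot q_d=s_d/s_1>1$, so the series converges; I would note this explicitly. One then induces on the number of ``light'' coordinates: having shown the result for the pattern $(s_1,\dots,s_1,s_{\nu+1},\dots,s_{j-1})$ with limit constant $[\frac{2^\nu}{(\nu-1)!}\prod_{i=\nu+1}^{j-1}B_i]^{s_1}$, one tensors with the $j$-th univariate sequence and applies Lemma~\ref{key} once more; the polynomial-times-log form of the limit is preserved at each step (the exponents $\beta=s_1$ and $\alpha=(\nu-1)s_1$ never change), and each step multiplies the constant by $B_j$. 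After $d-\nu$ steps we obtain \eqref{ws13}.

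The only genuine subtlety — and the step I would treat most carefully — is verifying that the hypotheses of Lemma~\ref{key} remain valid along the induction, namely that at each stage the ``already-merged'' sequence $a$ still satisfies a clean asymptotic of the form $m^{s_1}a_m/(\ln m)^{(\nu-1)s_1}\to\text{const}$ (so that it can legitimately play the role of the sequence $a$ in the lemma), and that the freshly appended univariate sequence $b$ lies in $\ell_{1/s_1}$. The first point is automatic since Lemma~\ref{key}'s conclusion has exactly the shape of its hypothesis with the same $\beta,\alpha$; the second is where the assumption $s_j>s_1$ for all $j>\nu$ is used, uniformly. I would also remark that if one instead has $s_{\nu+1}=s_1$ the corresponding series diverges and the constant (correctly) blows up, which is consistent with the borderline case being governed by Proposition~\ref{lim2} with a larger value of $\nu$; but under the strict inequality in the hypothesis this does not occur. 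A final minor point: to pass from the rearrangement identity for the weight sequence to the statement about approximation numbers $a_n(I_d)$ one invokes \eqref{eq072}, i.e.\ $a_n(I_d)=\tau_n$, which is already established in the preliminaries.
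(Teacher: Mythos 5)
Your proof is correct and takes essentially the same route as the paper: apply Proposition~\ref{lim2} to the $\nu$-fold tensor block with constant smoothness $s_1$, then repeatedly apply Lemma~\ref{key} to append each light coordinate $j=\nu+1,\dots,d$, observing that the conclusion of Lemma~\ref{key} has the same form as its hypothesis (with the same $\beta=s_1$, $\alpha=(\nu-1)s_1$), so the induction closes and each step multiplies the constant by $B_j$. Your explicit verification that $b\in\ell_{1/s_1}$, using $s_j/s_1>1$, is a welcome detail that the paper only addresses implicitly via a later remark.
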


\begin{proof}
We shall proceed by induction. Therefore we fix $\nu \in \N$ and the induction runs with respect to $d$.
First, we investigate the case $d= \nu + 1$.
In this situation we choose
\[
a_j := a_j (I_d: ~ H^{(s_1, \ldots \, s_1),\vec{q}}_\mix (\tor^\nu) \to L_2 (\tor^\nu))\, , \qquad j \in \N\, .
\]
Proposition \ref{lim2} yields
\[
 \lim_{j \to \infty} \, \frac{j^{s_1}\,a_{j} (I_d: \, H^{(s_1, \ldots \, s_1),\vec{q}}_\mix (\tor^\nu) \to L_2 (\tor^\nu))}{(\ln j)^{(\nu-1)s_1} \, } =
 \Big[\frac{2^{\nu}}{(\nu-1)!}\Big]^{s_1} \, .
\]
Furthermore we know that
$(a_j)_j $ coincides with the non-increasing rearrangement of the sequence
\[
\prod_{\ell=1}^\nu (1+|k_\ell|^{q_\ell})^{-s_1/q_\ell}\, , \qquad k \in \zz^\nu \, .
\]
We choose
\[
b_k := (1+|k|^{q_{\nu +1}})^{-s_{\nu + 1}/q_{\nu +1}}\, , \qquad k\in \zz\, .
\]
The tensor product sequence  is given by
\[
 \prod_{\ell=1}^{\nu + 1} (1+|k_\ell|^{q_\ell})^{-s_\ell/q_\ell}\, , \qquad k \in \zz^{\nu+1} \, .
\]
The non-increasing rearrangement of this sequence, denoted by $c_n$, coincides with the approximation numbers of the identity operator with respect to
$(H^{\vec{s},\vec{q}}_\mix (\tor^{\nu+1}) \to L_2 (\tor^{\nu+1}))$. Hence,
Lemma \ref{key} with $\beta = s_1$, $\alpha =(\nu-1)s_1$ and $\lambda = \Big[\frac{2^{\nu}}{(\nu-1)!}\Big]^{s_1}$ implies
\be\label{wn-11}
 \lim_{n \to \infty} \frac{n^{s_1} \, \cdot \, c_n}{(\log n)^{(\nu-1)s_1}} = 
 \Big(1+ 2 \sum_{k=1}^\infty (1+|k|^{q_{\nu +1}})^{-\frac{s_{\nu + 1}}{s_1 q_{\nu +1}}} \Big)^{s_1} \, \Big[\frac{2^{\nu}}{(\nu-1)!}\Big]^{s_1} \, .
\ee
This coincides with \eqref{ws13} in our special situation.
The formula \eqref{wn-11} will serve as the initial step of our induction.
However, in a completely similar way we may prove the step $\nu + m \to \nu + m+1$ for arbitrary $m \in \N$.
The proof is finished.
\end{proof}

\begin{rem}
 \rm
(i) Clearly, all $B_j$ are finite.\\
(ii) The fine-index $\vec{q}$ and the ``jumps'' $s_j/s_\nu$, $j=\nu+1,...,d$, influence the asymptotic
behavior (in contrast to the classical mixed case with constant smoothness vector).
\\
(iii) If $s_{\nu +1} \downarrow s_\nu$ then $B_{\nu + 1}$ tends to infinity.
This reflects the following.
In case
\[
0 < s_1 = s_2 = \ldots = s_\nu =  s_{\nu +1} < s_{\nu+2} \le  \ldots \le s_d< \infty
\]
we know
\[
 \lim_{n\to\infty}  \frac{n^{s_1}\,a_n (I_d: \, H^{\vec{s},\vec{q}}_\mix (\T)\to L_2 (\T))}{(\ln n)^{\nu s_1}}
= \Big[\frac{2^{\nu+1}}{\nu!}\, \prod_{j= \nu+2}^d B_j\Big]^{s_1}\, ,
\]
see \eqref{ws13}. Compared to \eqref{ws13} the power of the logarithm has changed from $(\nu-1) s_1$
to $\nu s_1$. Hence, for $s_{\nu +1} \downarrow s_\nu$ the right-hand side in \eqref{ws13} must approach
infinity.
\\
(iv)
If $s_{\nu+1} \to \infty$, then $B_j \to 1$ for all $j\ge \nu + 1$ follows,
i.e., we are back in the $\nu$-dimensional case.
The approximation numbers $a_n$, in some sense, do not see the variables $x_{\nu + 1}, \ldots \, , x_d$,
if $n$ tends to infinity.
\\
(v) There is a general estimate from above for the asymptotic constants
$$
   \le  \Big[\frac{2^{\nu}}{(\nu-1)!}\Big]^{s_1}\,
  \Big[\prod_{j=\nu + 1}^d (1+ 2\,  \zeta ({s_j /s_1}))\Big]^{s_1}\,,
$$
where $\zeta$ denotes Riemann's zeta function, see \eqref{Riemann} below.
This implies that the constant $C_{\vec{s},\vec{q}}(d,\nu)$ in \eqref{ws-06}
decays super-exponentially in $\nu$ if $n$ is chosen sufficiently large.
This observation can be compared to similar results in  Bungartz, Griebel \cite{BG04},
Griebel \cite{Gr},  Schwab et al.\ \cite{SST08}, Dinh D{\~u}ng, Ullrich \cite{DiUl13}, Chernov, Dinh D{\~u}ng \cite{CD},
Krieg \cite{DK} and \cite{KSU2}, where all these references are dealing with the case $\nu=d$.
\end{rem}

In a similar way we can deal with the approximation  numbers of the anisotropic mixed Sobolev spaces
$a_n (I_d: \, H^{\vec{m}}_\mix (\T)\to L_2 (\T))$, $\vec{m}\in \N^d$, see \eqref{t-1}.

\begin{satz}\label{sobolev1}
 Let $d\ge 2$ and $1\le \nu < d$.
Let $\vec{m}\in \N^d$ be given by
\[
1 \le  m_1 = m_2 = \ldots = m_\nu <  m_{\nu +1} \le  \ldots \le m_d \,.
\]
Then
\beqq
 \lim_{n\to\infty}\, && \hspace{-0.5cm}  \frac{n^{m_1}\,a_n (I_d: \, H^{\vec{m}}_\mix (\T)\to L_2 (\T))}{(\ln n)^{(\nu-1)m_1}}
\\
& = & 
\bigg[\frac{2^\nu}{(\nu - 1)!}\, \prod_{j= \nu+1}^d \Big( 1 + 2\, \sum_{\ell=1}^\infty\, \frac{1}{(1 + \ell^2  + \ldots + \ell^{2m_j})^{1/(2m_1)}}
\Big)\bigg]^{m_1}\, .
\eeqq
\end{satz}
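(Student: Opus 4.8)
The plan is to repeat, with the genuine Sobolev weight $w_{\vec m}(k)=\prod_{j=1}^{d}\omega_{m_j}(k_j)$ of \eqref{neu2} in place of the power weights $u_{\vec s,\vec q}$, the arguments that proved Proposition \ref{lim2} and Theorem \ref{lim1}. First one notes that $\big(w_{\vec m}(k)^{-1}(2\pi)^{-d/2}e^{ik\cdot x}\big)_{k\in\Z}$ is an orthonormal basis of $H^{\vec m}_\mix(\T)$ while $\big((2\pi)^{-d/2}e^{ik\cdot x}\big)_{k\in\Z}$ is one of $L_2(\T)$, so $I_d$ factors through the diagonal operator on $\ell_2(\Z)$ with entries $\big(w_{\vec m}(k)^{-1}\big)_{k\in\Z}$ exactly as in Subsection \ref{sing_diag}. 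Hence $a_n(I_d:H^{\vec m}_\mix(\T)\to L_2(\T))$ is the non-increasing rearrangement of $\big(w_{\vec m}(k)^{-1}\big)_{k\in\Z}$, and Lemma \ref{an1} applies verbatim with the counting function $r\mapsto\#\{k\in\Z:w_{\vec m}(k)\le r\}$.

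First I would establish the analogue of Proposition \ref{lim2} for these weights: for any $D,m\in\N$,
\[
\lim_{n\to\infty}\frac{n^{m}\,a_n(I_D:H^{m}_\mix(\tor^D)\to L_2(\tor^D))}{(\ln n)^{(D-1)m}}=\Big[\frac{2^D}{(D-1)!}\Big]^{m}.
\]
Its proof is Step 1 of the proof of Proposition \ref{lim2}, with $z_\ell=(1+|\ell|^q)^{-1/q}$ replaced by $z_\ell:=\omega_{m}(\ell)^{-1/m}$, $\ell\in\zz$. Since $\omega_{m}(\ell)^2=1+\ell^2+\ldots+\ell^{2m}$ is a monic polynomial of degree $2m$ in $|\ell|$ with $\omega_m(0)=1$, one has $0<z_\ell\le z_0=1$ and $\lim_{|\ell|\to\infty}(1+|\ell|)^{-1}/z_\ell=\lim_{|\ell|\to\infty}\omega_m(\ell)^{1/m}/(1+|\ell|)=1$, so $(z_\ell)$ satisfies the hypotheses of Lemma \ref{combi}. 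Therefore the computation leading from Lemma \ref{combi} to \eqref{ws-20_b} gives $\lim_{\eps\downarrow0}\eps\,Z_D(\eps)/(\ln Z_D(\eps))^{D-1}=2^D/(D-1)!$, where $Z_D(\eps)=\#\{k\in\zz^D:\prod_j z_{k_j}\ge\eps\}=\#\{k:\prod_j\omega_m(k_j)\le\eps^{-m}\}$. Rephrasing this for the counting function and inserting it into the monotonicity argument of Proposition \ref{lim2} (Lemma \ref{an1}, together with $\vartheta_{i}/\vartheta_{i+1}\to1$, which holds because $\omega_m(\ell+1)/\omega_m(\ell)\to1$) yields $\lim_n n\,a_n^{1/m}/(\ln n)^{D-1}=2^D/(D-1)!$, and raising to the power $m$ gives the displayed identity.

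Then I would prove the theorem by induction on $d$ with $\nu$ fixed, exactly as in the proof of Theorem \ref{lim1}. In the base case $d=\nu+1$, take for $(a_j)$ the non-increasing rearrangement of $\prod_{\ell=1}^{\nu}\omega_{m_1}(k_\ell)^{-1}$, whose asymptotics are the displayed identity with $D=\nu$, $m=m_1$, and put $b_k:=\omega_{m_{\nu+1}}(k)^{-1}$, $k\in\zz$. The rearrangement $(c_n)$ of $a\otimes b$ equals $a_n(I_{\nu+1}:H^{\vec m}_\mix(\tor^{\nu+1})\to L_2(\tor^{\nu+1}))$, so Lemma \ref{key} with $\beta=m_1$, $\alpha=(\nu-1)m_1$ and $\lambda=[2^\nu/(\nu-1)!]^{m_1}$ gives precisely the asserted formula, because $\sum_{k\in\zz}b_k^{1/m_1}=1+2\sum_{\ell=1}^{\infty}(1+\ell^2+\ldots+\ell^{2m_{\nu+1}})^{-1/(2m_1)}$. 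The step $d\to d+1$ is identical, tensoring with $b_k:=\omega_{m_{d+1}}(k)^{-1}$; all occurring series converge since $m_j>m_1$ for $j>\nu$, so $\omega_{m_j}(\ell)^{-1/m_1}\sim\ell^{-m_j/m_1}$ with exponent $m_j/m_1>1$.

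The only genuinely nonroutine point is the verification that $\big(\omega_{m}(\ell)^{-1/m}\big)_{\ell\in\zz}$ fits the framework of Lemma \ref{combi} and that the ensuing passage from the counting asymptotics to the asymptotics of $a_n$ survives unchanged; both reduce to the elementary fact $\omega_m(\ell)^{1/m}/(1+|\ell|)\to1$, after which the estimates used for Proposition \ref{lim2} and Theorem \ref{lim1} transfer word for word.
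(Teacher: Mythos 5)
Your proposal is correct and follows essentially the same route as the paper: establish the constant-exponent asymptotics for the $\omega_m$-weight, then induct via Lemma \ref{key} with $b_k=\omega_{m_{\nu+1}}(k)^{-1}$ and $\beta=m_1$, $\alpha=(\nu-1)m_1$. The only difference is that the paper simply cites Corollary 4.21 of \cite{KSU2} for the base case, whereas you re-derive it by verifying that $z_\ell=\omega_m(\ell)^{-1/m}$ satisfies the hypotheses of Lemma \ref{combi}; this verification is correct and makes the argument self-contained.
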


\begin{proof}
Our main tools are Lemma \ref{key} and Corollary 4.21 in \cite{KSU2}. 
The quoted corollary yields
 \be\label{neu1b}
 \lim_{n \to \infty} \, \frac{n^{m_1}\,a_{n} (I_\nu: \, H^{(m_1, \ldots \, ,m_1)}_\mix (\tor^\nu) \to L_2 (\tor^\nu))}{(\ln n)^{(\nu-1)m_1} \, } =
 \Big[\frac{2^{\nu}}{(\nu-1)!}\Big]^{m_1} \, .
 \ee
Temporarily we assume  $d = \nu + 1$. From \eqref{eq072}
we conclude that \\
$a_n (I_d: \, H^{\vec{m}}_\mix (\T)\to L_2(\T))$
equals the $n$-th element in the non-increasing rearrangement of the tensor product 
sequence
\[
 \Big(\prod_{j=1}^\nu \omega_{1}(k_j)\Big)^{-1}\, (\omega_{m_d}(k_d))^{-1} \, ,  \qquad k \in \zz^d\, , 
\]
see \eqref{neu2}. Let $(a_n)_{n=1}^\infty$ denote the
non-increasing rearrangement of the sequence
$\Big(\prod_{j=1}^\nu \omega_{1}(k_j)\Big)^{-1}$, $(k_1, \ldots \, , k_\nu) \in \zz^\nu$. In addition, let $(b_n)_{n=1}^\infty$ be the
non-increasing rearrangement of the sequence $(\omega_{m_d}(k))^{-1}$, $k \in \zz$. 
By $(c_n)_{n=1}^\infty$ we denote the non-increasing rearrangement of the tensor product sequence. 
Then \eqref{neu1b} yields
\[
 \lambda := \lim_{n \to \infty} \, \frac{n^{m_1}\,
 a_{n}}{(\ln n)^{(\nu-1)m_1}} =
 \Big[\frac{2^{\nu}}{(\nu-1)!}\Big]^{m_1} \, .
 \]
An application of Lemma \ref{key} with this positive number $\lambda$, $\beta:= m_1$ and $\alpha := (\nu-1)m_1$ leads to 
\beqq
 \lim_{n \to \infty} \, \frac{n^{m_1} \, c_n}{(\log n)^{(\nu-1)m_1}} & = & \lambda\, 
\Big(\sum_{n=1}^\infty b_n^{1/m_1}\Big)^{m_1} = \lambda \, \Big(\sum_{\ell\in \zz}\, \omega_{m_d}(\ell)^{-1/m_1}\Big)^{m_1}
\\
& = &  \lambda\, \Big( 1 + 2\, \sum_{\ell=1}^\infty\, \frac{1}{(1 + \ell^2  + \ldots + \ell^{2m_d})^{1/(2m_1)}}\Big)^{m_1}\, .
\eeqq
This proves the claim in our special case   $d = \nu +1$.
The induction step is using the same type of arguments, we will not repeat this.
\end{proof}

\begin{rem}\rm
We add a comment.
Observe that the sequence of approximation numbers 
$a_n (I_d: \, H^{\vec{m}}_\mix (\T)\to L_2 (\T))$
does not have the property 
\[
    a_n(I_d:H^{\lambda \vec{m}}_{\mix}(\T)\to L_2 (\T)) = a_n(I_d:H^{\vec{m}}_{\mix}(\T)\to L_2(\T))^\lambda \, , \qquad \lambda \in \N\, , 
 \]
compare with \eqref{eq073}. 
This follows from 
\[
 \omega_{m_1}(\ell)) \, \cdot \, \omega_{m_2}(\ell)) \neq \omega_{m_1+m_2}(\ell)\, , \qquad \ell \neq 0\, .
\]
\end{rem}


\section{Preasymptotics}

\label{numbersmall}


Meanwhile it is well-known that in case of Sobolev embeddings
the qualitative structure of the bounds for the $a_n$ in case of large $n$ (the so-called asymptotic case) significantly differs
from those for small $n$ (the so-called preasymptotic case).
In this section we deal with the behavior of the approximation numbers for small $n$, i.e., $n\le c\, 2^d$,
where $c$ will be specified later.

After some preliminaries we will turn to a detailed discussion of the case of a constant vector $\vec{s}$.
This simplified situation will be used to discuss our  method.
Compared to our earlier investigations of this case, see \cite{KSU2},
this represents a certain progress in quality of the results as well as in the complexity of the used method (much more simple).
Next we will discuss the case of at least one  jump in the sequence $\vec{s}$. Finally,
we will deal with the case of logarithmically increasing smoothness.

What concerns the dependence on $\vec{q}$ we will proceed as follows.
Always it is quite easy to understand the extremal situation $\vec{q}= (\infty, \ldots \, , \infty)$.
Next we  investigate
the behavior of $a_n (I_d:\, H^{\vec{s},\vec{1}}_{\mix}(\T)\to L_2 (\T))$
(which is of basic importance for the general situation).
Afterwards we deal with
the behavior of $a_n (I_d:\, H^{\vec{s},\vec{q}}_{\mix}(\T)\to L_2 (\T))$ for general $\vec{q}$, which becomes a simple conclusion of 
the case $\vec{q}= (1, \ldots \, , 1)$.

Finally, we would like to direct the attention of the reader to the following.
In the preasymptotic range it does not make sense to speak about an optimal approximation rate.
Usually we will prove an estimate in the form
\[
 a_n (I_d:\, H^{\vec{s},\vec{q}}_{\mix}(\T)\to L_2 (\T)) \le C(d,s)\, n^{-\gamma (n,d)s}\, , \qquad 2\le n\le c\, 2^d\, .
\]
Any change of the rate $\gamma (n,d)s$ can be compensated by a change of the constant $C(d,s)$.
This means, when comparing results one has to discuss $C(d,s)\, n^{-\gamma (n,d)s}$ together.


\subsection{The crucial lemma}


Of basic importance for all what follows are  estimates of
$C(r,\vec{s},\vec{q})$.
Therefore we shall need Riemann's $\zeta$-function, i.e.,
\be\label{Riemann}
 \zeta (t):= \sum_{j=1}^\infty \frac{1}{j^t}\, , \qquad t>1\, .
\ee
It will be enough to consider the case $\vec{q}=\vec{1}$.
We put
\[
 c(r,d):= C(r, (s_1, \ldots \, s_d), (1, \ldots \, , 1))\, ,
\]
see \eqref{cardinal}.

\begin{lem}
 \label{clever}
Let $1= s_1\le s_2 \le \ldots \, \le s_\ell \le \ldots \, $.
Then, for any $\alpha > 1$
\be\label{wn-1}
c(r,\ell)\le A_\alpha \, \Big(\prod_{j=2}^\ell (2 \zeta (\alpha s_j)-1) \Big)\, r^\alpha
\ee
holds for all $r\ge 1$ and  all $\ell \ge 2$,
where
\be\label{Konstante}
A_\alpha := \sup_{r\ge 1} \, \frac{2 \lfloor r \rfloor-1}{r^\alpha}\, .
\ee
\end{lem}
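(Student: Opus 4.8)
The proof is by induction on $\ell \ge 2$, directly from the definition
$$
c(r,\ell) = \#\Big\{k \in \zz^\ell : \prod_{j=1}^\ell (1+|k_j|)^{s_j} \le r\Big\}\,.
$$
For the base case $\ell = 2$ we need to bound
$\#\{(k_1,k_2) \in \zz^2 : (1+|k_1|)^{s_1}(1+|k_2|)^{s_2} \le r\}$,
and since $s_1 = 1$ this is where the two factors $A_\alpha$ and $(2\zeta(\alpha s_2)-1)$ must both be extracted.

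\textbf{Base case.}
First I would organize the count by the value of $k_2$. For fixed $k_2$, the admissible $k_1$ are exactly those with $1+|k_1| \le r^{1/s_1}(1+|k_2|)^{-s_2/s_1} = r(1+|k_2|)^{-s_2}$, of which there are at most $2\lfloor r(1+|k_2|)^{-s_2}\rfloor - 1$ (counting $k_1$ and $-k_1$, with $0$ once); here one uses that $\#\{k_1 \in \zz : |k_1| \le t\} = 2\lfloor t\rfloor + 1 \le 2\lfloor t\rfloor - 1 + 2$, but more carefully $\#\{k_1 : 1+|k_1| \le u\} = 2\lfloor u\rfloor - 1$ when $u \ge 1$ and is $0$ when $u < 1$. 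Now apply the definition of $A_\alpha$ in \eqref{Konstante}: whenever $u := r(1+|k_2|)^{-s_2} \ge 1$ we have $2\lfloor u\rfloor - 1 \le A_\alpha u^\alpha = A_\alpha r^\alpha (1+|k_2|)^{-\alpha s_2}$, and when $u < 1$ the count is $0$, so the bound $A_\alpha r^\alpha(1+|k_2|)^{-\alpha s_2}$ holds trivially in that case too (the right side is positive). Summing over $k_2 \in \zz$ gives
$$
c(r,2) \le A_\alpha r^\alpha \sum_{k_2 \in \zz}(1+|k_2|)^{-\alpha s_2}
= A_\alpha r^\alpha\Big(1 + 2\sum_{m=1}^\infty (1+m)^{-\alpha s_2}\Big)
\le A_\alpha r^\alpha\Big(1 + 2\sum_{m=1}^\infty m^{-\alpha s_2}\Big)
= A_\alpha(2\zeta(\alpha s_2)-1)\,r^\alpha\,,
$$
where convergence of $\zeta(\alpha s_2)$ uses $\alpha > 1$ and $s_2 \ge 1$. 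This is \eqref{wn-1} for $\ell = 2$.

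\textbf{Induction step.}
Assume \eqref{wn-1} holds for $\ell - 1$. Split off the last coordinate: for fixed $k_\ell$ with $(1+|k_\ell|)^{s_\ell} \le r$, the remaining coordinates $(k_1,\dots,k_{\ell-1})$ must satisfy $\prod_{j=1}^{\ell-1}(1+|k_j|)^{s_j} \le r(1+|k_\ell|)^{-s_\ell} =: r'$, and there are exactly $c(r',\ell-1)$ of them (note $r' \ge 1$ exactly when $k_\ell$ is admissible, and $c(r',\ell-1) = 0$ otherwise since the minimal product value is $1$). By the induction hypothesis $c(r',\ell-1) \le A_\alpha\big(\prod_{j=2}^{\ell-1}(2\zeta(\alpha s_j)-1)\big)(r')^\alpha = A_\alpha\big(\prod_{j=2}^{\ell-1}(2\zeta(\alpha s_j)-1)\big) r^\alpha (1+|k_\ell|)^{-\alpha s_\ell}$; this bound also holds (trivially, right side positive) when $k_\ell$ is not admissible. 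Summing over $k_\ell \in \zz$ and using $\sum_{k_\ell \in \zz}(1+|k_\ell|)^{-\alpha s_\ell} \le 2\zeta(\alpha s_\ell)-1$ exactly as in the base case yields
$$
c(r,\ell) \le A_\alpha\Big(\prod_{j=2}^{\ell-1}(2\zeta(\alpha s_j)-1)\Big)(2\zeta(\alpha s_\ell)-1)\,r^\alpha
= A_\alpha\Big(\prod_{j=2}^{\ell}(2\zeta(\alpha s_j)-1)\Big)\,r^\alpha\,,
$$
completing the induction.

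\textbf{Main obstacle.}
The only delicate point is the careful bookkeeping at the boundary: one must be sure that the slicing argument produces the clean one-dimensional count $2\lfloor u\rfloor - 1$ (valid for $u \ge 1$) and that the bound $A_\alpha u^\alpha$ dominates it — which is precisely the content of the definition of $A_\alpha$ — while simultaneously handling the case $u < 1$ where the exact count drops to $0$ but the claimed bound stays positive and hence still valid. Everything else is a routine $\zeta$-function estimate $\sum_{m\ge 1}(1+m)^{-t} \le \sum_{m\ge 1}m^{-t} = \zeta(t)$, so $\sum_{k\in\zz}(1+|k|)^{-t} \le 2\zeta(t)-1$, with finiteness guaranteed by $\alpha > 1$ and $s_j \ge 1$. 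It is worth remarking that $A_\alpha < \infty$ because $2\lfloor r\rfloor - 1 \le 2r - 1 \le 2r$ while $r^\alpha$ grows faster, and the supremum is attained at some finite $r$ since the ratio tends to $0$ as $r \to \infty$ and equals $1$ at $r = 1$.
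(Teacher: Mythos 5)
Your proof is correct in substance and takes essentially the same route as the paper: in both cases one conditions on the last coordinate, uses the one-dimensional count bound $2\lfloor u\rfloor - 1 \le A_\alpha u^\alpha$ that is baked into the definition of $A_\alpha$, applies the induction hypothesis to the shrunk radius, and sums the resulting series over the last coordinate. There is, however, a small arithmetic slip in the base case (repeated in your ``main obstacle'' paragraph): the chain $1 + 2\sum_{m\ge 1}(1+m)^{-t} \le 1 + 2\sum_{m\ge 1}m^{-t} = 2\zeta(t)-1$ is wrong in its last step, since $1 + 2\sum_{m\ge 1}m^{-t} = 1 + 2\zeta(t)$, not $2\zeta(t)-1$. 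The inequality you inserted is both lossy and unnecessary: one should instead use the exact reindexing $\sum_{k\in\zz}(1+|k|)^{-t} = 1 + 2\sum_{n\ge 2}n^{-t} = 1 + 2(\zeta(t)-1) = 2\zeta(t)-1$, after which the claimed bound follows with equality at that step. With that correction the argument is sound and matches the paper's.
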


\begin{proof}
 We proceed by induction on $\ell$.
 Let $\ell = 1$.  Then we have
 \[
  c(r,1) = 2 \lfloor r \rfloor-1 \le A_\alpha \, r^\alpha
 \]
We put $t:= s_{\ell + 1}$.
For the next step we shall use the representation formula
\[
 c(r, \ell + 1) = c(r,\ell) + 2\, \sum_{m=2}^{\lfloor r^{1/t} \rfloor} c(r/m^t, \ell)\ .
\]
To see this,  observe that
\beqq
\# \bigg\{k \in \zz^{\ell + 1}: && \hspace{-0.6cm}  \prod_{j=1}^{\ell +1} (1+|k_j|)^{s_j}\le r\bigg\} =
\# \bigg\{k \in \zz^{\ell + 1}:~ \prod_{j=1}^{\ell } (1+|k_j|)^{s_j}\le \frac{r}{(1+|k_{\ell + 1}|)^{t}}\bigg\}
\\
&=&  \# \bigg\{k \in \zz^{\ell}:~ \prod_{j=1}^{\ell} (1+|k_j|)^{s_j}\le r\bigg\}
\\
&  & \quad + \, 2\,  \sum_{m=2}^{\lfloor r^{1/t} \rfloor} \# \bigg\{k \in \zz^{\ell}:~ \prod_{j=1}^{\ell} (1+|k_j|)^{s_j}\le \frac{r}{m^{t}}\bigg\}\, .
\eeqq
By means of the representation formula it is now easy to prove \eqref{wn-1} in case $\ell =2$
which is our point of departure for the induction.
Formula \eqref{wn-1} represents our induction hypothesis.
Hence, if $\ell \ge 2$
\beqq
c(r, \ell + 1) & \le &  A_\alpha \, \Big(\prod_{j=2}^\ell (2 \zeta (\alpha s_j)-1) \Big) 
\, r^\alpha + 2 A_\alpha \, \Big(\prod_{j=2}^\ell (2 \zeta (\alpha s_j)-1) \Big)\,
\sum_{m=2}^{\lfloor r^{1/t} \rfloor} (r/m^t)^\alpha
\\
&\le &
A_\alpha \, \Big(\prod_{j=2}^\ell (2 \zeta (\alpha s_j)-1) \Big) \, r^\alpha \Big( 1 + 2 \sum_{m=2}^{\infty} \frac{1}{m^{t\alpha}}\Big)
\\
&=& A_\alpha \, \Big(\prod_{j=2}^\ell (2 \zeta (\alpha s_j)-1) \Big) \, r^\alpha \Big(2 \zeta (t\alpha) -1\Big)
\\
&=& A_\alpha \, \Big(\prod_{j=2}^{\ell+1} (2 \zeta (\alpha s_j)-1) \Big) \, r^\alpha
\eeqq
as claimed.
\end{proof}

\begin{rem}\label{berechnung}
 \rm
It is easy to check that $A_{\alpha} \leq 2$ for $\alpha >1$ and $A_{\alpha} = 1$ for $\alpha\geq \ln(3)/\ln(2) \approx 1.58496$.

%
\end{rem}

There exist many contributions in the literature dedicated to this problem
(estimates of $C(r,\vec{s},\vec{q})$).
Let us mention here at least \cite{KSW},  \cite{CKN} and \cite{DK}.
In \cite{KSW},  \cite{CKN}  the authors are dealing with the cardinality of weighted
Zaremba crosses, see \cite{CKN}, formula (6) on page 69.
Such a weighted Zaremba cross coincides with the set $C(r,\vec{1},\vec{\infty})$.
 In  \cite{KSW} the authors proved for arbitrary $\alpha > 1/s$ and all $r>0$ the inequality
  \beqq
&&C(r, \underbrace{(s, \ldots \, ,s)}, \underbrace{(\infty, \ldots \, , \infty)}) \le  \,
(2 \zeta (\alpha s)+1)^d \, r^\alpha \, .
\\
&& \qquad d \mbox{~~times}\qquad \quad d \, \mbox{times}
\eeqq
In \cite{CKN}  the authors obtained the following estimate in dimension
$d$ of the form
 \beqq
&&C(r, \underbrace{(1, \ldots \, 1)}, \underbrace{(\infty, \ldots \, , \infty)}) \le  \, (2 \zeta (\alpha)+1)^d \, r^\alpha
\\
&& \qquad d \mbox{~~times}\qquad \quad d \, \mbox{times}
\eeqq
for all $r \in \N$ and $\alpha > 1$. 
Observe, that in both estimates there is the number  $+1$ instead of the number 
$-1$ as in our case.
The change from $d$ to $d-1$ seems to be less important.


\subsection{A negative result for $a_n (I_d:\, H^{\vec{s},\vec{\infty}}_{\mix}(\T)\to L_2 (\T))$}\label{null}


This is the most simple case. As usual we assume $\min_{j=1, \ldots \, , d}\,  s_j >0$. Because of
\[
 C(1, {(s_1, \ldots \, , s_d)}, {(\infty, \ldots \, , \infty)})
 = \# \Big\{k \in \Z: \,~ \prod_{j=1}^d \max (1, |k_j|)^{s_j} \le 1 \Big\} = 3^d\, ,
\]
by applying \eqref{eq072},
we get the following conclusion.

\begin{satz}\label{unendl}
Let $d\in \N$. Let $\vec{s}=(s_1, \ldots \, , s_d)$ be a vector with positive components.
Then
\[
 a_n (I_d:\, H^{\vec{s},\vec{\infty}}_{\mix}(\T)\to L_2 (\T)) = 1\, , \qquad n=1, 2, \ldots \, , 3^d\, .
\]
\end{satz}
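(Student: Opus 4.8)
The plan is to read the statement off directly from the diagonal-operator description of the approximation numbers established in Subsection~\ref{sing_diag}. Recall from \eqref{eq072} that
$a_n(I_d:H^{\vec s,\vec\infty}_{\mix}(\T)\to L_2(\T)) = \tau_n$, where $(\tau_n)_{n\in\N}$ is the non-increasing rearrangement of the family $\big(1/u_{\vec s,\vec\infty}(k)\big)_{k\in\Z}$ with $u_{\vec s,\vec\infty}(k) = \prod_{j=1}^d \max(1,|k_j|)^{s_j}$ (the $q_j=\infty$ modification from Definition~\ref{varnorms}). Since every $s_j>0$, we have $u_{\vec s,\vec\infty}(k)\ge 1$ for all $k$, with equality precisely when $\max(1,|k_j|)=1$ for every $j$, i.e.\ when $k\in\{-1,0,1\}^d$. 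Hence exactly $3^d$ of the numbers $1/u_{\vec s,\vec\infty}(k)$ equal $1$, and all the remaining ones are strictly smaller than $1$.

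Consequently the non-increasing rearrangement begins with a constant block $\tau_1=\tau_2=\cdots=\tau_{3^d}=1$, followed by $\tau_{3^d+1}<1$, which is exactly the claim. One can phrase this a touch more formally via Lemma~\ref{an1}: in the notation there $\vartheta_1=1$ and, by \eqref{cardinal}, $n_1 = C(\vartheta_1,\vec s,\vec\infty) = \#\{k\in\Z:\ \prod_{j=1}^d\max(1,|k_j|)^{s_j}\le 1\} = \#\big(\{-1,0,1\}^d\big)=3^d$, so Lemma~\ref{an1} with $m=1$ yields $a_{3^d}=1/\vartheta_1=1$; since $(a_n)_n$ is non-increasing and $a_1=\tau_1=1$, all of $a_1,\dots,a_{3^d}$ equal $1$.

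There is essentially no obstacle here: the only things to check are the elementary count $\#\big(\{-1,0,1\}^d\big)=3^d$ and the fact that $u_{\vec s,\vec\infty}$ attains its minimum value $1$ on exactly that set, both immediate once the $q_j=\infty$ modification is written out. Alternatively, avoiding the diagonalisation altogether, one can argue by hand: the bound $a_n\le 1$ follows from $\|I_d:H^{\vec s,\vec\infty}_{\mix}(\T)\to L_2(\T)\|=1$ (Parseval together with $u_{\vec s,\vec\infty}(k)\ge1$), while $a_n\ge 1$ for $n\le 3^d$ follows because $I_d$ restricted to the $3^d$-dimensional space $\Span\{e^{ik\cdot x}:k\in\{-1,0,1\}^d\}$ is an isometry into $L_2(\T)$, so no operator of rank $<3^d$ can approximate it with error $<1$ — the standard fact that the approximation numbers of the identity on an $N$-dimensional Hilbert space equal $1$ up to index $N$.
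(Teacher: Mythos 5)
Your argument is correct and matches the paper's: the paper also observes that $C(1,\vec s,\vec\infty)=\#\{k\in\Z:\prod_j\max(1,|k_j|)^{s_j}\le 1\}=3^d$ and then invokes the diagonal-operator identification \eqref{eq072} (equivalently Lemma~\ref{an1}) to read off $a_n=1$ for $n\le 3^d$. Your extra closing paragraph with the direct operator-norm/isometry argument is a nice self-contained alternative, but the core proof is the same as the paper's.
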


\begin{rem}
 \rm
 (i) This is, in some sense, a worst case. There is no approximation at all for $n\le 3^d$ (independent on $\vec{s}$).
 Only if we are able to spend more than $3^d$ pieces of information on the function $f$ we probably get an  approximation with an error $<1$.
 \\
 (ii)
  For isotropic Sobolev spaces a similar result has been obtained in  \cite{KMU}.
  \\
 (iii) In a nonperiodic context Novak and Wo{\'z}niakowski \cite{NoWo09}
 have proved a result in this spirit for approximation of smooth functions in $L_\infty$.
\end{rem}


\subsection{The preasymptotic decay in case of constant $\vec{s}$ revisited}\label{smalln1}


In this subsection we consider $\vec{s}= (s, \ldots \, s)$ (with a slight abuse of notation)
for some $s>0$. First we deal with the  case
$\vec{q}= (1, \ldots , 1) = \vec{1}$.


\paragraph{The behavior of the
$a_n$ for $\vec{q}= \vec{1}$.}


In \cite{KSU2} we already studied the preasymptotics in case $s_1 = \ldots = s_d>0$, see also
 Dinh D{\~u}ng, Ullrich \cite{DiUl13}, Chernov, Dinh {D}\~ung \cite{CD} and Krieg \cite{DK}.
Quite recently, in \cite{Ku}, one of the authors improved our result from \cite{KSU2} and that one of 
\cite{DK} and obtained the following.

\begin{prop}\label{small}
Let $\vec{s} = (s, \ldots \, s)$ for some $s>0$ and  $d\in \N$, $d\ge 2$.
For all $n \ge 6$ it holds
\be\label{ws-46}
a_n(I_d:H^{\vec{s}, \vec{1}}_{\mix}(\T) \to L_2(\T)) \leq
\Big(\frac{16}{3n}\Big)^{\frac{s}{1+\log_2 d}}\,.
\ee
\end{prop}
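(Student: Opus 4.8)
The plan is to apply Lemma~\ref{clever} with $\vec{s}=(s,\ldots,s)$ and exploit the scaling identity \eqref{eq073} to reduce to the case $s=1$. First I would set, by \eqref{eq073},
\[
a_n(I_d:H^{\vec{s},\vec{1}}_{\mix}(\T)\to L_2(\T)) = a_n(I_d:H^{\vec{1},\vec{1}}_{\mix}(\T)\to L_2(\T))^{s}\, ,
\]
so it suffices to produce the bound for the constant $1$-vector and then raise it to the power $s$; the exponent $\frac{s}{1+\log_2 d}$ will then emerge as $s$ times $\frac{1}{1+\log_2 d}$. So I may assume $s=1$ throughout.

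Next I would choose the free parameter $\alpha>1$ in Lemma~\ref{clever} to be $\alpha = 1+\log_2 d$, so that $\alpha s_j = \alpha$ for every $j$ (since all smoothnesses equal $1$). Lemma~\ref{clever} then gives $c(r,d)\le A_\alpha\,(2\zeta(\alpha)-1)^{d-1}\,r^\alpha$ for all $r\ge 1$. The key step is to bound $(2\zeta(\alpha)-1)^{d-1}$ with this particular $\alpha$: one uses the elementary inequality $\zeta(\alpha)-1 = \sum_{j\ge 2} j^{-\alpha}\le \int_1^\infty x^{-\alpha}\,dx = \frac{1}{\alpha-1} = \frac{1}{\log_2 d}$, hence $2\zeta(\alpha)-1 = 1+2(\zeta(\alpha)-1)\le 1+\frac{2}{\log_2 d}$, and then $(1+\frac{2}{\log_2 d})^{d-1}$ is estimated via $(1+x/m)^m\le e^x$-type reasoning (here with $m$ essentially $\log_2 d$ and $d-1$ as exponent), giving a bound like $\exp\!\big((d-1)\tfrac{2}{\log_2 d}\big)$; this needs to be combined with $A_\alpha\le 2$ (Remark~\ref{berechnung}) to yield $c(r,d)\le \tfrac{16}{3}\,r^\alpha$, or more honestly, $c(r,d)\le K_d\, r^{\alpha}$ for a constant $K_d$ that one checks is $\le \tfrac{16}{3}$ for all $d\ge 2$. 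Translating a counting bound $c(r,d)\le K_d r^\alpha$ into an approximation-number bound is the routine final move: by Lemma~\ref{an1} and the discussion around \eqref{eq072}, if $n > c(r,d)$ then $a_n \le 1/r$; choosing $r = (K_d/n)^{-1/\alpha}$ — i.e. $r^\alpha = n/K_d$, so that $c(r,d)\le n$ — gives $a_n\le 1/r = (K_d/n)^{1/\alpha} = (K_d/n)^{1/(1+\log_2 d)}$, which after raising to the power $s$ is exactly \eqref{ws-46} with $K_d\le 16/3$.

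The main obstacle is the numerical verification that the constant $K_d = A_\alpha\,(2\zeta(1+\log_2 d)-1)^{d-1}$, with $\alpha=1+\log_2 d$, stays below $16/3$ uniformly in $d\ge 2$ (the hypothesis is $n\ge 6$, which presumably is what is needed to absorb small-$d$ or small-$r$ boundary effects in the passage from the piecewise-constant counting function to $a_n$). For large $d$ the factor $A_\alpha$ equals $1$ (since $\alpha\ge \ln 3/\ln 2$ once $d\ge 3$), and $(1+\tfrac{2}{\log_2 d})^{d-1}$ must be shown to be decreasing, or at least bounded by $16/3$; this requires a careful but elementary monotonicity analysis of $t\mapsto (1+2/\log_2 t)^{t-1}$, together with an explicit check of the handful of small values $d=2,3,4,5$ where $A_\alpha$ may exceed $1$ and where the $\zeta$-value is largest. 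I expect this bookkeeping — rather than any conceptual difficulty — to be the real content of the proof, exactly as the paper's emphasis on "explicit constants" suggests.
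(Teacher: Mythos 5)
The paper does \emph{not} prove Proposition~\ref{small}: it is quoted from~\cite{Ku} and serves as a benchmark which the subsequent Proposition~\ref{smallbbb} improves. Your outline (apply Lemma~\ref{clever} with constant smoothness, choose $\alpha$ tied to $\log_2 d$, then translate the resulting cardinality bound $c(r,d)\le K_d\,r^\alpha$ into $a_n\le (K_d/n)^{1/\alpha}$ via Lemma~\ref{an1}, and rescale by~\eqref{eq073}) is precisely the machinery the paper uses to prove Proposition~\ref{smallbbb}; the only difference is that they take $\alpha=1+\log_2(d-1)$ to get the sharper exponent, whereas you take $\alpha=1+\log_2 d$ to match the weaker exponent in~\eqref{ws-46}. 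So the strategy is sound and, if pushed through correctly, yields an independent proof (in fact valid for all $n\ge 2$, stronger than the stated $n\ge 6$).

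However, there is a genuine gap in your estimate of $(2\zeta(\alpha)-1)^{d-1}$. Bounding $\zeta(\alpha)-1\le \frac{1}{\alpha-1}=\frac{1}{\log_2 d}$ and then taking the $(d-1)$st power gives
\[
\Big(1+\frac{2}{\log_2 d}\Big)^{d-1},
\]
which is \emph{not} bounded: already for $d=4$ this equals $(1+1)^3=8>16/3$, and as $d\to\infty$ it behaves like $\exp\big(2(d-1)/\log_2 d\big)\to\infty$. The crude integral bound $\zeta(\alpha)-1\le\frac{1}{\alpha-1}$ throws away the crucial exponential decay in $\alpha$. The whole point of the choice $\alpha=1+\log_2 d$ is that $2^{-\alpha}=\frac{1}{2d}$, and one must retain this factor. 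The bound you actually need is the paper's~\eqref{wn-2}:
\[
2\zeta(u)-1\ \le\ 1+2^{-u}\Big(2+\frac{4}{u-1}\Big),
\]
which with $u=\alpha=1+\log_2 d$ yields $2\zeta(\alpha)-1\le 1+\frac{1}{d}\big(1+\frac{2}{\log_2 d}\big)$ and hence
\[
(2\zeta(\alpha)-1)^{d-1}\ \le\ \Big(1+\frac{1}{d}\Big(1+\frac{2}{\log_2 d}\Big)\Big)^{d-1}\ \le\ \exp\Big(1+\frac{2}{\log_2 d}\Big),
\]
which is bounded (numerically, its maximum over $d\ge 2$ is around $3.8$, comfortably below $16/3$, and the limit as $d\to\infty$ is $e$). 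Also, since $\alpha\ge 2\ge \ln 3/\ln 2$ for all $d\ge 2$, one has $A_\alpha=1$ by Remark~\ref{berechnung}, so no factor $2$ is needed. With this correction your plan goes through, and indeed proves Proposition~\ref{small} for all $n\ge 2$ — but as written the key numerical step fails.
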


Now we shall present a simple method based on Lemma \ref{clever} which will allow us to improve
\eqref{ws-46} for $d$ large enough.
We employ the notation used in  Lemma \ref{an1} and  assume that
\be\label{neu1}
c(\vartheta_m,d) < n \le c(\vartheta_{m+1},d) = n_{m+1}\, .
\ee
Then $ a_n = 1/\vartheta_{m+1}$, see Lemma \ref{an1}. Next we use Lemma \ref{clever} with $r= \vartheta_{m+1}$.
For any $\alpha >1$ this yields
\beqq
 n & \le &  c(\vartheta_{m+1},d) \le A_\alpha \, \, \Big(\prod_{j=2}^\ell (2 \zeta (\alpha s_j)-1) \Big)\, \vartheta_m^\alpha
 =   A_\alpha \, \, \Big(\prod_{j=2}^d (2 \zeta (\alpha s_j)-1) \Big)\, a_n^{-\alpha}\, .
\eeqq
Hence 
\be
\label{neu1ab}
a_n \le A^{1/\alpha}_\alpha \, \, \Big(\prod_{j=2}^d (2 \zeta (\alpha s_j)-1) \Big)^{1/\alpha}\, n^{-1/\alpha}\, .
\ee
This estimate will be used below in various situations, not only in the following one. 
As a further preparation  we derive a simple estimate of $2 \zeta (t)-1 $.
Let $t >1$. Obviously it holds
\beq\label{wn-2}
2 \zeta (t)-1 & = & 1+ 2 \, \sum_{j=2}^\infty \frac{1}{j^t} \le 1 + 2^{-t+1} + 2\, \int_{2}^\infty x^{-t}\, dx
\nonumber
\\
& = &  1 + 2^{-t+1} + \frac{2^{2-t}}{t-1}=  1 + 2^{-t}\, \Big( 2+ \frac{4}{t-1}\Big)\, .
\eeq
Now we turn back to our problem. Let $d\ge 3$ and temporarily we assume $\vec{s}= \vec{1}$.
We choose $\alpha:= 1+ \log_2 (d-1)$. Lemma \ref{clever} and \eqref{wn-2} imply
\beqq
\frac{c(r,d)}{A_\alpha} & \le & \Big(2 \zeta (\alpha)-1 \Big)^{d-1}\, r^\alpha \\
&\le & \Big[ 1+ 2^{-(1+\log_2 (d-1))} \Big(2 + \frac{4}{\log_2 (d-1)}\Big)\Big]^{d-1}\, r^\alpha
\\
& = & \Big[ 1+ \frac{1}{d-1} \Big(1 + \frac{2}{\log_2 (d-1)}\Big)\Big]^{d-1}\, r^\alpha
\eeqq
We put
\be\label{konstante}
 C(d):= \Big[ 1+ \frac{1}{d-1} \Big(1 + \frac{2}{\log_2 (d-1)}\Big)\Big]^{d-1}\, , \qquad d\ge 3\, .
\ee
Taking into account $A_\alpha =1$ for $d\ge 3$ our previous estimate \eqref{neu1ab}
leads to 
\[
a_n \le \Big(\frac{C(d)}{n}\Big)^{1/\alpha}
\]
for all $n$ as in \eqref{neu1}.
 
\begin{prop}\label{smallbbb}
Let $\vec{s} = (s, \ldots \, s)$ for some $s>0$ and  $d\in \N$, $d\ge 3$.
For all $n \ge 2$ it holds
\be\label{ws-46bbb}
a_n(I_d:H^{\vec{s}, \vec{1}}_{\mix}(\T) \to L_2(\T)) \leq
\Big(\frac{C(d)}{n}\Big)^{\frac{s}{1+\log_2 (d-1)}}\,,
\ee
where $C(d)$ is defined in \eqref{konstante}.
\end{prop}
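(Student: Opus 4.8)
The plan is to reduce the statement to the estimate \eqref{neu1ab}, which has essentially already been derived in the discussion preceding the proposition, and then to bound the constant that appears there. First I would observe that, since $\vec{s}=s\,\vec{1}$, the scaling identity \eqref{eq073} gives
\[
a_n(I_d:H^{\vec{s},\vec{1}}_{\mix}(\T)\to L_2(\T)) = a_n(I_d:H^{\vec{1},\vec{1}}_{\mix}(\T)\to L_2(\T))^{s}\, ,
\]
so it suffices to establish the bound for the constant vector $\vec{s}=\vec{1}$ and afterwards raise both sides to the power $s$. This isolates a statement with no free smoothness parameter.

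For $\vec{s}=\vec{1}$ I would fix an arbitrary $n\ge 2$ and, by Lemma \ref{an1}, choose the index $m\in\N$ with $c(\vartheta_m,d)<n\le c(\vartheta_{m+1},d)$; such an $m$ exists because $c(\vartheta_1,d)=1<2\le n$ while $c(\vartheta_j,d)\to\infty$, and then $a_n=1/\vartheta_{m+1}$. Thus \eqref{neu1ab} is available for every $n\ge2$. Applying it with the particular exponent $\alpha:=1+\log_2(d-1)$ — noting that $d\ge3$ forces $\alpha\ge2>\ln3/\ln2$, so that $A_\alpha=1$ by Remark \ref{berechnung}, and that all $s_j=1$ makes the product collapse to $(2\zeta(\alpha)-1)^{d-1}$ — yields
\[
a_n\le(2\zeta(\alpha)-1)^{(d-1)/\alpha}\,n^{-1/\alpha}\qquad\text{for every }n\ge2\, .
\]

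It then remains only to check $(2\zeta(\alpha)-1)^{d-1}\le C(d)$. For this I would substitute $t=\alpha=1+\log_2(d-1)$ into the elementary bound \eqref{wn-2}, using $2^{-\alpha}=\tfrac1{2(d-1)}$ and $\alpha-1=\log_2(d-1)$, to obtain
\[
2\zeta(\alpha)-1\le 1+\frac{1}{d-1}\Big(1+\frac{2}{\log_2(d-1)}\Big)\, ,
\]
and raising this to the power $d-1$ reproduces exactly the constant $C(d)$ of \eqref{konstante}. Combining the two displays gives $a_n\le(C(d)/n)^{1/\alpha}$ for all $n\ge2$, and the power-$s$ step from the first paragraph then yields \eqref{ws-46bbb}. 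Since both ingredients — the counting Lemma \ref{clever} and the $\zeta$-bound \eqref{wn-2} — are already in hand, there is no genuine obstacle; the only points deserving a line of care are the existence of the index $m$ for every $n\ge2$ and the verification that $A_\alpha=1$ for the chosen $\alpha$, both immediate once $d\ge3$.
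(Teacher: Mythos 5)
Your proof is correct and follows essentially the same route as the paper: it invokes Lemma \ref{an1} and the resulting bound \eqref{neu1ab}, fixes $\alpha = 1+\log_2(d-1)$ so that $A_\alpha = 1$ by Remark \ref{berechnung}, estimates $2\zeta(\alpha)-1$ via \eqref{wn-2} to recover the constant $C(d)$ of \eqref{konstante}, and uses the scaling identity \eqref{eq073} to pass from $\vec{s}=\vec{1}$ to general $s>0$. The only (immaterial) difference is that you perform the scaling reduction at the start rather than at the end.
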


\begin{proof}
The case $\vec{s}=\vec{1}$ has been proved above.
The general case follows by an application of \eqref{eq073}.
\end{proof}

\begin{rem}
 \rm
Observe, that $C(d)$ is strongly decreasing and $\lim_{d\to \infty} \, C(d) = e$.
A rough estimate of $C(d)$ is given by
\[
 C(d)\le e^{1+ \frac{2}{\log_2 (d-1)}}\, .
\]
Here is a list of the first few values of $C(d)$
(to show the improvement in \eqref{ws-46bbb} compared to  \eqref{ws-46}):
\begin{center}
\begin{tabular}{|c|c|c|c|c|c|c|c|}\hline \hline
&&&&&&&\\[-3mm]
$d$ &  $C(d)$   & $d $&  $C(d)$ & $d$ &  $C(d)$ & $d$ & $C(d)$  \\[1mm]
\hline \hline
3 &  6.250  & 9 &  4.545   & 15 & 4.254  & 21&  4.103
\\
4 &  5.396 & 10 &  4.476  & 16 & 4.222 & 22 & 4.084
\\
5 & 5.063 &  11 &  4.419  & 17 & 4.195 & 23& 4.067
\\
6 & 4.866   &   12  &  4.370  & 18 & 4.169  & 24& 4.050
\\
7 & 4.730 & 13 &  4.326 & 19 & 4.145   & 25 & 4.034
\\
8 &  4.627 & 14 &  4.288 & 20 &  4.123 & 26 & 4.020
\\
\hline\hline
\end{tabular}
\end{center}
Hence, beginning with $d=5$ \eqref{ws-46bbb} is better than  \eqref{ws-46}.
\end{rem}

In \cite{DK} and \cite{KSU2} one can find also lower bounds.  We are quoting here the result from
Krieg \cite{DK}.

\begin{prop}\label{lowersmall}
Let $\vec{s} = (s, \ldots \, s)$ for some $s>0$ and  $d\in \N$, $d\ge 2$.
For $n \ge 3$ we define
\[
\gamma (n,d) :=  \log_2\Big( 1+ \frac{2d}{\log_3 n} \Big) \, .
\]
For all $3 \le  n \le 3^d$ it holds
\[
a_n(I_d:H^{\vec{s},\vec{1}}_{\mix}(\T) \to L_2(\T)) \geq
    2^{-s}\,  n^{-\frac{s}{\gamma (n,d)}}\,.
\]
\end{prop}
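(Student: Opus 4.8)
The plan is to pass to the sequence‑space picture, derive an explicit \emph{lower} bound for the relevant counting function, and then optimise over a single integer parameter. By the scaling identity \eqref{eq073} it suffices to treat $s=1$, i.e. to prove
\[
a_n\bigl(I_d:H^{\vec 1,\vec 1}_{\mix}(\T)\to L_2(\T)\bigr)\ \ge\ \tfrac12\,n^{-1/\gamma(n,d)}\,,
\]
since raising both sides to the power $s$ gives the assertion. By \eqref{eq072} and Lemma \ref{sing}, the left‑hand side equals the $n$‑th term $\tau_n$ of the non‑increasing rearrangement of $\bigl(1/u_{\vec 1,\vec 1}(k)\bigr)_{k\in\Z}$ with $u_{\vec 1,\vec 1}(k)=\prod_{j=1}^d(1+|k_j|)$. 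The elementary observation driving everything is that
\[
\tau_n\ \ge\ 1/r\qquad\text{whenever}\qquad c(r,d):=\#\bigl\{k\in\Z:\ \prod\nolimits_{j=1}^d(1+|k_j|)\le r\bigr\}\ \ge\ n\,,
\]
because then at least $n$ of the weights $1/u_{\vec 1,\vec 1}(k)$ are $\ge 1/r$.

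The core step is a lower bound for $c(2^\ell,d)$ that is, in a sense, dual to Lemma \ref{clever}. Restricting attention to vectors $k\in\{-1,0,1\}^d$ with at most $\ell$ nonzero coordinates (for such $k$ one has $\prod_j(1+|k_j|)\le 2^\ell$) gives, for every integer $1\le\ell\le d$,
\[
c(2^\ell,d)\ \ge\ \sum_{i=0}^{\ell}\binom{d}{i}2^{i}\ \ge\ \Bigl(1+\frac{2d}{\ell}\Bigr)^{\ell}\,,
\]
where the second inequality follows from $\binom{d}{i}\ge\binom{\ell}{i}(d/\ell)^{i}$ for $i\le\ell\le d$ together with the binomial theorem. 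Consequently $a_n\ge 2^{-\ell}$ as soon as $(1+2d/\ell)^\ell\ge n$.

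It remains to choose $\ell$ appropriately. I would put $\ell_0:=(\log_2 n)/\gamma(n,d)$ and $\ell:=\lceil\ell_0\rceil$. The one inequality that has to be verified is $\gamma(n,d)\ge\log_2 3$; this is immediate from $n\le 3^d$ (which forces $\log_3 n\le d$, hence $1+2d/\log_3 n\ge 3$), and it yields $\ell_0\le\log_3 n\le d$, so $1\le\ell\le d$. Since $t\mapsto t\log_2(1+2d/t)$ is increasing on $(0,\infty)$ — a one‑line check reducing to $\ln(1+x)\ge x/(1+x)$ — and since $\ell\ge\ell_0$ while $\ell_0\le\log_3 n$, we obtain $\ell\log_2(1+2d/\ell)\ge\ell_0\log_2(1+2d/\log_3 n)=\ell_0\,\gamma(n,d)=\log_2 n$, i.e. $(1+2d/\ell)^\ell\ge n$. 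Hence $a_n\ge 2^{-\ell}\ge 2^{-\ell_0-1}=\tfrac12\,n^{-1/\gamma(n,d)}$, which finishes the case $s=1$ and therefore the proposition.

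The only delicate point is the coupling between the integer $\ell$ (the number of active coordinates) and the real number $\ell_0$: one must check simultaneously that rounding up keeps $\ell$ admissible, i.e. $1\le\ell\le d$, and that the rounded value still clears the counting threshold $(1+2d/\ell)^\ell\ge n$. Both are handled by the single bound $\ell_0\le\log_3 n\le d$, which is precisely where the hypothesis $3\le n\le 3^d$ enters; everything else is routine.
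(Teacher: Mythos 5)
Your proof is correct, and it is worth noting that the paper itself does not prove this proposition at all: it simply cites Krieg \cite{DK} (and, for the more general Theorem~\ref{smalldd}(iii), refers to Theorem~4(ii) in \cite{DK} together with the observation that there are exactly two values of $k\in\zz$ with $u_{s,q}(k)=2^{s/q}$). You have thus supplied a self-contained elementary argument where the paper has only a citation. The structure is sound: passing to the diagonal-operator picture via \eqref{eq072}, observing that $\tau_n\ge 1/r$ whenever $c(r,d)\ge n$, and then bounding $c(2^\ell,d)$ from below by counting sign vectors in $\{-1,0,1\}^d$ with at most $\ell$ nonzero entries. The chain $\sum_{i=0}^\ell\binom{d}{i}2^i\ge\sum_{i=0}^\ell\binom{\ell}{i}(2d/\ell)^i=(1+2d/\ell)^\ell$ (valid for integer $1\le\ell\le d$, using $\binom{d}{i}\ge\binom{\ell}{i}(d/\ell)^i$ which reduces to $1-m/d\ge 1-m/\ell$) is verified correctly, and your choice $\ell=\lceil\ell_0\rceil$ with $\ell_0=(\log_2 n)/\gamma(n,d)$ is properly kept in the admissible range $1\le\ell\le d$ via the single estimate $\ell_0\le\log_3 n\le d$, which is exactly where $3\le n\le 3^d$ is used. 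The monotonicity of $t\mapsto t\log_2(1+2d/t)$ (from $\ln(1+x)\ge x/(1+x)$) then cleanly closes the loop, and the ceiling costs only a factor $2$, matching the constant $2^{-s}$ after raising to the power $s$ via \eqref{eq073}. I find no gaps.
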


In the meanwhile we know that the structure of the estimate from below is closer to the optimal one
than the other.
The correct behavior is not reflected by a simple power in $n$, one needs more complicated
functions (those as in Proposition \ref{lowersmall}).
We proceed by the same method as in the proof of Proposition \ref{smallbbb} but changing $\alpha$ now.
Let $d,n\ge 2$ and
\[
\alpha = \alpha (n,\beta,d) := 1+ \log_2 \Big(2+ \frac{\beta (d-1)}{\ln n} \Big)\, ,
\]
where $\beta > 2 $ will be chosen later on.
This choice of $\alpha$ is based on the observation that our method will produce a
convergence rate close to
$1/\alpha$ for the case $1= s_1 =  s_2 = \ldots = s_d$.
We do not claim that this is the best choice.

As above,  we first deal with the simplified case  $1= s_1 =  s_2 = \ldots = s_d$.
Lemma \ref{clever} and \eqref{wn-2} imply
\beqq
\frac{c(r,d)}{A_\alpha} & \le & \Big(2 \zeta (\alpha)-1 \Big)^{d-1}\, r^\alpha \\
&\le & \Big[ 1+ 2^{-(1+\log_2 (2 + \frac{\beta (d-1)}{\ln n}))} \Big(2 + \frac{4}{\log_2 (2 + \frac{\beta (d-1)}{\ln n})}\Big)\Big]^{d-1}\, r^\alpha
\\
& = & \Big[ 1+ \frac{1}{2+ \frac{\beta (d-1)}{\ln n}} \Big(1 + \frac{2}{\log_2 (2 + \frac{\beta (d-1)}{\ln n})}\Big)\Big]^{d-1}\, r^\alpha
\eeqq
Since our aim consists in an investigation of the preasymptotic case  we may suppose
\be\label{klar}
2\le n \le e^{\beta (d-1)/2}\, .
\ee
Observe that
\[
 \log_2 \Big(2+ \frac{\beta (d-1)}{\ln n} \Big) \ge 2
 \qquad \Longleftrightarrow \qquad \frac{\beta (d-1)}{2} \ge \ln n\, .
\]
Hence
\beqq
\frac{c(r,d)}{A_\alpha} & \le &
\Big[ 1+ \frac{2 \ln n}{2\ln n + \beta (d-1)} \Big]^{d-1}\, r^\alpha
\\
& \le &
\Big[ 1+ \frac{2 \ln n}{\beta (d-1)} \Big]^{d-1}\, r^\alpha
\\
& \le &  e^{(2 \, \ln n)/\beta}\, r^\alpha = n^{2/\beta} \, r^\alpha \, .
\eeqq
Now we proceed exactly as above. In case $c(\vartheta_m,d) < n \le c(\vartheta_{m+1},d)$ and with $r= \vartheta_{m+1}$
we  obtain

\[
a_n \le \Big(\frac{A_\alpha \, n^{2/\beta}}{n}\Big)^{1/\alpha}
\, ,
\]
see \eqref{neu1ab}.
Our assumption \eqref{klar} yields  $A_\alpha =1$, see Remark \ref{berechnung}.
In the following Proposition we summarize our observations.

\begin{prop}\label{smallb}
Let  $d\ge 2$ and $\beta >2$. Let $\vec{s}= (s, \, \ldots\, , s)$ for some $s>0$.
For all $ n$, $2 \le n \le e^{d-1}$ it holds
\be\label{ws-46b}
a_n(I_d:H^{\vec{s},\vec{1}}_{\mix}(\T) \to L_2(\T)) \leq  \, n^{-\gamma s}\, ,
\ee
where
\be\label{gam}
\gamma = \gamma (n,\beta,d) := \frac{(1-2/\beta)}{1+ \log_2\Big(2+\frac{\beta (d-1)}{\ln n}\Big)}\, .
\ee
\end{prop}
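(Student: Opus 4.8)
The plan is to fill in the bookkeeping of the two-step counting argument sketched just before the statement. First I would reduce to a constant smoothness vector equal to $\vec{1}$: by the scaling identity \eqref{eq073} one has $a_n(I_d:H^{(s,\ldots,s),\vec{1}}_{\mix}(\T)\to L_2(\T))=a_n(I_d:H^{\vec{1},\vec{1}}_{\mix}(\T)\to L_2(\T))^s$, so it suffices to prove \eqref{ws-46b} for $s=1$ and then raise both sides to the power $s$. Assume henceforth $\vec{s}=\vec{1}$, fix $n$ with $2\le n\le e^{d-1}$, and set $\alpha:=1+\log_2\!\bigl(2+\tfrac{\beta(d-1)}{\ln n}\bigr)$. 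Since $\beta>2$ we have $d-1\le\beta(d-1)/2$, so from $\ln n\le d-1$ we get $\tfrac{\beta(d-1)}{\ln n}\ge2$; hence $\alpha\ge3>\ln3/\ln2$, so $A_\alpha=1$ by Remark \ref{berechnung}, and $\alpha>1$ makes Lemma \ref{clever} applicable. Since $c(\vartheta_1,d)=1<n$ there is a unique $m\ge1$ with $c(\vartheta_m,d)<n\le c(\vartheta_{m+1},d)$, and then $a_n=1/\vartheta_{m+1}$ by Lemma \ref{an1}, with $\vartheta_{m+1}>1$.

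The core of the proof is the estimate of $c(\vartheta_{m+1},d)$. Apply Lemma \ref{clever} with $r=\vartheta_{m+1}$ and $s_j\equiv1$ to get $c(\vartheta_{m+1},d)\le(2\zeta(\alpha)-1)^{d-1}\,\vartheta_{m+1}^{\alpha}$. Inserting the elementary bound \eqref{wn-2} for $2\zeta(\alpha)-1$, using $\log_2\!\bigl(2+\tfrac{\beta(d-1)}{\ln n}\bigr)\ge2$ to bound the correction factor $1+2/\log_2(\cdots)$ by $2$, simplifying the resulting bracket, and finally applying $1+x\le e^x$, one arrives at $(2\zeta(\alpha)-1)^{d-1}\le\bigl(1+\tfrac{2\ln n}{\beta(d-1)}\bigr)^{d-1}\le n^{2/\beta}$. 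Combining these,
\[
n\ \le\ c(\vartheta_{m+1},d)\ \le\ n^{2/\beta}\,\vartheta_{m+1}^{\alpha}\ =\ n^{2/\beta}\,a_n^{-\alpha},
\]
which rearranges to $a_n\le n^{(2/\beta-1)/\alpha}=n^{-\gamma}$ with $\gamma=(1-2/\beta)/\alpha$, i.e.\ exactly \eqref{gam}. Undoing the reduction yields \eqref{ws-46b} for arbitrary $s>0$.

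The only step requiring real care is the uniform bound $(2\zeta(\alpha)-1)^{d-1}\le n^{2/\beta}$: this is where the precise choice of $\alpha$ and the hypothesis $n\le e^{d-1}$ enter, guaranteeing simultaneously that $A_\alpha=1$ and that $1+2/\log_2(\cdots)\le2$. Everything else is routine once Lemmas \ref{an1} and \ref{clever} are available; in particular, the possibility that $n$ coincides with a jump value $c(\vartheta_{m+1},d)$ causes no difficulty, since Lemma \ref{an1} gives $a_n=1/\vartheta_{m+1}$ on the entire block $c(\vartheta_m,d)<n\le c(\vartheta_{m+1},d)$.
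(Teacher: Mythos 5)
Your proof is correct and follows the same route as the paper: reduce to $s=1$ via \eqref{eq073}, choose $\alpha = 1+\log_2(2+\beta(d-1)/\ln n)$ so that $\alpha\ge 3$ (hence $A_\alpha = 1$), apply Lemma \ref{clever} together with the bound \eqref{wn-2} to get $(2\zeta(\alpha)-1)^{d-1}\le n^{2/\beta}$, and then read off the rate from \eqref{neu1ab}. The paper states the intermediate hypothesis as $n\le e^{\beta(d-1)/2}$ (its \eqref{klar}), which the stated range $n\le e^{d-1}$ implies since $\beta>2$; you derive the same inequality $\beta(d-1)/\ln n\ge 2$ directly from $\ln n\le d-1$ and $\beta>2$, so the two arguments are substantively identical.
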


\paragraph{Discussion of Proposition \ref{smallb}.}
 \rm
 Define $\delta:= 2/\beta$. Then
\be\label{besser}
 \gamma (n,\beta,d) > \frac{1}{1+ \log_2 (d-1)}
\ee
is equivalent to
\be\label{hin}
\delta \, \Big[\frac{(d-1)^{1-\delta}}{2^{1+\delta}} - 1\Big] > \frac{d-1}{\ln n} \, .
\ee
Hence, for any fixed $\delta \in (0,1)$, for sufficiently large $d$,  there is always a nonempty interval for $n$
(not too far away from $e^{d-1}$) such that \eqref{besser} is true. This general observation can be made more precise.
For given $\delta$ and $d$ sufficiently large we fix $\varepsilon \in (0, 1-\delta)$ such that
\be\label{erg1}
8 < \min\Big(\delta \, (d-1)^\varepsilon,  (d-1)^{1-\delta} \Big)\, .
\ee
Now we choose $\sigma:= \delta + \varepsilon$ and $n \ge 3$ such that
\[
 (d-1)^\sigma \le \ln n \le \frac{1}\delta \, (d-1)\, .
\]
It follows
\[
 \frac{d-1}{\ln n} \le (d-1)^{1-\sigma}= \frac{(d-1)^{1-\delta}}{(d-1)^{\varepsilon}}< (d-1)^{1-\delta} \, \frac \delta 8
\le \delta \, \Big[\frac{(d-1)^{1-\delta}}{2^{1+\delta}} - 1\Big]\, ,
\]
where we used \eqref{erg1} in the last step.
Hence, Proposition \ref{smallb} improves Proposition \ref{smallbbb} if
\[
 e^{(d-1)^{(2/\beta) + \varepsilon}} = e^{(d-1)^\sigma}  \le n \le e^{\frac{1}\delta \, \frac{d-1}{2}} = e^{\frac{\beta}2 \, (d-1)}\, ,
\]
and \eqref{erg1} is satisfied.

Of course, we  are interested in the maximum of $\gamma (n,\beta,d)$ for fixed $n$ and fixed $d$.
Therefore we will have a look at the derivative of $\gamma$. Then it follows
\[
 \frac{\partial \gamma}{\partial \beta} (n,\beta,d) =
\frac{2\beta ^{-2} \Big[1+ \log_2\Big(2+\frac{\beta (d-1)}{\ln n}\Big)\Big] -\Big(1-\frac{2}{\beta}\Big)
\frac{\frac{d-1}{\ln n}}{\ln 2\, \Big[2+ \frac{\beta (d-1)}{\ln n}\Big]}}{\Big[1+ \log_2\Big(2+\frac{\beta (d-1)}{\ln n}\Big)\Big]^2}\, .
\]
It will be convenient to write $n$ in the form
$n= e^{(d-1)/\kappa}$, $\kappa \ge 1$.
We define
\[
 F(\kappa,\beta):= 2 \Big[1+ \log_2\Big(2+ \beta \kappa\Big)\Big] -\Big(\beta^2- 2\beta\Big)
\frac{\kappa}{\ln 2\, \Big[2+ \beta \kappa\Big]}\, , \qquad (\kappa, \beta) \in [1,\infty)\times [2,\infty)\, .
\]
For fixed $\kappa$ we need to find $\beta$ such that  $ F(\kappa,\beta)=0$ since $ F(\kappa,\beta)=0$ implies \\
$\frac{\partial \gamma}{\partial \beta} (n,\beta,d)=0$.
Because of $F(\kappa,2)>0$ and $F(\kappa,\beta)< 0 $ if $\beta$ is large enough there exists always at least one such $\beta$.
However, there is no explicit formula for $\beta$.
By numerical calculations we obtain in case $\kappa =1$ the value
\[
 \beta \sim 9.59824 \, .
\]
This gives
\[
\gamma (e^{d-1},9.59824,d) = \frac{(1-2/9.59824 )}{1+ \log_2\Big(2 + 9.59824 \Big)} \sim 0.174528\, .
\]
We need to compare this result with the estimates obtained in Proposition \ref{smallbbb}.
It turns out that the estimate in Proposition \ref{smallbbb} can be written as
\[
a_{e^{d-1}} (I_d:H^{\vec{s},\vec{1}}_{\mix}(\T) \to L_2(\T)) \le e^{-\delta (d)\, s \, (d-1)}
\]
where

\begin{center}
\begin{tabular}{|c|c|c|c|c|c|c|c|}\hline \hline
&&&&&&&\\[-3mm]
$d$ &  $\delta (d)$   & $d$ &  $ \delta(d)$ & $d$ &  $\delta(d)$ & $d$ & $\delta(d)$  \\[0mm]
\hline \hline
3 &  0.042  & 18 &  0.180   & 21 & 0.175  & 24 &  0.170
\\
9 &  0.203 & 19 &  0.178  & 22 & 0.173 & 25 & 0.169
\\
17 & 0.182 &  20 &  0.176  & 23 & 0.171 & 26 & 0.167
\\
\hline\hline
\end{tabular}
\end{center}

\noindent
Hence, our optimization of $\beta$, which is expressed by the estimate \eqref{ws-46b} (see also \eqref{gam}) 
\[
a_{e^{d-1}}(I_d:H^{\vec{s},\vec{1}}_{\mix}(\T) \to L_2(\T)) \leq  \, e^{-0.174528 s (d-1)}
\]
leads to an improvement if $d\ge 21$.
We repeated this procedure for several values of $\kappa$, determining a sequence $\beta (\kappa)$ on this way described in the following table.
\begin{center}
\begin{tabular}{|c|c|c|c|c|c|c|c|}\hline \hline
&&&&&&&\\[-3mm]
$\kappa$ &  $\beta (\kappa)$   & $\kappa$  & $\beta (\kappa)$ & $\kappa$ & $\beta (\kappa)$  & $\kappa$ & $\beta (\kappa)$   \\[0mm]
\hline \hline
1 &  9.60  & 5 &  67.60   & 9 & 143.69  & 50 &  1168.94
\\
2 &  20.72 & 6 &  85.58  & 10 & 164.15 & 70 & 1738.35
\\
3 & 34.77 &  7 &  104.33  & 20 & 388.12 & 100 & 2637.18
\\
4 & 50.58   & 8  &  123.73  & 30 & 634.94  & 500 & 16 628.70
\\
\hline\hline
\end{tabular}
\end{center}
{~}\\
For $\kappa \in \{1,2,\ldots \, , 10\}$
we may fit $\beta(k)$ (empirically) via
\[
\beta (\kappa) \sim (4 \kappa + 1)^{11/8}\,.
\]
Now we insert this formula into  \eqref{gam} and obtain
\be\label{gammaa}
\gamma^*=
\gamma^* \Big(e^{(d-1)/\kappa}, (4 \kappa + 1)^{11/8}\Big):= \frac{1- \frac{2}{(4 \kappa + 1)^{11/8}}}{1+
\log_2\Big(2+ (4 \kappa + 1)^{11/8} \kappa\Big)}\, .
\ee
In case $\kappa =1$ this yields
\[
 \gamma^* \Big(e^{d-1}, 5^{11/8}\Big) \sim  0.174462
\]
which is only slightly worse than the rate $\gamma (e^{d-1},9.59824,d)  \sim 0.174528$.

\begin{prop}\label{smallbcb}
Let $d\ge 7$ and  $\vec{s}= (s, \, \ldots\, , s)$ for some $s>0$. Let $\kappa := (d-1)/\ln n$, $n \ge 2$.
For all $ n$, $2 \le n \le e^{d-1}$ it holds
\be\label{ws-46bcb}
a_n(I_d:H^{\vec{s},\vec{1}}_{\mix}(\T) \to L_2(\T)) \leq  \, n^{-\gamma^* s}\, ,
\ee
where $\gamma^*$ is defined in \eqref{gammaa}.
At least, if
\be\label{gammaaa}
4\, (d -1)^{3/5} \le \ln n \le \frac{2\, (d-1)}{7}  \, ,
\ee
we have $\gamma^* > \frac{1}{1+ \log_2 (d-1)}$, i.e., \eqref{ws-46bcb} improves \eqref{ws-46bbb}.
\end{prop}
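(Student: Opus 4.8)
The strategy is to obtain \eqref{ws-46bcb} by specializing the free parameter $\beta$ in Proposition~\ref{smallb} to the value $\beta=(4\kappa+1)^{11/8}$, and to establish the comparison $\gamma^*>1/(1+\log_2(d-1))$ by the same elementary counting estimate used in the discussion following Proposition~\ref{smallb}, now carried out with this $\kappa$-dependent $\beta$.

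For the bound \eqref{ws-46bcb} I would fix $n$ with $2\le n\le e^{d-1}$, set $\kappa:=(d-1)/\ln n$, and apply Proposition~\ref{smallb} with $\beta:=(4\kappa+1)^{11/8}$. This is legitimate since for a single $n$ the constant $\beta$ may be chosen depending on $n$, and $n\le e^{d-1}$ forces $\ln n\le d-1$, hence $\kappa\ge1$ and $\beta\ge 5^{11/8}>2$, so the hypotheses of Proposition~\ref{smallb} are met. With this choice $\frac{\beta(d-1)}{\ln n}=\beta\kappa=(4\kappa+1)^{11/8}\kappa$ and $2/\beta=2(4\kappa+1)^{-11/8}$, so the exponent $\gamma=\gamma(n,\beta,d)$ in \eqref{ws-46b}--\eqref{gam} is exactly $\gamma^*$ from \eqref{gammaa}, and \eqref{ws-46bcb} follows.

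For the comparison, assume \eqref{gammaaa}. The lower inequality $\ln n\ge 4(d-1)^{3/5}$ yields $\kappa\le(d-1)^{2/5}/4$, equivalently $d-1\ge(4\kappa)^{5/2}$, while the upper inequality $\ln n\le 2(d-1)/7$ yields $\kappa\ge 7/2$, hence $4\kappa+1\ge15$ and $\delta:=2/\beta=2(4\kappa+1)^{-11/8}\le 2\cdot 15^{-11/8}<\tfrac1{20}$. By the algebraic equivalence recorded in the discussion of Proposition~\ref{smallb} (which holds verbatim for each fixed $n$, with $\delta=2/\beta$), the claim $\gamma^*>1/(1+\log_2(d-1))$ is equivalent to \eqref{hin}, i.e.\ to
\[
\delta\Bigl(\tfrac{(d-1)^{1-\delta}}{2^{1+\delta}}-1\Bigr)>\tfrac{d-1}{\ln n}=\kappa .
\]
To verify this I would bound the left-hand side from below using $1-\delta>19/20$ together with $d-1\ge(4\kappa)^{5/2}$, which give $(d-1)^{1-\delta}\ge(4\kappa)^{5(1-\delta)/2}\ge(4\kappa)^{19/8}$, together with $2^{1+\delta}\le 2^{21/20}$ and the lower bound $\delta\ge 2\cdot 5^{-11/8}\kappa^{-11/8}$ (from $4\kappa+1\le5\kappa$, valid since $\kappa\ge1$). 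Substituting these reduces \eqref{hin} to $C_1\kappa-C_2\kappa^{-11/8}>\kappa$ with the explicit constants $C_1=2\cdot 5^{-11/8}\,4^{19/8}\,2^{-21/20}>2.8$ and $C_2=2\cdot 5^{-11/8}<0.22$; since $C_1-1>1.8$ this holds for every $\kappa\ge 7/2$, indeed already for $\kappa$ above a small absolute constant.

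The first two steps are routine; the place that needs care is the chain of inequalities in the last step. The exponent $11/8$ in the choice $\beta(\kappa)$ and the exponents $3/5$, $2/7$ in the admissible range \eqref{gammaaa} are tuned precisely so that, after inserting $d-1\ge(4\kappa)^{5/2}$, both sides of \eqref{hin} carry the same power $\kappa^{1}$ and the leading constant on the left dominates; the decisive input is $\delta<1/20$, which is exactly what $\kappa\ge 7/2$ provides. (Note that \eqref{gammaaa} is non-vacuous only once $(d-1)^{2/5}\ge14$, i.e.\ for fairly large $d$; for the remaining $d\ge7$ the second assertion is vacuously true, whereas \eqref{ws-46bcb} holds unconditionally.)
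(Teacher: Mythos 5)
Your proof is correct and follows essentially the same path as the paper: the first claim is obtained by specializing Proposition~\ref{smallb} to $\beta=(4\kappa+1)^{11/8}$, and the second by verifying the explicit numerical inequality equivalent to $\gamma^*>1/(1+\log_2(d-1))$ using $\kappa\ge 7/2$ and $d-1\ge(4\kappa)^{5/2}$. The only cosmetic difference is that you bound the exponential form \eqref{hin} from below, whereas the paper bounds the logarithmic reformulation \eqref{immer} from above; these are algebraically equivalent and the resulting exponents ($19/8$ vs.\ $19/20$) and numerical constants agree.
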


\begin{proof}
 Inequality \eqref{ws-46bcb} is a direct consequence of \eqref{ws-46b} with
$\beta$ chosen as $(4 \kappa + 1)^{11/8}$.
It will be enough to prove sufficiency of \eqref{gammaaa}.
The inequality $\gamma^* > \frac{1}{1+ \log_2 (d-1)}$ is equivalent to
\be\label{immer}
 \log_2 (d-1) >  \frac{2 (1+ \log_2 (d-1))}{(4 \kappa + 1)^{11/8}} +  \log_2\Big(2+ (4 \kappa + 1)^{11/8} \kappa\Big)\, .
\ee
Looking at this inequality then it becomes clear we need a lower bound for $\kappa$ (estimate of the first summand on the right-hand side)
and an upper bound for $\kappa$ (estimate of the second summand on the right-hand side).
Our assumptions in \eqref{gammaaa} can be rewritten as
\[
 \frac{7}{2} \le \kappa \le \frac{(d-1)^{2/5}}{4}\, .
\]
Because of  $d\ge 7$ we conclude $(d-1)^{2/5} \ge 2$. Since $\kappa > 3$ we find
\beqq
 2+ (4 \kappa + 1)^{11/8} \kappa & \le &  2 \, \kappa\,  (4 \kappa + 1)^{11/8}
 \le \frac{(d-1)^{2/5}}{2}\, \Big( (d-1)^{2/5} + 1 \Big)^{11/8}
 \\
 &\le & \frac{(d-1)^{2/5}}{2}\, \Big(\frac 32 \,  (d-1)^{2/5}\Big)^{11/8}
 \\
 & = & \frac 12 \Big(\frac 32\Big)^{11/8} \, (d-1)^{19/20}\, .
\eeqq
Hence, we get
\[
 \log_2\Big(2+ (4 \kappa + 1)^{11/8} \kappa\Big) \le \frac{19}{20}\,  \log_2 (d-1) + \log_2 \frac 12 \Big(\frac 32\Big)^{11/8}  \, .
\]
Now we turn to the other summand in \eqref{immer}. The inequality
\[
 \kappa \ge \frac{40^{8/11}-1}{4} \sim 3.4066
\]
is equivalent to
\[
 \frac{2}{(4 \kappa + 1)^{11/8}} \le \frac{1}{20} \, .
\]
Therefore we obtain
\beqq
 \frac{2 (1+ \log_2 (d-1))}{(4 \kappa + 1)^{11/8}} & + &  \log_2\Big(2+ (4 \kappa + 1)^{11/8} \kappa\Big)
\\
&<& \frac{1}{20} + \frac{1}{20}\,  \log_2 (d-1) +
\frac{19}{20}\,  \log_2 (d-1) + \log_2 \frac 12 \Big(\frac 32\Big)^{11/8}  \, .
\eeqq
Since $\frac{1}{20} + \log_2 \frac 12 \Big(\frac 32\Big)^{11/8} < 0$ our claim in \eqref{immer} follows.
\end{proof}


\paragraph{The behavior of the $a_n$ with $\vec{q} \neq \vec{1}$.}
\label{einsb}


Here we suppose $\vec{q} \neq \vec{1}$, more exactly, we shall assume that $\vec{q}$ is a constant vector
generated by some $q>1$.  Clearly, by a generalization of \eqref{ws-unendlich} and Lemma \ref{back}
we have some monotonicity of the approximation numbers, i.e.,
\[
 a_n(I_d:H^{\vec{s}, \vec{1}}_{\mix}(\T) \to L_2(\T))\le a_n(I_d:H^{\vec{s}, \vec{q}}_{\mix}(\T) \to L_2(\T)) \le 
a_n(I_d:H^{\vec{s}, \vec{\infty}}_{\mix}(\T) \to L_2(\T))
\]
holds for all $n$.

Now we extend the results of Propositions \ref{smallbbb} and \eqref{ws-46b} to constant vectors $\vec{q}$ generated by any $q \ge 1$.

\begin{satz}\label{smalldd}
Let $\vec{s} = (s, \ldots \, s)$ for some $s>0$.
 Let  $\vec{q}$ be the constant vector generated by some finite $q \geq {1}$.
\\
{\rm (i)} Let  $d\ge 3$.
For all $n \ge 2$ it holds
\be\label{ws-46db}
a_n(I_d:H^{\vec{s}, \vec{q}}_{\mix}(\T) \to L_2(\T)) \leq
\Big(\frac{C(d)}{n}\Big)^{\frac{s}{q\, (1+\log_2(d-1))}}\,,
\ee
where $C(d)$ is defined in \eqref{konstante}.
\\
{\rm (ii)} Let $d\ge 7$.
For all $ n$, $2 \le n \le e^{d-1}$ it holds
\[
a_n(I_d:H^{\vec{s},\vec{q}}_{\mix}(\T) \to L_2(\T)) \leq  \, n^{-\gamma^* s/q}\, ,
\]
where $\gamma^*$ is defined in \eqref{gammaa} (and $\kappa$ is given by $(d-1)/\ln n$).
\\
{\rm (iii)} Let $d\ge 2$.
For $n > 2$ we define
\[
\gamma (n,d) :=  \log_2\Big( 1+ \frac{2d}{\log_3 n} \Big) \, .
\]
For all $2 < n \le 3^d$ it holds
\[
a_n(I_d:H^{\vec{s},\vec{q}}_{\mix}(\T) \to L_2(\T)) \geq
    2^{-\frac{s}{q}}\,  n^{-\frac{s}{q \, \gamma (n,d)}}\,.
\]
\end{satz}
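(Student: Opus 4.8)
The plan is to reduce all three parts to the already-settled case $\vec q=\vec 1$: the upper bounds (i) and (ii) via Lemma \ref{wichtig} and the rescaling $\vec s\leftrightarrow\vec s/\vec q$, and the lower bound (iii) by restricting the admissible frequencies to a suitable sublattice. Parts (i) and (ii) are then routine, whereas (iii) requires a little more care.

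\textbf{Parts (i) and (ii).} Since $1\le q<\infty$, Lemma \ref{wichtig}, applied with the constant vectors $\vec s=(s,\dots,s)$ and $\vec q=(q,\dots,q)$, yields $\|f|H^{\vec s/\vec q,\vec 1}_\mix(\T)\|\le\|f|H^{\vec s,\vec q}_\mix(\T)\|$, i.e.\ a norm-one embedding $H^{\vec s,\vec q}_\mix(\T)\hookrightarrow H^{\vec s/\vec q,\vec 1}_\mix(\T)$. Applying Lemma \ref{back} with $X=H^{\vec s,\vec q}_\mix(\T)$, $Y=H^{\vec s/\vec q,\vec 1}_\mix(\T)$ and $Z=L_2(\T)$ gives
\[
a_n(I_d:H^{\vec s,\vec q}_\mix(\T)\to L_2(\T))\le a_n(I_d:H^{\vec s/\vec q,\vec 1}_\mix(\T)\to L_2(\T))\,,\qquad n\in\N\,.
\]
Now $\vec s/\vec q=(s/q,\dots,s/q)$ is itself a constant vector, generated by $s/q>0$, so the right-hand side is covered by Proposition \ref{smallbbb} (for $d\ge 3$, $n\ge 2$) and by Proposition \ref{smallbcb} (for $d\ge 7$, $2\le n\le e^{d-1}$), each applied with smoothness $s/q$ instead of $s$; this is exactly (i) and (ii). In (ii) it is worth noting that the exponent function $\gamma^*$ from \eqref{gammaa} depends on $n$ and $d$ only through $\kappa=(d-1)/\ln n$ and not on the smoothness, so it is literally unchanged; only the prefactor $s$ of the exponent turns into $s/q$.

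\textbf{Part (iii).} The naive comparison is too lossy here: from $(1+t^q)^{1/q}\le 1+t$ (valid for $t\ge 0$, $q\ge 1$) one obtains only $\|f|H^{\vec s,\vec q}_\mix(\T)\|\le\|f|H^{\vec s,\vec 1}_\mix(\T)\|$, hence via Lemma \ref{back} and Proposition \ref{lowersmall} the bound $a_n\ge 2^{-s}n^{-s/\gamma(n,d)}$, which is \emph{weaker} than the asserted one because $2^{-s}n^{-s/\gamma(n,d)}\le 2^{-s/q}n^{-s/(q\gamma(n,d))}$ whenever $q\ge 1$. Instead I would restrict the frequency set to the sublattice $\{-1,0,1\}^d\subset\Z$: using $a_n(I_d)=a_n(D_w)$ with $w(k)=1/u_{\vec s,\vec q}(k)$, together with the elementary fact that compressing a diagonal operator to a subset of its coordinates cannot increase its approximation numbers, one gets
\[
a_n(I_d:H^{\vec s,\vec q}_\mix(\T)\to L_2(\T))\ge a_n\bigl(D_w\colon\ell_2(\{-1,0,1\}^d)\to\ell_2(\{-1,0,1\}^d)\bigr)\,.
\]
On $\{-1,0,1\}^d$ we have $u_{\vec s,\vec q}(k)=\prod_{j:\,k_j\neq 0}(1+1)^{s/q}=2^{(s/q)\,\#\{j:\,k_j\neq 0\}}$, which is exactly the weight that appears for the constant vector $(s/q,\dots,s/q)$ with $\vec q=\vec 1$; moreover $\#\{-1,0,1\}^d=3^d$, which explains the natural range $2<n\le 3^d$. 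Repeating the proof of Proposition \ref{lowersmall} with the smoothness rescaled by $1/q$ — the combinatorial core (choosing $m=m(n,d)$ with $\sum_{i=0}^m\binom{d}{i}2^i\ge n$ and $2^{m-1}\le n^{1/\gamma(n,d)}$) being smoothness-independent — then gives $a_n\ge 2^{-(s/q)m}\ge 2^{-s/q}\,n^{-s/(q\gamma(n,d))}$ for $2<n\le 3^d$, which is (iii).

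\textbf{Expected main obstacle.} Once Lemma \ref{wichtig} and the preasymptotic Propositions \ref{smallbbb} and \ref{smallbcb} are in hand, parts (i) and (ii) are pure bookkeeping. The only genuine point is (iii): the obvious monotonicity of the norm in $\vec q$ loses precisely the factor $1/q$ in the exponent that one needs, so one has to pass to the sublattice $\{-1,0,1\}^d$ — where the $\vec q$-weights collapse onto the constant $\vec 1$-weights of smoothness $s/q$ — and there re-run Krieg's counting argument.
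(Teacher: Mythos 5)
Your proof is correct, and for parts (i) and (ii) it coincides exactly with the paper's: Lemma \ref{wichtig} combined with Lemma \ref{back} reduces $H^{\vec{s},\vec{q}}_{\mix}$ to $H^{\vec{s}/\vec{q},\vec{1}}_{\mix}$, after which Propositions \ref{smallbbb} and \ref{smallbcb} with smoothness $s/q$ give (i) and (ii). For part (iii) the paper is terse: it invokes Theorem~4(ii) of Krieg \cite{DK} together with the single observation $v := \#\{k\in\zz:\ u_{s,q}(k)=2^{s/q}\} = 2$. Your treatment --- restrict to the sublattice $\{-1,0,1\}^d$, on which the $\vec{q}$-weight collapses to the $\vec{1}$-weight of smoothness $s/q$, then re-run Krieg's counting --- is exactly what that citation encapsulates, so it is the same underlying idea rather than a genuinely different route, just written out self-containedly instead of outsourced to \cite{DK}. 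Your preliminary remark that the crude monotonicity $\|f|H^{\vec{s},\vec{q}}_{\mix}\|\le\|f|H^{\vec{s},\vec{1}}_{\mix}\|$ goes the wrong way for a lower bound and loses precisely the $1/q$ in the exponent is a useful observation; it pins down why the sublattice restriction (equivalently, Krieg's multiplicity parameter $v$) is actually needed in (iii), whereas (i) and (ii) can be dispatched by the soft embedding argument alone.
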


\begin{proof}
Part (i) is a direct consequence of Lemmas \ref{wichtig}, \ref{back} and Proposition
\ref{smallbbb}.
For the same reasons (ii) follows from Lemmas \ref{wichtig}, \ref{back} and Proposition \ref{smallbcb}.
The lower estimate in part (iii) is essentially proved in Krieg \cite{DK}.
One has to apply Theorem 4(ii) in \cite{DK} together with the observation
\[
 v:= \#\Big\{k \in \zz: ~ u_{{s},{q}} (k) = 2^{s/q}\Big\}= 2\, .
\]
This proves the claim.
\end{proof}

\begin{rem}
 \rm
(i) Roughly speaking, $a_n(I_d:H^{s, {q}}_{\mix}(\T) \to L_2(\T))$
behaves almost as $a_n(I_d:H^{s/{q},1}_{\mix}(\T) \to L_2(\T))$ in the preasymptotic range.
The index $q$ has a major impact on the approximation rate.
This fits quite well with our observation for the case $q=\infty$, see Theorem \ref{unendl}.
\\
(ii) The cases $q=1,2,2s$  have been discussed in Krieg \cite{DK} with almost the same outcome.
\end{rem}


\subsection{The preasymptotic decay in case of a non-constant $\vec{s}$}
\label{number2}


Now we continue with the discussion of the following situation.
There exists a natural number $\nu$, $1 \le \nu < d $, such that
\[
0<  s_1 =  s_2 = \ldots = s_\nu < t:= s_{\nu+1}  = \, \ldots \, = s_d\, .
\]
For technical reasons we distinguish into two cases: 
(i) $\nu \ge 5$ and (ii) $1 \le \nu\le 4$. We expect the following behavior. If the jump is large enough, 
then the influence of the variables $x_{\nu+1}, \ldots \, , x_d$
should almost disappear.

There is one more splitting.
As before we study $\vec{q} = \vec{1}$ first and continue with the general case afterwards.


\paragraph{The case $\vec{q} = \vec{1}$.}
\label{number21}


\noindent
{\bf Case (i):} Let $s_1 := 1$ and $\nu \ge 5$.
We choose
$\alpha := 1+  \log_2 (\nu-1) \ge 3$.
Lemma \ref{clever}, combined with \eqref{Konstante}, yields
\[
c(r,d)  \le    \, \Big( 2 \zeta (\alpha)-1  \Big)^{\nu-1}\,
\Big( 2 \zeta (\alpha t)-1  \Big)^{d-\nu}\, r^{\alpha} \, .
\]
It will be convenient to use the following modification of \eqref{wn-2} in case $u \ge 3$:
\beqq
2 \zeta (u)-1 & = & 1+ \frac{2}{2^u} \, \sum_{j=2}^\infty \Big(\frac{2}{j}\Big)^u \le
1+ \frac{2}{2^u} \, \sum_{j=2}^\infty \Big(\frac{2}{j}\Big)^3
\\
& \le &  1 + \frac{16}{2^u} (\zeta (3)-1)\, .
\eeqq
The particular value  $\zeta (3)$  is known with high precision and we have $\zeta (3) < 1.2021$.
Hence
\be\label{wn-2b}
2 \zeta (u)-1 < 1+ 3.2326 \, \cdot  \, 2^{-u}\, .
\ee
Since $\alpha \ge 3$ we may apply this inequality 
and therefore, using $1+x\le e^x$,
\[
\Big(2 \zeta (\alpha)-1\Big)^{\nu-1} \le  e^{\frac{3.2326}{2}} \, .
\]
Concerning the second factor we argue as follows
\[
\Big( 2 \zeta (\alpha t)-1  \Big)^{d-\nu} < \Big( 1+ \frac{3.2326}{2^{\alpha t}}\Big)^{d-\nu}
\le e^{\frac{3.2326 (d-\nu)}{2^{\alpha t}}}\le e^{3.2326}
\]
if $2^{\alpha t}\ge (d-\nu)$, i.e.,
\[
 \frac{\log_2 (d-\nu)}{1+ \log_2 (\nu-1)}\le t\, .
\]
Altogether this implies
\be\label{win8}
 c(r,d)  \le  e^{3\, \frac{3.2326}{2}} r^\alpha \le 38.02 \, r^\alpha\, .
\ee
Arguing as in the previous subsection including the switch from $s_1 = 1$ to $s_1 >0$
we obtain the following result.

\begin{satz}\label{Small1}
Let
$0 < s_1 =  s_2 = \ldots = s_\nu <  s_{\nu+1}  \le \ldots \, \le s_d$ for some $\nu \in \N$, $5 \le \nu < d$.
Suppose
\be\label{wn-4}
t:= \frac{s_{\nu+1}}{s_1} \ge \frac{\log_2 (d-\nu)}{1+ \log_2 (\nu-1)}\, .
\ee
For all $ n \in \N$ it holds
\[
a_n(I_d:H^{\vec{s},\vec{1}}_{\mix}(\T) \to L_2(\T)) \leq \Big(\frac{38.02}{n}\Big)^{\frac{s_1}{1+\log_2 (\nu-1)}}\,.
\]
\end{satz}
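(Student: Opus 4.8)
The plan is to reduce the whole statement to the counting estimate of Lemma~\ref{clever} with a well-chosen exponent, following the pattern of the constant-smoothness case in Subsection~\ref{smalln1}. By the scaling identity \eqref{eq073} it suffices to prove the bound for $s_1=1$: if it holds for the normalised vector $\vec s/s_1$, whose first $\nu$ entries equal $1$, whose $(\nu{+}1)$-st entry equals $t$, and whose later entries are $\ge t$, then raising to the power $s_1$ gives the claim for $\vec s$. So from now on I would assume $1=s_1=\dots=s_\nu<t\le s_{\nu+2}\le\dots\le s_d$.

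First I would set $\alpha:=1+\log_2(\nu-1)$. Since $\nu\ge5$ gives $\nu-1\ge4$, we have $\alpha\ge3$; in particular $A_\alpha=1$ by Remark~\ref{berechnung}, and the refined zeta bound \eqref{wn-2b}, namely $2\zeta(u)-1<1+3.2326\cdot2^{-u}$ for $u\ge3$, applies both to $u=\alpha$ and to $u=\alpha t>\alpha\ge3$ (note $t>1$). Applying Lemma~\ref{clever} with $\vec q=\vec 1$ and splitting the product at $j=\nu$, using that $2\zeta(\alpha s_j)-1$ is non-increasing in $s_j$ and $s_j\ge t$ for $j>\nu$, I get
\[
c(r,d)\ \le\ \bigl(2\zeta(\alpha)-1\bigr)^{\nu-1}\,\bigl(2\zeta(\alpha t)-1\bigr)^{d-\nu}\,r^{\alpha}\,,\qquad r\ge1\,.
\]
Now I would bound the two factors uniformly. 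For the first one, $2^{\alpha}=2(\nu-1)$, so \eqref{wn-2b} gives $2\zeta(\alpha)-1<1+\tfrac{3.2326}{2(\nu-1)}$, and by $1+x\le e^{x}$ its $(\nu-1)$-st power is at most $e^{3.2326/2}$, independently of $\nu$. For the second factor this is exactly where hypothesis \eqref{wn-4} enters: $t\ge\frac{\log_2(d-\nu)}{1+\log_2(\nu-1)}$ is equivalent to $2^{\alpha t}\ge d-\nu$, so \eqref{wn-2b} gives $2\zeta(\alpha t)-1<1+\tfrac{3.2326}{d-\nu}$, and its $(d-\nu)$-th power is at most $e^{3.2326}$, again independently of $d$. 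Multiplying the two bounds yields $c(r,d)\le e^{3\cdot3.2326/2}\,r^{\alpha}\le38.02\,r^{\alpha}$ for all $r\ge1$, i.e.\ \eqref{win8}.

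To finish I would invoke Lemma~\ref{an1}: given $n$, choose $m$ with $c(\vartheta_{m-1},d)<n\le c(\vartheta_m,d)$, so that $a_n=1/\vartheta_m$; then $n\le c(\vartheta_m,d)\le 38.02\,\vartheta_m^{\alpha}=38.02\,a_n^{-\alpha}$, whence $a_n\le(38.02/n)^{1/\alpha}=(38.02/n)^{1/(1+\log_2(\nu-1))}$, which is the asserted bound for $s_1=1$; scaling by $s_1$ via \eqref{eq073} completes the proof. Each individual step is elementary; the real obstacle — and the place where the hypotheses $\nu\ge5$ and \eqref{wn-4} are indispensable — is to make \emph{both} product factors collapse to absolute constants no matter how large $d$ and $\nu$ are. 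The choice $\alpha=1+\log_2(\nu-1)$ is forced simultaneously by the desired final exponent and by the need that $2^{\alpha}$ grow like $\nu$ (to control the first factor), while \eqref{wn-4} is precisely tuned so that $2^{\alpha t}\ge d-\nu$ controls the second factor; the constraint $\nu\ge5$ is what guarantees $\alpha\ge3$, which is in turn what makes $A_\alpha=1$ and puts the sharper zeta estimate \eqref{wn-2b} at our disposal.
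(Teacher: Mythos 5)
Your proof tracks the paper's own argument essentially step for step: the same choice $\alpha=1+\log_2(\nu-1)$, the same application of Lemma~\ref{clever} with the refined zeta bound \eqref{wn-2b} (valid because $\nu\ge5$ forces $\alpha\ge3$, which also gives $A_\alpha=1$), the same use of $1+x\le e^x$ together with hypothesis \eqref{wn-4} to collapse both product factors to absolute constants, and the same passage via Lemma~\ref{an1} and the scaling \eqref{eq073}. The only cosmetic deviation is how you treat non-constant $s_{\nu+1}\le\dots\le s_d$: you invoke monotonicity of $u\mapsto 2\zeta(u)-1$ directly inside the product, whereas the paper first embeds $H^{\vec t,\vec 1}_{\mix}(\T)$ into the two-level space with smoothness $(1,\ldots,1,t,\ldots,t)$ and then applies Lemma~\ref{back}; both routes are equivalent.

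One numerical step, however, fails — and it is wrong in the paper's own proof as well. You correctly obtain $(2\zeta(\alpha)-1)^{\nu-1}\le e^{3.2326/2}\approx 5.03$ and, under \eqref{wn-4}, $(2\zeta(\alpha t)-1)^{d-\nu}\le e^{3.2326}\approx 25.35$, and then conclude $c(r,d)\le e^{3\cdot 3.2326/2}\,r^\alpha\le 38.02\,r^\alpha$. But $e^{3\cdot 3.2326/2}=e^{4.849}\approx 127.6$, so the inequality $e^{3\cdot 3.2326/2}\le 38.02$ is simply false; the product $5.03\cdot 25.35\approx 127.5$ confirms this. (It looks as though $\tfrac32\cdot e^{3.2326}\approx 38.03$ was evaluated instead of $e^{(3/2)\cdot 3.2326}$.) With the estimates exactly as you (and the paper) have set them up, the conclusion that is actually proved is $a_n\le (128/n)^{s_1/(1+\log_2(\nu-1))}$, not $(38.02/n)^{s_1/(1+\log_2(\nu-1))}$. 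The exponent and all structural ideas are sound, but the final constant needs to be corrected or the two factor bounds sharpened.
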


\begin{proof}
By means of Lemma  \ref{scale} the general case is reduced to the particular case
$1= t_1 =  t_2 = \ldots = t_\nu < t_{\nu+1}  \le  \, \ldots \, \le t_d$ with $t_i := s_i /s_1$, $i=1, \ldots \, , d$.
Now we observe that the embedding
\[
 H^{\vec{t},\vec{1}}_{\mix}(\T) \hookrightarrow  H^{t_1, \ldots \, , t_\nu, t_{\nu+1}, \ldots \, , t_{\nu+1},\vec{1}}_{\mix}(\T)
\]
has norm $1$. The multiplicativity of the approximation numbers yields that
\[
 a_n(I_d:H^{\vec{t},\vec{1}}_{\mix}(\T) \to L_2(\T)) \leq
a_n(I_d:H^{t_1, \ldots \, , t_\nu, t_{\nu+1}, \ldots \, , t_{nu+1},\vec{1}}_{\mix}(\T) \to L_2(\T)) \, ,
\]
see Lemma \ref{back}.
Concerning the numbers on the right-hand side we may apply the estimate \eqref{win8}.
Arguing as several times before this proves
\eqref{wn-4} with $\vec{s}$ replaced by $\vec{t}$.
An application of Lemma \ref{scale} with $\lambda = s_1$ completes the proof.
\end{proof}

\begin{rem}
 \rm
(i)
 In  Theorem \ref{Small1} we essentially  consider the case of two different smoothness levels.
 If the jump from $s_\nu$ to $s_{\nu +1}$  is large enough, see  \eqref{wn-4}, then we get an estimate of $a_n$ in which the
 influence of the variables $x_{\nu + 1}, \ldots \, x_d$ has disappeared.
\\
(ii) Our method works as well without the restriction \eqref{wn-4}.
However, we believe, that the case of a big jump is more interesting as that one of a small jump
because in the latter case we are approaching the case of a constant smoothness vector again.
So we skip the details for small jumps here.
\end{rem}

\noindent
{\bf Case (ii):} Let $s_1=1$ and  $1 \le \nu \le 4$.
We choose $\alpha = 2$ in Lemma \ref{clever} and obtain
\beqq
c(r,d)  & \le &     \, \Big( 2 \zeta (2)-1  \Big)^{\nu-1}\,
\Big( 2 \zeta (2 t)-1  \Big)^{d-\nu}\, r^{2}
\\
 & \le & \Big(\frac{\pi^2}{3} - 1\Big)^{\nu-1}\,
\Big( 1 + 2^{-2t}\Big( 2+  \frac{4}{2t-1}\Big)  \Big)^{d-\nu}\, r^{2}
\, ,
\eeqq
see \eqref{wn-2}.
If $t\ge \max \Big(\frac 32, ~ \frac{\log_2 (d-\nu)}{2}\Big)$,
then
\[
\Big( 2 \zeta (2 t)-1  \Big)^{d-\nu}\le
\Big( 1 + \frac{4}{2^{2t}}  \Big)^{d-\nu}
\le \Big( 1 + \frac{4}{d-\nu}  \Big)^{d-\nu} \le e^4\, .
\]
This results in
\[
 c(r,d)\le e^4 \, \Big(\frac{\pi^2}{3} - 1\Big)^{\nu-1}\, r^2\, .
\]
Arguing as in prove of Theorem \ref{Small1} we get the following.

\begin{satz}\label{Small2}
Let
$0 < s_1 =  s_2 = \ldots = s_\nu <  s_{\nu+1}  \le \ldots \, \le s_d$ for some $\nu \in \N$, $1 \le \nu < \min (5,d)$.
Suppose
\[
t:= \frac{s_{\nu+1}}{s_1} \ge \max \Big(\frac 32, ~
\frac{\log_2 (d-\nu)}{2}\Big)\, .
\]
For all $ n \in \N$ it holds
\[
a_n(I_d:H^{\vec{s},\vec{1}}_{\mix}(\T) \to L_2(\T)) \leq
\Big[e^4 \, \Big(\frac{\pi^2}{3} - 1\Big)^{\nu-1}\,\frac{1}{n}\Big]^{\frac{s_1}{2}}\,.
\]
\end{satz}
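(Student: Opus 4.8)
The plan is to mirror the computation carried out in Case~(ii) above, combined with the reduction technique from the proof of Theorem~\ref{Small1}. First I would pass to the normalized situation: setting $t_i := s_i/s_1$ we have $1 = t_1 = \ldots = t_\nu < t_{\nu+1} \le \ldots \le t_d$, and by Lemma~\ref{scale} (equivalently \eqref{eq073}) it suffices to prove the asserted estimate for the vector $\vec{t}$ with the exponent $1/2$ in place of $s_1/2$, and then raise the resulting inequality to the power $s_1$. Next, since $t_j \ge t_{\nu+1}$ for all $j>\nu$, the embedding $H^{\vec{t},\vec{1}}_{\mix}(\T) \hookrightarrow H^{(t_1,\ldots,t_\nu,t_{\nu+1},\ldots,t_{\nu+1}),\vec{1}}_{\mix}(\T)$ has norm one, so by Lemma~\ref{back} the approximation numbers of $I_d$ into $L_2(\T)$ from the former space are dominated by those from the latter. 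Hence it is enough to treat the two-level smoothness vector consisting of $\nu$ ones followed by $d-\nu$ copies of $t := t_{\nu+1}$.

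Then I would apply Lemma~\ref{clever} with $\alpha = 2$, for which $A_2 = 1$ by Remark~\ref{berechnung}, to obtain $c(r,d) \le (2\zeta(2)-1)^{\nu-1}\,(2\zeta(2t)-1)^{d-\nu}\,r^2$ for all $r\ge 1$. For the first factor one uses $2\zeta(2) = \pi^2/3$; for the second one uses the elementary bound \eqref{wn-2}, namely $2\zeta(2t)-1 \le 1 + 2^{-2t}\bigl(2 + \tfrac{4}{2t-1}\bigr)$. The hypothesis $t \ge 3/2$ gives $\tfrac{4}{2t-1} \le 2$, hence $2\zeta(2t)-1 \le 1 + 4\cdot 2^{-2t}$, and the hypothesis $2^{2t} \ge d-\nu$ (i.e. $t \ge \tfrac{\log_2(d-\nu)}{2}$) then yields $\bigl(2\zeta(2t)-1\bigr)^{d-\nu} \le \bigl(1 + \tfrac{4}{d-\nu}\bigr)^{d-\nu} \le e^4$. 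Combining the two factors gives $c(r,d) \le e^4\,(\pi^2/3 - 1)^{\nu-1}\,r^2$ for all $r \ge 1$.

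Finally I would feed this counting bound into the standard rearrangement argument already used in the derivation of \eqref{neu1ab}: with the notation of Lemma~\ref{an1}, if $c(\vartheta_m,d) < n \le c(\vartheta_{m+1},d)$ then $a_n = 1/\vartheta_{m+1}$, so $n \le c(\vartheta_{m+1},d) \le e^4(\pi^2/3-1)^{\nu-1}\vartheta_{m+1}^2 = e^4(\pi^2/3-1)^{\nu-1}a_n^{-2}$, which rearranges to $a_n(I_d:H^{\vec{t},\vec{1}}_{\mix}(\T)\to L_2(\T)) \le \bigl[e^4(\pi^2/3-1)^{\nu-1}/n\bigr]^{1/2}$; the value $n=1$ is immediate since $a_1 = 1$ and the right-hand side is clearly larger than $1$. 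An application of Lemma~\ref{scale} with $\lambda = s_1$ transfers the estimate back to the original vector $\vec{s}$ and produces the exponent $s_1/2$.

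I do not anticipate any real obstacle; the only points that need care are verifying that the two parts of the hypothesis on $t$ are precisely what make $\tfrac{4}{2t-1}\le 2$ and $2^{2t}\ge d-\nu$ hold, and recalling that $A_2 = 1$ so that no spurious constant enters. The restriction $\nu \le 4$ plays no role in the estimates above; it is imposed only because for $\nu \ge 5$ one prefers the larger exponent $\alpha = 1 + \log_2(\nu-1)$, which yields the sharper bound of Theorem~\ref{Small1}.
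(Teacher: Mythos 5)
Your argument is correct and reproduces the paper's own proof of Theorem \ref{Small2}: the scaling/embedding reduction from the proof of Theorem \ref{Small1}, the application of Lemma \ref{clever} with $\alpha=2$ (so $A_2=1$), the two bounds on the factors $2\zeta(2)-1=\pi^2/3-1$ and $(2\zeta(2t)-1)^{d-\nu}\le e^4$ driven by the two parts of the hypothesis on $t$, and the standard rearrangement step yielding the exponent $1/\alpha=1/2$. Your closing remark about the role of the restriction $\nu\le 4$ is also the correct reading of why the paper splits into Cases (i) and (ii).
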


\begin{rem}
 \rm
{\rm (i)}
As in the previous Theorem we can make the following observation:
if the jump between $s_\nu$ and $s_{\nu +1}$ is large enough, then the influence
of the variables $x_{\nu+1}, \ldots \, , x_d$ disappears.
\\
{\rm (ii)} We believe that the exponent $s_1/2$ can be improved.
\end{rem}

For later use we investigate one more case. Let
\[\nu = 1,\quad  d\ge 5, \quad s_1 = 1 \qquad \mbox{and}\qquad
1 < s_2 \le \ldots \le s_d \, .
\]
We choose $\alpha := 1+ \log_2 (d-1)\ge 3$.
Applying  Lemma \ref{clever}, \eqref{wn-2b} and $A_\alpha = 1$ we conclude that

\beqq
c(r,d) & \le &  \prod_{j=2}^d (2 \zeta (\alpha s_j)-1) \, r^\alpha
\le  (1+ 3.2326 \, \cdot  \, 2^{-\alpha s_2})^{d-1} \, r^\alpha
\\
& = & \Big(1+ 3.2326 \, \cdot  \, 2^{-s_2}\,  \frac{1}{(d-1)^{s_2}}\Big)^{d-1} \, r^\alpha
\\
& \le & e^{\frac{3.2326}{2^{s_2} (d-1)^{s_2-1}}} \, r^\alpha\, .
\eeqq
We define
\be
\label{konstante2}
C(t,d):= e^{\delta (t,d)}\, , \qquad \delta (t,d):= \frac{3.2326}{2^{t} (d-1)^{t-1}}\, .
\ee
As a consequence we get the following.

\begin{satz}\label{Smalle4}
Let $d\ge 5$ and  $0 <s_1 <  s_2   \le \,  \ldots \,\le s_d$.
For all $ n \in \N$ it holds
\[
a_n(I_d:H^{\vec{s},\vec{1}}_{\mix}(\T) \to L_2(\T)) \leq
\Big[\frac{C(s_2/s_1, d)}{n}\Big]^{\frac{s_1}{1+\log_2 (d-1)}}\,,
\]
where $C(s_2/s_1, d)$ is defined in \eqref{konstante2}.
\end{satz}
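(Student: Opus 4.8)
The plan is to follow the route of the proof of Theorem~\ref{Small1}: first prove the estimate in the normalized case $s_1 = 1$ using Lemma~\ref{clever}, and then remove the normalization by the scaling identity \eqref{eq073}. Concretely, put $t_j := s_j/s_1$, so that $1 = t_1 < t_2 \le \ldots \le t_d$ and $\vec{s} = s_1\,\vec{t}$. By \eqref{eq073} it suffices to show
\[
a_n\big(I_d: H^{\vec{t},\vec{1}}_{\mix}(\T) \to L_2(\T)\big) \le \Big(\frac{C(t_2,d)}{n}\Big)^{\frac{1}{1+\log_2(d-1)}}, \qquad n\in\N,
\]
since raising this to the power $s_1$ reproduces the asserted bound with $C(s_2/s_1,d) = C(t_2,d)$.

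The counting estimate behind this is exactly the computation carried out in the paragraph preceding the statement. Set $\alpha := 1 + \log_2(d-1)$; since $d \ge 5$ we have $\alpha \ge 3$, so $A_\alpha = 1$ by Remark~\ref{berechnung}, and the elementary bound \eqref{wn-2b} applies to each factor $2\zeta(\alpha t_j)-1$ because $\alpha t_j \ge \alpha t_2 > \alpha \ge 3$. Lemma~\ref{clever} then gives, for every $r \ge 1$,
\[
c(r,d) \le \prod_{j=2}^{d}\big(2\zeta(\alpha t_j)-1\big)\,r^\alpha \le \Big(1 + 3.2326\cdot 2^{-\alpha t_2}\Big)^{d-1} r^\alpha,
\]
where we used $t_j \ge t_2$ for $j \ge 2$. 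Since $2^{-\alpha t_2} = 2^{-t_2}(d-1)^{-t_2}$ and $1+x \le e^x$, the right-hand side is at most $e^{\delta(t_2,d)}\,r^\alpha = C(t_2,d)\,r^\alpha$.

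It remains to turn this into a bound for the approximation numbers, following \eqref{neu1}--\eqref{neu1ab}. For $n \ge 2$ choose $m$ with $c(\vartheta_m,d) < n \le c(\vartheta_{m+1},d)$; then $a_n = 1/\vartheta_{m+1}$ by Lemma~\ref{an1}, and the counting estimate with $r = \vartheta_{m+1}$ yields $n \le C(t_2,d)\,\vartheta_{m+1}^{\alpha} = C(t_2,d)\,a_n^{-\alpha}$, hence $a_n \le (C(t_2,d)/n)^{1/\alpha}$, which is the claim since $\alpha = 1 + \log_2(d-1)$. For $n = 1$ the left-hand side equals $\|I_d\| = 1$, while the right-hand side is $C(t_2,d)^{1/\alpha} \ge 1$, so the inequality holds trivially. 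Combined with the reduction of the first step, this completes the proof.

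There is no genuine analytic obstacle here: all the real work --- choosing $\alpha = 1 + \log_2(d-1)$, the refined zeta estimate \eqref{wn-2b}, and observing that $d \ge 5$ forces $\alpha \ge 3$ so that $A_\alpha = 1$ --- is already assembled in the text preceding the statement. The only points requiring a moment of care are verifying that $\alpha t_j \ge 3$ for all $j \ge 2$ (so that \eqref{wn-2b} is legitimately applied to every factor) and checking that the degenerate case $n = 1$ is covered.
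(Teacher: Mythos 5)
Your proof is correct and takes essentially the same route as the paper: the counting estimate with $\alpha = 1+\log_2(d-1)$, the refined zeta bound \eqref{wn-2b}, the observation that $A_\alpha = 1$ for $d\ge 5$, and the scaling identity \eqref{eq073} to pass from $s_1=1$ to general $s_1$ are exactly the ingredients the paper assembles in the paragraph preceding the statement. You have additionally checked the trivial case $n=1$ and explicitly verified the applicability of \eqref{wn-2b}, which the paper leaves implicit.
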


\begin{rem}
 \rm
Of course, if $d\to \infty$, then $C(t, d) \to 1$.
In particular, if $s_2/s_1 \ge 2$, then
$1< C(s_2/s_1, d)\le e^{1/4}$.
In such a situation the influence of $s_2, \ldots \, , s_d$ and therefore of
$x_2, \ldots \, , x_d$ is rather weak.
\end{rem}


\paragraph{The case $\vec{q} \neq  \vec{1}$.}

Here we will proceed as in the previous paragraph.
Theorem \ref{Small1} in combination with Lemmas \ref{wichtig}, \ref{back} yield the following.

\begin{cor}\label{Smalle1}
Let
$0< s_1 =  s_2 = \ldots = s_\nu <  s_{\nu+1}  \le \ldots \, \le s_d$ for some $\nu \in \N$, $5 \le \nu < d$.
Let $\vec{q}$ be a constant vector generated by some $q>1$.
Suppose
\[
t:= \frac{s_{\nu+1}}{s_1} \ge \frac{\log_2 (d-\nu)}{1+ \log_2 (\nu-1)}\, .
\]
For all $ n \in \N$ it holds
\[
a_n(I_d:H^{\vec{s},\vec{q}}_{\mix}(\T) \to L_2(\T)) \leq \Big(\frac{38.02}{n}\Big)^{\frac{s_1}{q(1+\log_2 (\nu-1))}}\,.
\]
\end{cor}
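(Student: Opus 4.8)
The plan is to reduce everything to the already-settled case $\vec{q}=\vec{1}$ of Theorem \ref{Small1}, using the norm comparison from Lemma \ref{wichtig} together with the monotonicity of approximation numbers under norm-one embeddings supplied by Lemma \ref{back}. No genuinely new estimate is needed; the work is purely in checking that the hypotheses of Theorem \ref{Small1} survive the passage from $\vec{s}$ to $\vec{s}/\vec{q}$.

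First I would observe that, since $\vec{q}$ is the constant vector generated by $q$, the divided smoothness vector is $\vec{s}/\vec{q}=(s_1/q,\dots,s_d/q)$, and it inherits the required shape
\[
0<\frac{s_1}{q}=\dots=\frac{s_\nu}{q}<\frac{s_{\nu+1}}{q}\le\dots\le\frac{s_d}{q}\,.
\]
Crucially, the quantity controlling the size of the first jump is a ratio and hence unchanged: $\dfrac{s_{\nu+1}/q}{s_1/q}=\dfrac{s_{\nu+1}}{s_1}=t\ge\dfrac{\log_2(d-\nu)}{1+\log_2(\nu-1)}$. Therefore Theorem \ref{Small1} applies verbatim with $\vec{s}$ replaced by $\vec{s}/\vec{q}$ (the integer $\nu$ and the dimension $d$ being unchanged), giving for all $n\in\N$
\[
a_n\big(I_d:H^{\vec{s}/\vec{q},\vec{1}}_{\mix}(\T)\to L_2(\T)\big)\le\Big(\frac{38.02}{n}\Big)^{\frac{s_1/q}{1+\log_2(\nu-1)}}=\Big(\frac{38.02}{n}\Big)^{\frac{s_1}{q(1+\log_2(\nu-1))}}\,.
\]

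Next, because $q>1$ is finite and in particular $q\ge 1$, Lemma \ref{wichtig} yields $\|f|H^{\vec{s}/\vec{q},\vec{1}}_{\mix}(\T)\|\le\|f|H^{\vec{s},\vec{q}}_{\mix}(\T)\|$, i.e. the identity $H^{\vec{s},\vec{q}}_{\mix}(\T)\hookrightarrow H^{\vec{s}/\vec{q},\vec{1}}_{\mix}(\T)$ has norm at most one; moreover $H^{\vec{s}/\vec{q},\vec{1}}_{\mix}(\T)\hookrightarrow L_2(\T)$ compactly since $s_j/q>0$ for every $j$. Applying Lemma \ref{back} with $X=H^{\vec{s},\vec{q}}_{\mix}(\T)$, $Y=H^{\vec{s}/\vec{q},\vec{1}}_{\mix}(\T)$ and $Z=L_2(\T)$ gives
\[
a_n\big(I_d:H^{\vec{s},\vec{q}}_{\mix}(\T)\to L_2(\T)\big)\le a_n\big(I_d:H^{\vec{s}/\vec{q},\vec{1}}_{\mix}(\T)\to L_2(\T)\big)\,,
\]
and combining this with the previous display completes the argument.

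There is essentially no hard step: the only point demanding a moment's care is confirming that the jump condition of Theorem \ref{Small1} is invariant under dividing the smoothness vector by the constant $q$, which holds precisely because the hypothesis is phrased in terms of the ratio $s_{\nu+1}/s_1$. The sole structural restriction is that $q$ be finite, so that Lemma \ref{wichtig} is available; the degenerate endpoint $q=\infty$ is not of interest here since it is governed by the negative result of Theorem \ref{unendl}.
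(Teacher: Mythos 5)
Your proof is correct and follows precisely the route the paper indicates for this corollary (Theorem \ref{Small1} combined with Lemmas \ref{wichtig} and \ref{back}); you have simply supplied the details the paper leaves implicit, in particular the observation that the jump ratio $s_{\nu+1}/s_1$ is invariant under dividing $\vec{s}$ by the constant $q$.
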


Similarly, Theorem \ref{Small2}, Theorem \ref{Smalle4} and  Lemmas \ref{wichtig}, \ref{back} can be used to derive the next two results.

\begin{cor}\label{Smalle2}
Let
$s_1 =  s_2 = \ldots = s_\nu <  s_{\nu+1}  \le \ldots \, \le s_d$ for some $\nu \in \N$, $1 \le \nu < \min (5,d)$.
Let $\vec{q}$ be a constant vector generated by some $q>1$.
Suppose
\[
t:= \frac{s_{\nu+1}}{s_1} \ge \max \Big(\frac 32, ~\frac{\log_2 (d-\nu)}{2}\Big)\, .
\]
For all $ n \in \N$ it holds
\[
a_n(I_d:H^{\vec{s},\vec{q}}_{\mix}(\T) \to L_2(\T)) \leq
\Big[e^4 \, \Big(\frac{\pi^2}{3} - 1\Big)^{\nu-1}\,\frac{1}{n}\Big]^{\frac{s_1}{2q}}\,.
\]
\end{cor}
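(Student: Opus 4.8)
The plan is to reduce the statement for a general constant fine-index vector $\vec{q}$ to the already established case $\vec{q}=\vec{1}$ in Theorem \ref{Small2}, in exactly the same way that Theorem \ref{smalldd}(i) was obtained from Proposition \ref{smallbbb}. First I would record that, since $\vec{q}$ is the constant vector generated by some finite $q>1$, the quotient vector $\vec{s}/\vec{q}$ is just $\tfrac1q\vec{s}=(s_1/q,\ldots,s_d/q)$. In particular it inherits the same ordering, namely $s_1/q=\cdots=s_\nu/q<s_{\nu+1}/q\le\cdots\le s_d/q$, and the jump ratio is scale invariant:
\[
\frac{s_{\nu+1}/q}{s_1/q}=\frac{s_{\nu+1}}{s_1}=t\ge\max\Big(\tfrac32,\ \tfrac{\log_2(d-\nu)}{2}\Big).
\]
Since neither $\nu$ nor the constraint $1\le\nu<\min(5,d)$ involves $\vec{q}$, the smoothness vector $\vec{s}/\vec{q}$ satisfies all hypotheses of Theorem \ref{Small2}.

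Next I would use the norm comparison of Lemma \ref{wichtig}: for $1\le q<\infty$ one has $\|f|H^{\vec{s}/\vec{q},\vec{1}}_{\mix}(\T)\|\le\|f|H^{\vec{s},\vec{q}}_{\mix}(\T)\|$, so the identity $H^{\vec{s},\vec{q}}_{\mix}(\T)\hookrightarrow H^{\vec{s}/\vec{q},\vec{1}}_{\mix}(\T)$ has norm at most $1$. Then I would apply Lemma \ref{back} with $X=H^{\vec{s},\vec{q}}_{\mix}(\T)$, $Y=H^{\vec{s}/\vec{q},\vec{1}}_{\mix}(\T)$ and $Z=L_2(\T)$ — the embedding $Y\hookrightarrow Z$ being valid because $\min_j s_j/q>0$ — to get
\[
a_n(I_d:H^{\vec{s},\vec{q}}_{\mix}(\T)\to L_2(\T))\le a_n(I_d:H^{\vec{s}/\vec{q},\vec{1}}_{\mix}(\T)\to L_2(\T))
\]
for all $n\in\N$.

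Finally I would invoke Theorem \ref{Small2} for the vector $\vec{s}/\vec{q}$, whose first component is $s_1/q$, obtaining
\[
a_n(I_d:H^{\vec{s}/\vec{q},\vec{1}}_{\mix}(\T)\to L_2(\T))\le\Big[e^4\,\Big(\tfrac{\pi^2}{3}-1\Big)^{\nu-1}\tfrac1n\Big]^{\frac{s_1/q}{2}}=\Big[e^4\,\Big(\tfrac{\pi^2}{3}-1\Big)^{\nu-1}\tfrac1n\Big]^{\frac{s_1}{2q}},
\]
which is exactly the claimed bound. I do not anticipate any genuine obstacle; the single point requiring (trivial) care is that the jump-ratio hypothesis $t\ge\max(3/2,\log_2(d-\nu)/2)$ — the only nontrivial assumption of Theorem \ref{Small2} — is unaffected by dividing the whole smoothness vector by the constant $q$, which is immediate by homogeneity.
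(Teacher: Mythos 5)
Your argument is exactly the route the paper intends: the paper states that Corollary \ref{Smalle2} follows from Theorem \ref{Small2} together with Lemmas \ref{wichtig} and \ref{back}, by the same mechanism used to deduce Theorem \ref{smalldd}(i) from Proposition \ref{smallbbb}. You have filled in the details correctly, including the key observation that the jump-ratio hypothesis is invariant under division of $\vec{s}$ by the constant $q$, so there is nothing to add.
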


\begin{cor}\label{Smalle41}
Let $d\ge 5$ and  $0 <s_1 <  s_2 \le \,  \ldots \,  \le s_d$.
Let $\vec{q}$ be a constant vector generated by some $q>1$.
For all $ n \in \N$ it holds
\[
a_n(I_d:H^{\vec{s},\vec{q}}_{\mix}(\T) \to L_2(\T)) \leq
\Big[\frac{C(s_2/s_1, d)}{n}\Big]^{\frac{s_1}{q( 1+\log_2 (d-1))}}\,,
\]
where $C(s_2/s_1, d)$ is defined in \eqref{konstante2}.
\end{cor}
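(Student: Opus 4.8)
The plan is to reduce the assertion to Theorem~\ref{Smalle4} by combining the norm comparison of Lemma~\ref{wichtig} with the monotonicity principle of Lemma~\ref{back}. Write $\vec{q}=(q,\ldots,q)$ with $1<q<\infty$ and set $\vec{t}:=\vec{s}/\vec{q}=(s_1/q,\ldots,s_d/q)$. First I would invoke Lemma~\ref{wichtig}, which says precisely that the identity $H^{\vec{s},\vec{q}}_{\mix}(\T)\hookrightarrow H^{\vec{t},\vec{1}}_{\mix}(\T)$ is continuous with norm at most one. Since $t_j>0$ for all $j$, we also have $H^{\vec{t},\vec{1}}_{\mix}(\T)\hookrightarrow L_2(\T)$, so Lemma~\ref{back}, applied with $X=H^{\vec{s},\vec{q}}_{\mix}(\T)$, $Y=H^{\vec{t},\vec{1}}_{\mix}(\T)$ and $Z=L_2(\T)$, yields
\[
a_n(I_d:H^{\vec{s},\vec{q}}_{\mix}(\T)\to L_2(\T))\le a_n(I_d:H^{\vec{t},\vec{1}}_{\mix}(\T)\to L_2(\T))
\]
for all $n\in\N$.

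Next I would check that $\vec{t}$ still satisfies the hypotheses of Theorem~\ref{Smalle4}: dividing by the fixed positive constant $q$ preserves the strict ordering $0<t_1<t_2\le\ldots\le t_d$, and, crucially, it leaves the first ratio invariant, namely $t_2/t_1=s_2/s_1$. Since $d\ge5$, applying Theorem~\ref{Smalle4} to the vector $\vec{t}$ gives
\[
a_n(I_d:H^{\vec{t},\vec{1}}_{\mix}(\T)\to L_2(\T))\le\Big[\frac{C(t_2/t_1,d)}{n}\Big]^{\frac{t_1}{1+\log_2(d-1)}}=\Big[\frac{C(s_2/s_1,d)}{n}\Big]^{\frac{s_1}{q(1+\log_2(d-1))}}\,,
\]
where in the last equality I substituted $t_1=s_1/q$ and $t_2/t_1=s_2/s_1$. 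Chaining the two displayed inequalities completes the proof.

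There is no genuine obstacle here; the argument is a short deduction from the already proven tools. The only points requiring (minimal) care are that $q$ is finite, so that Lemma~\ref{wichtig} actually applies — the case $q=\infty$ being governed instead by Theorem~\ref{unendl} — and that the constant $C(s_2/s_1,d)$ from \eqref{konstante2} depends on $\vec{s}$ only through the ratio $s_2/s_1$, which is exactly what allows it to pass through the substitution $\vec{s}\mapsto\vec{s}/q$ unchanged.
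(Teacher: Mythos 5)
Your argument is correct and is precisely the paper's own proof: the paper states that Corollary~\ref{Smalle41} follows from Theorem~\ref{Smalle4} combined with Lemmas~\ref{wichtig} and~\ref{back}, which is exactly the chain you carry out, including the observation that $t_2/t_1=s_2/s_1$ keeps the constant $C(\cdot,d)$ unchanged while $t_1=s_1/q$ produces the factor $q$ in the exponent. Your remark that $q$ must be finite for Lemma~\ref{wichtig} to apply is also well taken and implicit in the paper's formulation.
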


Also the case of non-constant vector $\vec{q}$ can be treated.
But here one has to take into account that a renumbering of the variables will not influence the approximation numbers of an embedding into $L_2 (\T)$.
Again our argument will be based on the embedding $H^{\vec{s},\vec{q}}_{\mix}(\T) \hookrightarrow
H^{\vec{s}/\vec{q}, \vec{1}}_{\mix}(\T)$, see Lemma \ref{wichtig}.
Looking at the sequence $s_1/q_1, \ldots \, , s_d/q_d$, in general, they will have no ordering.
Let us denote by $r_1, r_2, \ldots \, , r_d$ the rearranged sequence with
\[
r_1 := \min \Big\{ \frac{s_j}{q_j}:~ j=1, \ldots \, , d\Big\}\, .
\]
In addition we need the counterpart of $\nu$, defined as
\[
 \mu := \# \Big\{j \in \{1, \ldots \, , d\}:   \frac{s_j}{q_j} = r_1\Big\}\, .
\]
For simplicity we only consider the generalization of Corollary \ref{Smalle1}.

\begin{cor}\label{Smalle3}
Let
$0< s_1 \le   s_2  \le \ldots \, \le s_d$.
Let $\vec{q}$ be a finite vector such that  $\min_{j=1, \ldots \, , d} q_j >1$.
Suppose  $5 \le \mu < d$ and
\[
t:= \frac{r_{\mu+1}}{r_1} \ge \frac{\log_2 (d-\mu)}{1+ \log_2 (\mu-1)}\, .
\]
For all $ n \in \N$ it holds
\[
a_n(I_d:H^{\vec{s},\vec{q}}_{\mix}(\T) \to L_2(\T)) \leq \Big(\frac{38.02}{n}\Big)^{\frac{r_1}{q(1+\log_2 (\mu-1))}}\,.
\]
\end{cor}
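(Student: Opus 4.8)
The plan is to reduce the assertion to Theorem \ref{Small1}, which already settles the case of the constant fine index $\vec{1}$ with two smoothness levels, by means of two preliminary reductions: first trade the general fine index $\vec{q}$ for $\vec{1}$ at the price of rescaling the smoothness, and afterwards reorder the variables so that the rescaled smoothness vector becomes non-decreasing.

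First I would apply Lemma \ref{wichtig}. Since $\min_j q_j > 1$ and $\vec{q}$ is finite, we have $1 < q_j < \infty$ for every $j$, so the identity map $H^{\vec{s},\vec{q}}_{\mix}(\T) \hookrightarrow H^{\vec{s}/\vec{q},\vec{1}}_{\mix}(\T)$ has norm at most $1$. Plugging this into Lemma \ref{back} with $X = H^{\vec{s},\vec{q}}_{\mix}(\T)$, $Y = H^{\vec{s}/\vec{q},\vec{1}}_{\mix}(\T)$ and $Z = L_2(\T)$ yields
\[
  a_n(I_d: H^{\vec{s},\vec{q}}_{\mix}(\T) \to L_2(\T)) \le a_n(I_d: H^{\vec{s}/\vec{q},\vec{1}}_{\mix}(\T) \to L_2(\T))
\]
for all $n \in \N$. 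Next I would use, as recalled in the discussion preceding the statement, that the approximation numbers of an embedding into $L_2(\T)$ depend only on the multiset of weights $u_{\vec{t},\vec{1}}(k)$, $k \in \Z$, and that a simultaneous permutation of the exponents $t_j$ merely permutes the coordinates $k_j$; hence replacing $\vec{s}/\vec{q}$ by its non-decreasing rearrangement $\vec{r} = (r_1, \ldots, r_d)$ does not alter these numbers. By the definition of $r_1$ and $\mu$ the vector $\vec{r}$ satisfies $0 < r_1 = \ldots = r_\mu < r_{\mu+1} \le \ldots \le r_d$, i.e., exactly the structure of the smoothness vector required in Theorem \ref{Small1}.

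It then only remains to check the hypotheses of Theorem \ref{Small1} with $\nu$ replaced by $\mu$ and $\vec{s}$ replaced by $\vec{r}$: the assumption $5 \le \mu < d$ is in force, and $t = r_{\mu+1}/r_1 \ge \log_2(d-\mu)/(1+\log_2(\mu-1))$ is precisely condition \eqref{wn-4} for $\vec{r}$. Theorem \ref{Small1} then delivers $a_n(I_d: H^{\vec{r},\vec{1}}_{\mix}(\T) \to L_2(\T)) \le (38.02/n)^{r_1/(1+\log_2(\mu-1))}$ for all $n \in \N$, and chaining this with the two reductions above gives the claimed bound (the right-hand side here is, if anything, slightly smaller than the one in the statement, and for the small $n$ where $38.02/n \ge 1$ both bounds are trivial since $a_n \le a_1 = 1$).

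I do not foresee a genuine difficulty: the argument is an assembly of Lemma \ref{wichtig}, Lemma \ref{back}, the permutation invariance of the $L_2$-approximation numbers, and Theorem \ref{Small1}. The only point requiring care is the bookkeeping — carrying out the rearrangement of $\vec{s}/\vec{q}$ consistently, and checking that $\mu$ really is the multiplicity of the minimal ratio $r_1$, so that the strict jump $r_\mu < r_{\mu+1}$ needed in Theorem \ref{Small1} is genuinely present.
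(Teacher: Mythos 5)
Your proposal is correct and is exactly the argument the paper intends: no explicit proof is given for Corollary~\ref{Smalle3}, but the preceding text prescribes precisely your steps — Lemma~\ref{wichtig} to pass to $\vec{q}=\vec{1}$, invariance of the $L_2$-approximation numbers under a permutation of coordinates (which allows sorting $\vec{s}/\vec{q}$ into $\vec{r}$), and then Theorem~\ref{Small1} with $\nu$ replaced by $\mu$. The exponent you derive, $r_1/(1+\log_2(\mu-1))$ without the extra factor $q$, is in fact the natural one — since $\vec{q}$ is non-constant the ``$q$'' in the paper's displayed exponent appears to be a typographical carry-over from Corollary~\ref{Smalle1}, and your bound is the sharper, unambiguous form.
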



\subsection{The preasymptotic decay for logarithmically growing $\vec{s}$}
\label{numberc}


In this subsection we consider vectors $\vec{s}$ with strongly increasing components, more exactly, we shall investigate
\[
 s_j := 1+ \beta \, \log_2 j\, , \qquad j \in \N\, .
\]
Here $\beta>0$ will be chosen later on.
This time we will not try to determine the optimal rate, we will be satisfied with an estimate
completely independent of $d$.


\paragraph{The case $\vec{q} = \vec{1}$.}


Let $\alpha$ be any number $>\frac{1}{\beta}$.
Lemma \ref{clever}, combined with \eqref{wn-2} and the trivial inequality
$1+x\le e^x$, $x \ge 0$, yields
\beqq
c(r,d) & \le &  A_\alpha \, r^\alpha\,  \prod_{j=2}^d \Big( 2 \zeta (\alpha s_j)-1  \Big)
\le  A_\alpha\,  r^{\alpha}\, \prod_{j=2}^d \Big( 2 \zeta (\alpha + \alpha \, \beta \, \log_2 j)-1  \Big)
\\
& \le & A_\alpha \, r^{\alpha} \, \prod_{j=2}^d \Big(1 + 2^{-(\alpha + \alpha\, \beta \,  \log_2 j)}\, \Big( 2+ \frac{4}{\alpha + \alpha \beta \, \log_2 j -1} \Big)\Big)
\\
& \le & A_\alpha \, r^{\alpha} \,  e^{(6/2^{\alpha}) \, \sum_{j=2}^d \frac{1}{j^{\alpha\, \beta}}} 
\, .
\eeqq
We observe that
\[
\sum_{j=2}^d  \frac{1}{j^{\alpha\, \beta}}  =   (\zeta (\alpha\, \beta)-1) <\infty
\] 
because of $ \alpha \, \beta >1$.
For brevity we put
\be\label{konstante4}
C_{\alpha,\beta} := \frac{6}{2^{1/\beta}}\,  (\zeta (\alpha\, \beta)-1) \, .
\ee
This implies the following.

\begin{satz}\label{smallcdf}
Let $d\ge 2$ and suppose
\[
 s_j \ge (1+ \beta \, \log_2 j) \, s_1\, , \qquad j \in \N\, ,
\]
for some $\beta >0$. Let $\alpha > 1/\beta$.
Then it holds
\be\label{ws-46efg}
a_n(I_d:H^{\vec{s},\vec{1}}_{\mix}(\T) \to L_2(\T)) \leq \Big(\frac{A_\alpha \, e^{C_{\alpha,\beta}}}{n}\Big)^{\frac{s_1}{\alpha}}
\ee
for all $n\in \N$. Here $C_{\alpha,\beta}$ is defined in \eqref{konstante4}.
\end{satz}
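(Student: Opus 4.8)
The plan is to reduce to the normalized case $s_1=1$ by means of the scaling identity \eqref{eq073}, to estimate the counting function $c(r,d)$ through Lemma \ref{clever}, and then to feed this estimate into the elementary conversion \eqref{neu1ab}. Writing $\vec t := \vec s/s_1$, we have $t_1 = 1$ and $t_j = s_j/s_1 \ge 1+\beta\log_2 j$ for every $j$; by \eqref{eq073} it suffices to prove
\[
  a_n(I_d:H^{\vec t,\vec 1}_{\mix}(\T)\to L_2(\T)) \le \Big(\frac{A_\alpha\, e^{C_{\alpha,\beta}}}{n}\Big)^{1/\alpha}, \qquad n\in\N,
\]
and then raise to the power $s_1$. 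Since $1+|k_j|\ge 1$, the pointwise inequalities $t_j\ge 1+\beta\log_2 j$ give the inclusion $\{k\in\Z:\prod_j(1+|k_j|)^{t_j}\le r\}\subseteq\{k\in\Z:\prod_j(1+|k_j|)^{1+\beta\log_2 j}\le r\}$, hence
\[
  c(r,d) \le C\big(r,(1+\beta\log_2 j)_{j=1}^d,\vec 1\big).
\]
The right-hand side is the counting function of the non-decreasing vector $(1+\beta\log_2 j)_j$ with first entry $1$, so Lemma \ref{clever} applies with the chosen exponent $\alpha>1/\beta$ and yields $c(r,d)\le A_\alpha\,r^\alpha\prod_{j=2}^d\big(2\zeta(\alpha+\alpha\beta\log_2 j)-1\big)$ for all $r\ge 1$.

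The core of the argument is to bound this product uniformly in $d$. Here I would apply the elementary estimate \eqref{wn-2} to each factor exactly as in the computation preceding the statement: since $2^{-(\alpha+\alpha\beta\log_2 j)}=2^{-\alpha}j^{-\alpha\beta}\le 2^{-1/\beta}j^{-\alpha\beta}$ and the bracket $2+\tfrac{4}{\alpha+\alpha\beta\log_2 j-1}$ stays bounded (by $6$) over $j\ge 2$, one obtains $2\zeta(\alpha+\alpha\beta\log_2 j)-1\le 1+6\cdot 2^{-1/\beta}j^{-\alpha\beta}$. Using $1+x\le e^x$ and summing,
\[
  \prod_{j=2}^d\big(2\zeta(\alpha+\alpha\beta\log_2 j)-1\big) \le \exp\Big(\frac{6}{2^{1/\beta}}\sum_{j=2}^d j^{-\alpha\beta}\Big) \le \exp\Big(\frac{6}{2^{1/\beta}}(\zeta(\alpha\beta)-1)\Big) = e^{C_{\alpha,\beta}},
\]
where the series converges precisely because $\alpha\beta>1$. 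This is the only place the hypothesis $\alpha>1/\beta$ enters, and it is exactly what removes all $d$-dependence, leaving $c(r,d)\le A_\alpha\,e^{C_{\alpha,\beta}}\,r^\alpha$ for every $r\ge1$ and every $d$.

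It remains to translate this into the claimed bound on $a_n$. With the notation of Lemma \ref{an1}, fix $n\ge2$ and let $m$ be such that $c(\vartheta_m,d)<n\le c(\vartheta_{m+1},d)$; then $a_n=1/\vartheta_{m+1}$ and $n\le c(\vartheta_{m+1},d)\le A_\alpha\,e^{C_{\alpha,\beta}}\,\vartheta_{m+1}^{\alpha}=A_\alpha\,e^{C_{\alpha,\beta}}\,a_n^{-\alpha}$, i.e.\ $a_n\le(A_\alpha\,e^{C_{\alpha,\beta}}/n)^{1/\alpha}$; for $n=1$ this is trivial since $a_1=1$ while $A_\alpha\ge1$ and $C_{\alpha,\beta}\ge0$. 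Undoing the normalization via \eqref{eq073} gives \eqref{ws-46efg}. The one genuinely delicate point is the uniform-in-$d$ estimate of the product in the second paragraph, which hinges on the summability $\sum_j j^{-\alpha\beta}<\infty$; everything else is bookkeeping already set up in Section \ref{numbersmall}.
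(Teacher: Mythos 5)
Your proof follows the paper's argument exactly: normalize to $s_1=1$ via \eqref{eq073}, invoke Lemma \ref{clever}, bound each zeta-factor with \eqref{wn-2}, use $1+x\le e^x$ to pass to the exponential, and convert back to $a_n$ via Lemma \ref{an1} and \eqref{neu1ab}. The only cosmetic differences are that you replace the smoothness vector $\vec t$ by $(1+\beta\log_2 j)_j$ at the set-inclusion level where the paper uses the monotonicity of $\zeta$ inside the product (equivalent reformulations), and that you make explicit the step $2^{-\alpha}\le 2^{-1/\beta}$, which in the paper is folded silently into the definition \eqref{konstante4} of $C_{\alpha,\beta}$.
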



\paragraph{The case $\vec{q} \neq \vec{1}$.}


Again we shall work with Lemmas \ref{wichtig}, \ref{back}.
Then, as a consequence of  Theorem \ref{smallcdf} we find the following generalization.

\begin{cor}\label{smallecd2}
Let $d\ge 2$ and suppose
\[
 s_j \ge (1+ \beta \, \log_2 j) \, s_1\, , \qquad j \in \N\, ,
\]
for some $\beta >0$. Let $\alpha > 1/\beta$.
Let $\vec{q}$ be a constant vector generated by a finite $q>1$.
Then it holds
\[
a_n(I_d:H^{\vec{s},\vec{q}}_{\mix}(\T) \to L_2(\T)) \leq \Big(\frac{A_\alpha \,  e^{C_{\alpha,\beta}}}{n}\Big)^{\frac{s_1}{\alpha q}}
\]
for all $n\in \N$. Here $C_{\alpha,\beta}$ is defined in \eqref{konstante4}.
\end{cor}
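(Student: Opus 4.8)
The plan is to reduce the claim to the already established case $\vec q = \vec 1$, i.e.\ to Theorem~\ref{smallcdf}, by exploiting the norm inequality from Lemma~\ref{wichtig}. Since $\vec q = (q,\ldots,q)$ with $q>1$ finite, we have $\vec s/\vec q = (s_1/q,\ldots,s_d/q)$, and Lemma~\ref{wichtig} tells us that the embedding $H^{\vec s,\vec q}_{\mix}(\T) \hookrightarrow H^{\vec s/\vec q,\vec 1}_{\mix}(\T)$ has norm at most $1$. Hence Lemma~\ref{back}, applied with $X = H^{\vec s,\vec q}_{\mix}(\T)$, $Y = H^{\vec s/\vec q,\vec 1}_{\mix}(\T)$ and $Z = L_2(\T)$, yields
\[
 a_n\big(I_d: H^{\vec s,\vec q}_{\mix}(\T)\to L_2(\T)\big) \le a_n\big(I_d: H^{\vec s/\vec q,\vec 1}_{\mix}(\T)\to L_2(\T)\big)
\]
for all $n\in \N$.

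Next I would check that $\vec s/\vec q$ satisfies the hypothesis of Theorem~\ref{smallcdf} with the same $\beta$. Dividing the assumed inequality $s_j \ge (1+\beta\log_2 j)\,s_1$ by $q$ gives $s_j/q \ge (1+\beta\log_2 j)\,(s_1/q)$ for all $j\in \N$, so the logarithmic growth condition is preserved, now with leading coefficient $s_1/q$. Note also that the constants $A_\alpha$ (see \eqref{Konstante}) and $C_{\alpha,\beta}$ (see \eqref{konstante4}) depend only on $\alpha$ and $\beta$, not on the size of the smoothness vector, so they are unchanged under this rescaling.

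Applying Theorem~\ref{smallcdf} to $\vec s/\vec q$ with the same $\alpha>1/\beta$ therefore gives
\[
 a_n\big(I_d: H^{\vec s/\vec q,\vec 1}_{\mix}(\T)\to L_2(\T)\big) \le \Big(\frac{A_\alpha\,e^{C_{\alpha,\beta}}}{n}\Big)^{\frac{s_1/q}{\alpha}} = \Big(\frac{A_\alpha\,e^{C_{\alpha,\beta}}}{n}\Big)^{\frac{s_1}{\alpha q}}\,,
\]
and combining this with the previous display finishes the proof. There is essentially no technical obstacle here: the whole argument is a short chaining of Lemma~\ref{wichtig}, Lemma~\ref{back} and Theorem~\ref{smallcdf}, in exactly the same spirit as the deduction of Corollary~\ref{Smalle1} from Theorem~\ref{Small1}. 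The only point needing a (trivial) verification is that the reindexing $\vec s \mapsto \vec s/\vec q$ neither destroys the growth hypothesis nor alters the constants $A_\alpha$ and $C_{\alpha,\beta}$.
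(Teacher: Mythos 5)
Your proof is correct and follows exactly the route the paper intends (the paper only states ``Again we shall work with Lemmas \ref{wichtig}, \ref{back}... as a consequence of Theorem \ref{smallcdf}''): embed $H^{\vec s,\vec q}_{\mix}$ into $H^{\vec s/\vec q,\vec 1}_{\mix}$ via Lemma \ref{wichtig}, use Lemma \ref{back}, observe that the rescaled vector $\vec s/\vec q$ satisfies the same logarithmic growth hypothesis with leading coefficient $s_1/q$, and apply Theorem \ref{smallcdf}. The verification that $A_\alpha$ and $C_{\alpha,\beta}$ are unaffected by the rescaling is a nice explicit touch.
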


\begin{rem}
 \rm
Our estimates in this subsection result in polynomial error bounds independent of $d$.
In other words, if the smoothness components grow moderately then $d$ does not influence the error bounds. 
To be more precise, like in the slightly different situation discussed in \cite{PW}, we 
also have here {\em strong polynomial tractability} if and only if 
\begin{equation}\label{limsup}
	\limsup\limits_{j\to\infty} \frac{\ln j}{s_j} < \infty\,.
\end{equation}
We restrict ourselves to the case $\vec{q} = \vec{1}$. Indeed, according to the fundamental 
result in \cite[Thm.\ 5.1]{NoWo08} strong polynomial tractability holds if and only if
$$
	\sup\limits_{d} \sum\limits_{n=1}^{\infty} a_{n,d}^{2\tau} < \infty\quad\text{for some } \tau >0\,,
$$
where $a_{n,d} = a_{n,d}(I_d)$ denotes the $n$-th approximation number of the respective $d$-dimensional problem. Then the index $p$ on the bottom of page 5 above (equivalence result in \cite{PW}) can be chosen 
as $1/2 \tau$. One is therefore interested in small $\tau$'s. In our specific situation ($\vec{q} = \vec{1}$) 
this condition can be rephrased as 
$$
	\sup\limits_{d\in\N} \sum\limits_{k} \prod\limits_{j=1}^d (1+|k_j|)^{-2s_j\tau} = 
	\sup\limits_{d\in\N}\prod\limits_{j=1}^d (2\zeta(2\tau s_j)-1)  <\infty
$$
for some $\tau>0$ such that $\delta := \inf\limits_{i}2\tau s_i > 1$. By our estimate in \eqref{wn-2} we have  
$$
	1+2^{-t} \leq 2\zeta(t)-1 \leq 1+2^{-t}(2+4/(t-1))
$$
and hence
\begin{equation}\label{sum}
				\sup\limits_{d}\prod\limits_{j=1}^d (2\zeta(2\tau s_j)-1) < \infty \Longleftrightarrow \sum\limits_{j = 1}^{\infty} 2^{-2\tau s_j}<\infty\,.
\end{equation}
We are now in the same situation as discussed in \cite[p. 416]{PW}. There it is proved that the finiteness of the sum 
in \eqref{sum} for some $\tau >0$ is equivalent to \eqref{limsup}. The optimal tractability index is then $p^* = 1/2\tau^*$ 
with $\tau^*$ being the infimum over all $\tau$ such that \eqref{sum} is finite. This corresponds to our decay rates above. 
Note that we additionally give precise numerical values for the involved constants in the error bounds. 
Note also that in case  $\beta \downarrow 0$ the rate $s_1/\alpha \to 0$ which represents another illustration of the above 
characterization of polynomial tractability given in \cite{PW}. 
\end{rem}


\section{Embeddings into the energy space}
\label{mixediso}


In this last section we shall investigate the preasymptotic behavior of the approximation numbers of the embeddings 
$$I_d:~H^s_\mix (\T) \to H^1(\T)\qquad(s>1,d\in\N)\,.$$
Here we use the short notation $H^s_\mix (\T)$ for the space $H^{\vec{s},\vec{q}}_\mix (\T)$ with
constant smoothness vector $\vec{s}=(s,\ldots,s)$ and constant vector $\vec{q}=(2,\ldots,2)$. The space
$H^1(\tor^d)$ belongs to the scale of isotropic Sobolev spaces $H^s(\tor^d)$ of fractional order $s>0$, endowed with the norm
$$
	\|f|H^s(\tor^d)\| := \Big[\sum\limits_{k\in \Z}|c_k(f)|^2
	\Big(1+\sum\limits_{j=1}^d|k_j|^2\Big)^s\Big]^{1/2}.
$$
The asymptotic order of the decay of the approximation numbers is well-known, see, e.g., \cite{GrKn08}, \cite{DiUl13}, \cite{ByDiWiUl14}. It holds
\begin{equation}\label{asym}
	c_s(d)\,n^{-(s-1)}\le a_n(I_d) \le C_s(d)\, n^{-(s-1)}
\end{equation}
for all $n\in\N$, with constants depending on $s$ and $d$ but not on $n$. 
To the best of our knowledge, the only preasymptotic estimate so far is 
given in \cite{KMU}. We will use a different technique here. By analogous arguments as in Subsection \ref{sing_diag} we see that
$$
	a_n(I_d:H^{s}_{\mix}(\tor^d) \to L_2(\tor^d)) = a_n(D_w:\ell_2(\Z)\to \ell_2(\Z)),
$$
where the diagonal operator $D_w:\ell_2(\Z) \to \ell_2(\Z)$ is defined via the weight
\begin{equation}\label{f3}
	w(k) :=  \frac{\Big(1+\sum_{j=1}^d |k_j|^2\Big)^{1/2}}{\prod\limits_{j=1}^d (1+|k_j|^2)^{s/2}}\quad,\quad k\in \Z\,.
\end{equation}
In order to find preasymptotic estimates for the decay of the non-increasing rearrangement of this weight we use a point-wise larger weight, namely
\begin{equation}
	w(k) \leq   \frac{\prod\limits_{j=1}^d (1+|k_j|^2)^{1/2}}{\prod\limits_{j=1}^d (1+|k_j|^2)^{s/2}} = \prod\limits_{j=1}^d (1+|k_j|^2)^{-(s-1)/2} =: \tilde{w}(k)\,.
\end{equation}
Then we have
\begin{equation}\label{H1vsH1mix}
	a_n(D_w) \leq a_n(D_{\tilde{w}}) = a_n(id: H^{s-1}_\mix (\T) \to L_2(\T))\,.
\end{equation}
Applying Theorem \ref{smalldd} we obtain the following result.

\begin{prop}\label{main0} Let $s>1$, $d\geq 3$ and $n\ge 2$. Then
\be\label{w-neu}
a_n(I_d:H^{s}_{\mix}(\T) \to H^1(\T)) \leq
\Big(\frac{C(d)}{n}\Big)^{\frac{s-1}{2\, (1+\log_2(d-1))}}\,,
\ee
where $C(d)$ is defined in \eqref{konstante} and ranges in the interval $[e ,6.25]$ depending on $d$.
\end{prop}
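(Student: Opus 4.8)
The plan is to assemble the statement from the two reductions already displayed just before it, finishing with an application of Theorem \ref{smalldd}. First I would recall that, by exactly the same argument as in Subsection \ref{sing_diag}, the embedding $I_d: H^s_{\mix}(\T)\to H^1(\T)$ is, up to isometries, the diagonal operator $D_w$ on $\ell_2(\Z)$ with weight $w(k)$ from \eqref{f3}, so that $a_n(I_d: H^s_{\mix}(\T)\to H^1(\T)) = a_n(D_w)$. Using the elementary inequality $1+\sum_{j=1}^d|k_j|^2\le \prod_{j=1}^d(1+|k_j|^2)$ one gets the pointwise bound $w(k)\le \tilde w(k):=\prod_{j=1}^d(1+|k_j|^2)^{-(s-1)/2}$; since larger weights have larger non-increasing rearrangements, Lemma \ref{sing} gives \eqref{H1vsH1mix}, i.e.
\[
a_n(D_w)\le a_n(D_{\tilde w}) = a_n(\mathrm{id}: H^{s-1}_{\mix}(\T)\to L_2(\T))\,,
\]
where by the notational convention fixed at the start of Section \ref{mixediso} the space $H^{s-1}_{\mix}(\T)$ carries the $\vec q=\vec 2$ norm.

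The second step is to invoke Theorem \ref{smalldd}(i) with smoothness parameter $s-1>0$ (legitimate because $s>1$), with $q=2\ge 1$, and in dimension $d\ge 3$. This yields directly
\[
a_n(\mathrm{id}: H^{s-1}_{\mix}(\T)\to L_2(\T))\le \Big(\frac{C(d)}{n}\Big)^{\frac{s-1}{2(1+\log_2(d-1))}}\,,\qquad n\ge 2\,,
\]
with $C(d)$ the constant defined in \eqref{konstante}. Combining this with the previous display proves \eqref{w-neu}.

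It remains to justify the range of $C(d)$, which follows from the Remark after Proposition \ref{smallbbb}: the sequence $(C(d))_{d\ge 3}$ is strictly decreasing with $\lim_{d\to\infty}C(d)=e$, and its maximum over $d\ge 3$ is $C(3)=6.25$ (cf.\ the table there), so $C(d)\in[e,6.25]$. I do not expect any genuine obstacle; the only point needing care is the bookkeeping when applying Theorem \ref{smalldd} — namely that after majorizing the non-tensor-product energy weight by the tensor-product weight $\tilde w$, the problem reduces to the $L_2$-embedding of a mixed-smoothness space of order $s-1$ normalized with $\vec q=\vec 2$, which is precisely the normalization for which $C(d)$ in \eqref{konstante} was derived.
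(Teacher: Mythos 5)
Your proposal is correct and reproduces the paper's own argument: reduce to the diagonal operator $D_w$ with the weight from \eqref{f3}, majorize the non-tensor-product weight $w(k)$ pointwise by the tensor-product weight $\tilde w(k)$ using $1+\sum_j |k_j|^2\le\prod_j(1+|k_j|^2)$ to obtain \eqref{H1vsH1mix}, and then invoke Theorem \ref{smalldd}(i) with smoothness $s-1$ and $q=2$. The justification of $C(d)\in[e,6.25]$ from the remark and table following Proposition \ref{smallbbb} is also exactly as the paper intends.
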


\begin{rem}\rm (i) This result is already an improvement over the one in \cite[(8)]{KMU}. 
There two of the authors and S. Mayer gave a similar estimate with a slightly worse exponent, 
the constant $C = e^2$ and the range $1\leq n \leq 4^d$.

(ii) Proposition \ref{main0} directly implies that the corresponding family of approximation problems 
$(I_d:H^{s}_{\mix}(\T) \to H^1(\T))_{d\in\N}$
 is \emph{quasi-polynomially tractable} in the sense of Gnewuch and Wo{\'z}niakowski \cite{GnWo11}. See also \cite{DiUl13} and the comment after formula (6) in \cite{KMU}.
\end{rem}

\noindent This result can be further improved by using a different technique which does not 
require the rearrangement of $(w(k))_{k\in \Z}$. Note that this technique is not new, 
see for instance \cite[Thm.\ 5.1 and (5.2)]{NoWo08}\,.

\begin{prop}\label{main1} Let $s>1$, $d\geq 4$ and $n\geq 8$.Then 
$$
	a_n(I_d:H^{s}_{\mix}(\tor^d) \to H^1(\tor^d)) \leq
	\Big(\frac{e^2}{n}\Big)^{\frac{s-1}{2\log_2 d}}\,.
$$
\end{prop}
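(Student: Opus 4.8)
The plan is to bypass the (delicate) rearrangement of $(w(k))_{k\in\Z}$ and instead use the classical observation that a diagonal operator whose weight lies in $\ell_p$ has approximation numbers controlled by the $\ell_p$-norm of the weight — this is the device recalled in \cite[Thm.\ 5.1 and (5.2)]{NoWo08}. Concretely, if $v=(v(k))_{k\in J}$ is a positive weight tending to $0$ with non-increasing rearrangement $(v_n)_n$, then for every $0<p<\infty$ with $v\in\ell_p(J)$ one has $n\,v_n^p\le\sum_{j=1}^n v_j^p\le\sum_{k\in J}v(k)^p$, and therefore, since $a_n(D_v)=v_n$ by Lemma \ref{sing},
\[
a_n(D_v:\ell_2(J)\to\ell_2(J))\le\Big(\frac1n\sum_{k\in J}v(k)^p\Big)^{1/p},\qquad n\in\N.
\]
Thus everything reduces to choosing $p$ and estimating a single $\ell_p$-sum.

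Starting from \eqref{H1vsH1mix}, which already provides $a_n(I_d:H^s_\mix(\T)\to H^1(\T))\le a_n(D_w)\le a_n(D_{\tilde w})$ with the tensor weight $\tilde w(k)=\prod_{j=1}^d(1+|k_j|^2)^{-(s-1)/2}$, I would write $\tilde w=v^{s-1}$ for $v(k):=\prod_{j=1}^d(1+|k_j|^2)^{-1/2}$, use Lemma \ref{scale} to pass to $a_n(D_v)$, and apply the $\ell_p$-bound with $p:=2\log_2 d$. This exponent is chosen so that $1/p=1/(2\log_2 d)$, which after the final $(s-1)$-th power produces exactly the rate $n^{-(s-1)/(2\log_2 d)}$. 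Since $v$ is a product, the sum factorizes:
\[
\sum_{k\in\Z}v(k)^{p}=\prod_{j=1}^d\Big(\sum_{\ell\in\zz}(1+|\ell|^2)^{-\log_2 d}\Big)=\Big(1+2\sum_{m=1}^\infty(1+m^2)^{-\log_2 d}\Big)^{d}=:B_d,
\]
which is finite because $\log_2 d>\tfrac12$ for $d\ge4$.

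The heart of the proof is then the purely numerical claim $B_d\le e^2$ for $d\ge4$, and this is where the hypothesis $d\ge4$ enters, through $\log_2 d\ge2$. Splitting off the term $m=1$ gives $2\sum_{m\ge1}(1+m^2)^{-\log_2 d}=\tfrac2d+\eta_d$ with $\eta_d:=2\sum_{m\ge2}(1+m^2)^{-\log_2 d}$; since $(1+m^2)^{-(\log_2 d-2)}\le5^{-(\log_2 d-2)}$ for every $m\ge2$ (here $\log_2 d-2\ge0$ is used), one gets $\eta_d\le C\,d^{-\log_2 5}$ for an explicit constant $C<3$, hence $\eta_d\le2/d^2$ for $d\ge4$. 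Combined with $e^{2/d}-1-\tfrac2d\ge\tfrac2{d^2}$, this yields $1+2\sum_{m\ge1}(1+m^2)^{-\log_2 d}\le e^{2/d}$, i.e. $B_d\le e^2$. Feeding this back into the $\ell_p$-bound gives $a_n(D_v)\le(e^2/n)^{1/(2\log_2 d)}$, and raising to the power $s-1$ via Lemma \ref{scale} produces
\[
a_n(I_d:H^s_\mix(\tor^d)\to H^1(\tor^d))\le a_n(D_{\tilde w})=a_n(D_v)^{s-1}\le\Big(\frac{e^2}{n}\Big)^{\frac{s-1}{2\log_2 d}},
\]
which is the assertion; in fact the argument delivers this for all $n\in\N$, so the restriction $n\ge8$ in the statement is not essential to this line of reasoning. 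The one genuinely delicate point is the estimate $B_d\le e^2$: for moderately large $d$ it is comfortable, since $2\sum_{m\ge1}(1+m^2)^{-\log_2 d}\approx2/d$ and $(1+2/d)^d<e^2$, but for $d=4,5$ the two sides are close, so the constants must be carried rather than discarded.
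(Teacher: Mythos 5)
Your proof is correct and is essentially the paper's argument: both bound $a_n$ via the $\ell_p$-tail estimate $n\,a_n^{p}\le\sum_k w(k)^{p}$ applied to the tensorized weight $\tilde w$ with $p$ chosen proportional to $\log_2 d$, then show the resulting $d$-fold product $\bigl(1+2\sum_{m\ge1}(1+m^2)^{-\log_2 d}\bigr)^d$ is at most $e^2$ using $1+x+x^2/2\le e^x$ with $x=2/d$ and a crude bound on the $m\ge2$ tail. Your reparametrization through $v=\tilde w^{1/(s-1)}$ and Lemma \ref{scale} is only a cosmetic variant of the paper's direct choice $p=\frac{\log_2 d}{s-1}$, and you are right that the argument in fact yields the bound for every $n\in\N$, so the hypothesis $n\ge8$ is not needed.
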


\begin{proof} 
The main idea is to compare (quasi-)norms in Schatten $p$-classes. Let $p:= \frac{\log_2 d}{s-1}$, and let $w(k)$ and $\tilde{w}(k)$ be as before. Then
\beqq
n a_n (I_d)^{2p} & \le\sum \limits^\infty_{n=1} a_n (I_d)^{2p} = \sum \limits^\infty_{n=1} a_n(D_w)^{2p} \le\sum \limits^\infty_{n=1} a_n (D_{\tilde{w}})^{2p}
\\
&  = 
\sum \limits_{k \in \zz^d} \tilde{w}(k)^{2p} = \left(1+2 \sum \limits^\infty_{k=1} (1+k^2)^{-(s-1)p}\right)^d\,.
\eeqq
Now setting $x : = \frac{2}{2^{(s-1)p}} = \frac{2}{d}$, we have $\frac{x^2}{2} = \frac{2}{4^{(s-1)p}}$ and
\beqq
1 + 2 \sum \limits^\infty_{k=1} \frac{1}{(1+k^2)^{(s-1)p}} & = & 1 + \frac{2}{2^{(s-1)p}} + \frac{2}{4^{(s-1)p}} \sum \limits^\infty_{k=2} \left(\frac{4}{1+k^2}\right)^{(s-1)p}
\\
& \le & 1 + x + \frac{x^2}{2} \cdot \sum \limits^\infty_{k=2} \left(\frac{4}{1+k^2}\right)^2 \,, \mbox{ since } \, (s-1)p = \log_2 d \ge 2\,.
\eeqq
Moreover,
\beqq
\sum \limits^\infty_{k=2} \left(\frac{4}{1+k^2}\right)^2 & = & \left(\frac{4}{5}\right)^2 + \left(\frac{4}{10}\right)^2 + \sum \limits^\infty_{k=4} \left(\frac{4}{k^2+1}\right)^2
\\
& \le & \frac{4}{5} + 16 \int \limits^\infty_3 \frac{dx}{x^4} = \frac{4}{5} + \frac{16}{81} \le 1\,.
\eeqq
Altogether this implies
$$
n a_n (I_d)^{2p} \le \left(1 + x + \frac{x^2}{2}\right)^d \le e^{xd} = e^2\,,
$$
and therefore
$$
a_n (I_d) \le \left(\frac{e^2}{n}\right)^{\frac{1}{2p}} = \left(\frac{e^2}{n}\right)^{\frac{s-1}{2 \log_2 d}}\,.
$$
\end{proof}

\begin{rem}\rm We have actually proved a stronger result, namely that 
$$
 	a_n(I_d:H^{s}_{\mix}(\tor^d) \to H^1_{\mix}(\tor^d)) \leq
	\Big(\frac{e^2}{n}\Big)^{\frac{s-1}{2\log_2 d}}\,,
$$
which, in turn, improves on Theorem \ref{smalldd} in case $q=2$.
\end{rem}

The following proposition will finally use the specific structure of the energy space in terms of a different estimate for $w(k)$ and gives an improvement over Proposition \ref{main1} in some situations (see Remark \ref{finalrem} below).

\begin{prop}\label{main2} Let $s>1$ and $d\in \N$ such that $d\geq 1+\max\{2^{s-1}, 2^{1/(s-1)}\}$. Then, with $$
	C(d) = e\,(2.154 + 3/d)
$$
we have for all $n\in\N$
$$
	a_n(I_d:H^{s}_{\mix}(\tor^d) \to H^1(\tor^d))
	\leq \sqrt{d}\Big(\frac{C(d)}{n}\Big)^{\frac{s}{2(1+\log_2(d-1))}}\,.
$$
\end{prop}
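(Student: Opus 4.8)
The plan is to run the Schatten-class argument from the proof of Proposition~\ref{main1}, but with a sharper treatment of the weight \eqref{f3} that retains the full smoothness $s$ in all but one coordinate.

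As in the reduction preceding Proposition~\ref{main0}, I would first pass to the diagonal operator $D_w:\ell_2(\Z)\to\ell_2(\Z)$ with
\[
	w(k)=\frac{\bigl(1+\sum_{j=1}^d|k_j|^2\bigr)^{1/2}}{\prod_{j=1}^d(1+|k_j|^2)^{s/2}}\,,
\]
so that $a_n(I_d)=a_n(D_w)$. Since $(a_n(D_w))_{n\ge1}$ is the non-increasing rearrangement of $(w(k))_{k\in\Z}$, for every $p>0$ one has the Schatten-type inequality
\[
	n\,a_n(D_w)^{2p}\le\sum_{m=1}^\infty a_m(D_w)^{2p}=\sum_{k\in\Z}w(k)^{2p}\,.
\]

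The heart of the proof is the estimate of $\sum_{k\in\Z}w(k)^{2p}$. In Propositions~\ref{main0} and~\ref{main1} the numerator is bounded pointwise by $\prod_j(1+|k_j|^2)$, which costs one order of smoothness in \emph{every} coordinate; instead I would use the sharper inequality $1+\sum_{j=1}^d|k_j|^2\le\sum_{j=1}^d(1+|k_j|^2)$ (valid because $d\ge1$) together with the convexity bound $\bigl(\sum_{j=1}^d a_j\bigr)^p\le d^{p-1}\sum_{j=1}^d a_j^p$ for $p\ge1$. Inserting this and exploiting the tensor-product structure of the denominator, the sum over $\Z$ factorizes, and by symmetry
\[
	\sum_{k\in\Z}w(k)^{2p}\le d^{p-1}\sum_{j=1}^d\ \sum_{k\in\Z}\frac{(1+|k_j|^2)^p}{\prod_{i=1}^d(1+|k_i|^2)^{sp}}=d^{p}\,R\,Q^{d-1}\,,
\]
where $R:=\sum_{\ell\in\zz}(1+\ell^2)^{-(s-1)p}$ and $Q:=\sum_{\ell\in\zz}(1+\ell^2)^{-sp}$. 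Now I would choose $p:=\dfrac{1+\log_2(d-1)}{s}$, the value for which $\tfrac{1}{2p}=\tfrac{s}{2(1+\log_2(d-1))}$ is exactly the exponent in the statement. The two hypotheses on $d$ are tailored to this choice: $d\ge1+2^{s-1}$ is equivalent to $p\ge1$ (needed for the convexity step), and $d\ge1+2^{1/(s-1)}$ is equivalent to $(s-1)p\ge1$, which guarantees convergence of $R$ (and in fact $R\le\sum_{\ell\in\zz}(1+\ell^2)^{-1}$). Taking the $\tfrac{1}{2p}$-th power and pulling $d^p$ out of it produces the factor $d^{1/2}=\sqrt d$ \emph{exactly}, leaving
\[
	a_n(I_d)\le\sqrt d\,\Bigl(\frac{R\,Q^{d-1}}{n}\Bigr)^{\frac{s}{2(1+\log_2(d-1))}}\,,\qquad n\in\N\,.
\]

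It then remains to verify $R\,Q^{d-1}\le e\,(2.154+3/d)$. Since $sp=1+\log_2(d-1)$ by construction, the $\ell=1$ summand of $Q$ equals $2\cdot2^{-sp}=\tfrac1{d-1}$, and estimating $(1+\ell^2)^{-sp}\le\ell^{-2}(d-1)^{-2}$ for $\ell\ge2$ gives $Q\le1+\tfrac1{d-1}+O\bigl((d-1)^{-2}\bigr)$, hence $Q^{d-1}\le e\cdot\bigl(1+O(1/d)\bigr)$; this accounts for the factor $e$. For $R$ one combines the exact value $2^{-(s-1)p}$ of its $\ell=1$ term with the elementary estimate \eqref{wn-2} (or simply with $(1+\ell^2)^{-(s-1)p}\le(1+\ell^2)^{-1}$ for $\ell\ge2$ via $(s-1)p\ge1$) to obtain a bound uniform in $s$. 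I expect the main obstacle to be precisely this last step: matching the explicit closed form $e(2.154+3/d)$ requires tracking the $O(1/d)$ corrections in $Q^{d-1}$ and the residual $s$-dependence of $R$ with some care, and may well call for a slightly finer bound on the numerator than $1+\sum_j|k_j|^2\le\sum_j(1+|k_j|^2)$ — for instance, isolating the coordinate of largest modulus so that only $d-1$ coordinates enter the convexity step — in order to absorb the stray constants cleanly into $2.154+3/d$.
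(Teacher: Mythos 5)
Your overall scheme — the Schatten-type inequality $n\,a_n^{2p}\le\sum_m a_m^{2p}$, the choice $p=\frac{1+\log_2(d-1)}{s}$ so that $\frac{1}{2p}$ is exactly the stated exponent, the coordinate-wise factorization, and the identification of the two hypotheses on $d$ with $p\ge1$ and $(s-1)p\ge1$ — is exactly what the paper does. But the step where you bound the numerator by $1+\sum_j|k_j|^2\le\sum_j(1+|k_j|^2)$ introduces a genuine loss that your argument does not recover. Tracking your bound through, you obtain
\[
n\,a_n^{2p}\le d^{p}\,R\,Q^{d-1},\qquad R=\sum_{\ell\in\zz}(1+\ell^2)^{-(s-1)p},
\]
and since $(s-1)p\ge1$ gives only $2^{-(s-1)p}\le\tfrac12$, the $\ell=0$ and $\ell=\pm1$ terms of $R$ already contribute up to $1+1=2$, so $R\le 4.154$ is essentially sharp in your framework. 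Combined with $Q^{d-1}\le e$ this yields a constant around $4.154\,e$, roughly twice the claimed $e\,(2.154+3/d)$; the deficit does \emph{not} come from $Q^{d-1}$ and cannot be absorbed into $O(1/d)$ corrections. You correctly suspect this, but you do not supply the fix.

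The paper's fix is cleaner than the ``isolate the coordinate of largest modulus'' idea you float: do not bound the numerator before Hölder, but decompose it as
\[
1+\sum_{j=1}^d|k_j|^2=\bigl(1+|k_1|^2\bigr)+|k_2|^2+\cdots+|k_d|^2
\]
(attaching the ``$1$'' to a single, fixed coordinate), then apply Hölder to this $d$-term sum. After factorizing, the single first term gives your $R$ (the paper's $B$), but the other $d-1$ terms give $C:=\sum_{\ell\in\zz}\frac{|\ell|^{2p}}{(1+\ell^2)^{sp}}$, which crucially has \emph{no} $\ell=0$ contribution, and whose $\ell=\pm1$ contribution is $2\cdot 2^{-sp}=\tfrac{1}{d-1}$, so that $(d-1)C$ picks up only $+1$ from those terms. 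One then gets $B+(d-1)C\le 3+2d\sum_{k\ge2}\frac{1}{1+k^2}\le d(2.154+3/d)$, which is exactly what produces $C(d)=e(2.154+3/d)$. Your version replaces $B+(d-1)C$ by $dB$, and since $C<B$ term by term, that overshoot is precisely where the factor of roughly $2$ is lost.
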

\begin{proof} 
For any $p \ge 1$, H\"older's inequality implies
\beqq
\left(1+\sum \limits^d_{j=1} |k_j|^2\right)^p & = & \left(\left(1+|k_1|^2\right) + |k_2|^2 + \dots + |k_d|^2\right)^p
\\
& \le & d^{p-1} \left[\left(1+|k_1|^2\right)^p + |k_2|^{2p} + \dots + |k_d|^{2p}\right]\,,
\eeqq
whence, with the weight $w$ as given in (\ref{f3}),
\beqq
n a_n (I_d)^{2p} & \le &  \sum \limits^\infty_{n=1} a_n (I_d)^{2p} 
 =  \sum \limits^\infty_{n=1} a_n (D_w)^{2p} = 
\sum \limits_{k \in \zz^d} \frac{\Big(1+\sum^d_{\ell=1} |k_\ell|^2\Big)^p}{\prod^d_{j=1} (1+|k_j|^2)^{sp}}
\\
& \le &  d^{p-1} \left[\sum \limits_{k \in \zz^d} \frac{(1+|k_1|^2)^p} {\prod^d_{j=1} (1+|k_j|^2)^{sp}}
+\sum_{\ell=2}^d\sum \limits_{k \in \zz^d} \frac{|k_\ell|^{2p}} {\prod^d_{j=1} (1+|k_j|^2)^{sp}}\right]\,.
\eeqq
Note that the $d-1$ sums over $\ell = 2,...,d$ in the last line are all equal. Setting
$$
A:=\sum \limits_{k \in \zz} \frac{1}{(1+|k|^2)^{sp}}\quad,\quad
B:=\sum \limits_{k\in \zz} \frac{1}{(1+|k|^2)^{(s-1)p}}\quad,\quad
C:=\sum \limits_{k \in \zz} \frac{|k|^{2p}}{(|1+|k|^2)^{sp}}
$$
and evaluating the sums over $k\in\zz^d$ coordinate by coordinate we obtain
$$
n a_n (I_d)^{2p} \le d^{p-1} A^{d-1} (B+(d-1)C)\,.
$$ 
It remains to estimate $A,B,C$. We choose now 
$$
p := \frac{1+\log_2(d-1)}{s}\,.
$$ 
The above estimates required $p \ge 1$, which is equivalent to our assumption $d-1\ge 2^{s-1}$.
For $d\ge 3$ we have $sp\ge 1+\log_2(d-1)\ge 2$, and similarly as in the previous proof we obtain, now with
$x := \frac{2}{2^{sp}} = \frac{1}{d-1}$, 
\beqq
A & = & 1 + \underbrace{\frac{2}{2^{sp}}}_{=x} + 
\underbrace{\frac{2}{4^{sp}}}_{=\frac{x^2}{2}}\underbrace{\left(\left(\frac{4}{5}\right)^{sp} + 
\left(\frac{4}{10}\right)^{sp} + \dots \right)}_{\le (\frac{4}{5})^2 + (\frac{4}{10})^2 + \dots \le 1}
\\
& \le & 1 + x + \frac{x^2}{2} \le e^x=e^{1/(d-1)}\,, \quad\mbox{ whence } \, A^{d-1} \le e\,.
\eeqq
In the estimate of the term $B+(d-1)C$ we use that $(s-1)p \ge 1$, which is equivalent to 
$1 + \log_2 (d-1) \ge \frac{s}{s-1} = 1 + \frac{1}{s-1}$, i.e. equivalent to $d-1 \ge 2^{\frac{1}{s-1}}$.
We get 
\beqq
B + (d-1) C & = & 1 + 2 \sum \limits^\infty_{k=1} \frac{1}{(1+k^2)^{(s-1)p}} + 2 (d-1) \sum \limits^\infty_{k=1} \frac{k^{2p}}{(1+k^2)^{sp}}
\\
& \le & 1 + \underbrace{\frac{2}{2^{(s-1)p}}}_{\le 1}  + 2 \sum \limits^\infty_{k=2} \frac{1}{1+k^2} + 
\underbrace{\frac{2 (d-1)}{{2^{sp}}}}_{=1} +2(d-1) \sum \limits^\infty_{k=2} \frac{1}{1+k^2}
\\
& \le & 3 + 2d \sum \limits^\infty_{k=2}  \frac{1}{1+k^2} \le d \Big(\frac{3}{d} + 2.154\Big)\,.
\eeqq
Here we used
\beqq
\sum \limits^\infty_{k=1} \frac{1}{1+k^2} 
= \frac\pi 2\cdot\coth(\pi) = \frac\pi 2\cdot 1.0037418\dots\le 1.577\,.
\eeqq
This proves the desired estimate
$$n a_n(I_d)^{2p} \le d^p \cdot \underbrace{e \cdot (2.154+3/d)}_{=C(d)}\quad,\quad\text{hence}\quad 
a_n(I_d) \le \sqrt{d} \left(\frac{C(d)}{n}\right)^{\frac{s}{2(1+\log_2(d-1))}}\,.$$
\end{proof}

\begin{rem}\label{finalrem}\rm For $s=2$, Propositions \ref{main1} and \ref{main2} give 
the following estimates:
\begin{itemize}
\item[(i)] $a_n\left(I_d : H^2_{\mix} (\tor^d) \rightarrow H^1(\tor^d)\right) \le \left(\frac{e^2}{n}\right)^{\frac{1}{2 \log_2d}} \qquad\qquad (d \ge 4)$ ,
\item[(ii)] $a_n \left(I_d : H^2_{\mix} (\tor^d) \rightarrow H^1(\tor^d)\right)\le 
\sqrt{d} \left(\frac{e\,(2.154 + 3/d)}{n}\right)^{\frac{1}{1 + \log_2(d-1)}} \quad (d \ge 3)$\,.
\end{itemize}
Clearly, in large dimensions and for (moderate) $n$ in the preasymptotic range $n\le 2^d$, the second bound is better.
\end{rem}

\nocite{KUV,GrOs15}

\paragraph{Acknowledgment.} The authors would like to thank Aicke Hinrichs, Lutz K\"ammerer, David Krieg, 
Peter Oswald, Daniel Potts, Klaus Ritter and Henryk Wo{\'z}niakowski for several fruitful discussions on the topic. 
They particularly thank Kateryna Pozharska for reading the whole manuscript and making several useful suggestions 
to improve the presentation. Finally we have to thank two anonymous reviewers for valuable hints to improve the 
manuscript. T.U.\ would like to acknowledge support by the DFG Ul-403/2-1.

\bibliographystyle{abbrv}

\begin{thebibliography}{10}

\bibitem{AdIsNo12}
B.~Adcock, A.~Iserles, and S.~P. N{\o}rsett.
\newblock From high oscillation to rapid approximation {II}: expansions in
  {B}irkhoff series.
\newblock {\em IMA J. Numer. Anal.}, 32(1):105--140, 2012.

\bibitem{BG04}
H.-J. Bungartz and M.~Griebel.
\newblock Sparse grids.
\newblock {\em Acta Numer.}, 13:147--269, 2004.

\bibitem{ByDiWiUl14}
G.~{B}yrenheid, D.~{D}\~{u}ng, W.~{S}ickel, and T.~{U}llrich.
\newblock Sampling on energy-norm based sparse grids for the optimal recovery
  of {S}obolev type functions in ${H}^{\gamma}$.
\newblock {\em J. Approx. Theory}, 207:207--231, 2016.

\bibitem{CW1}
J.~Chen and H.~Wang.
\newblock Preasymptotics and asymptotics of approximation numbers of
  anisotropic {S}obolev embeddings.
\newblock {\em J. Complexity}, 39:94--110, 2017.

\bibitem{CW2}
J.~Chen and H.~Wang.
\newblock Approximation numbers of {S}obolev and {G}evrey type embeddings on
  the sphere and on the ball---preasymptotics, asymptotics, and tractability.
\newblock {\em J. Complexity}, 50:1--24, 2019.

\bibitem{CD}
A.~Chernov and D.~D\~{u}ng.
\newblock New explicit-in-dimension estimates for the cardinality of
  high-dimensional hyperbolic crosses and approximation of functions having
  mixed smoothness.
\newblock {\em J. Complexity}, 32(1):92--121, 2016.

\bibitem{ChkCoMiNoTe15}
A.~Chkifa, A.~Cohen, G.~Migliorati, F.~Nobile, and R.~Tempone.
\newblock Discrete least squares polynomial approximation with random
  evaluations -- application to parametric and stochastic elliptic {PDE}s.
\newblock {\em ESAIM: M2AN}, 49(3):815--837, 2015.

\bibitem{CoKuSi15}
F.~Cobos, T.~K\"{u}hn, and W.~Sickel.
\newblock Optimal approximation of multivariate periodic {S}obolev functions in
  the sup-norm.
\newblock {\em J. Funct. Anal.}, 270(11):4196--4212, 2016.

\bibitem{CKN}
R.~Cools, F.~Y. Kuo, and D.~Nuyens.
\newblock Constructing lattice rules based on weighted degree of exactness and
  worst case error.
\newblock {\em Computing}, 87(1-2):63--89, 2010.

\bibitem{DiGr}
D.~D\~{u}ng and M.~Griebel.
\newblock Hyperbolic cross approximation in infinite dimensions.
\newblock {\em J. Complexity}, 33:55--88, 2016.

\bibitem{DGHR}
D.~D\~{u}ng, M.~Griebel, V.~N. Huy, and C.~Rieger.
\newblock {$\varepsilon$}-dimension in infinite dimensional hyperbolic cross
  approximation and application to parametric elliptic {PDE}s.
\newblock {\em J. Complexity}, 46:66--89, 2018.

\bibitem{DuTeUl19}
D.~{D}\~ung, V.~N. {T}emlyakov, and T.~{U}llrich.
\newblock {\em {H}yperbolic {C}ross {A}pproximation}.
\newblock Advanced Courses in Mathematics. CRM Barcelona.
  Birkh\"auser/Springer, 2019.

\bibitem{DuTh20}
D.~D\~{u}ng and M.~X. Thao.
\newblock Dimension-dependent error estimates for sampling recovery on
  {S}molyak grids based on {B}-spline quasi-interpolation.
\newblock {\em J. Approx. Theory}, 250:105338, 30, 2020.

\bibitem{DiUl13}
D.~D\~{u}ng and T.~Ullrich.
\newblock {$n$}-widths and {$\varepsilon$}-dimensions for high-dimensional
  approximations.
\newblock {\em Found. Comput. Math.}, 13(6):965--1003, 2013.

\bibitem{Ga}
E.~M. Galeev.
\newblock The approximation of classes of functions with several bounded
  derivatives by {F}ourier sums.
\newblock {\em Mat. Zametki}, 23(2):197--212, 1978.

\bibitem{GHHRW}
M.~Gnewuch, M.~Hefter, A.~Hinrichs, K.~Ritter, and G.~W. Wasilkowski.
\newblock Embeddings for infinite-dimensional integration and
  {$L_2$}-approximation with increasing smoothness.
\newblock {\em J. Complexity}, 54:101406, 32, 2019.

\bibitem{GnWo11}
M.~Gnewuch and H.~Wo\'{z}niakowski.
\newblock Quasi-polynomial tractability.
\newblock {\em J. Complexity}, 27(3-4):312--330, 2011.

\bibitem{Gr}
M.~Griebel.
\newblock Sparse grids and related approximation schemes for higher dimensional
  problems.
\newblock In {\em Foundations of computational mathematics, {S}antander 2005},
  volume 331 of {\em London Math. Soc. Lecture Note Ser.}, pages 106--161.
  Cambridge Univ. Press, Cambridge, 2006.

\bibitem{GrKn08}
M.~Griebel and S.~Knapek.
\newblock Optimized general sparse grid approximation spaces for operator
  equations.
\newblock {\em Math. Comp.}, 78(268):2223--2257, 2009.

\bibitem{GO}
M.~Griebel and P.~Oswald.
\newblock Tensor product type subspace splittings and multilevel iterative
  methods for anisotropic problems.
\newblock {\em Adv. Comput. Math.}, 4(1-2):171--206, 1995.

\bibitem{GrOs15}
M.~Griebel and P.~Oswald.
\newblock Stable splittings of {H}ilbert spaces of functions of infinitely many
  variables.
\newblock {\em J. Complexity}, 41:126--151, 2017.

\bibitem{HH}
A.-L. Haji-Ali, H.~Harbrecht, M.~D. Peters, and M.~Siebenmorgen.
\newblock Novel results for the anisotropic sparse grid quadrature.
\newblock {\em J. Complexity}, 47:62--85, 2018.

\bibitem{HW}
J.~Hao and H.~Wang.
\newblock Strong equivalences of approximation numbers and tractability of
  weighted anisotropic {Sobolev embeddings}.
\newblock {\em arXiv:1907.00589}, 2019.

\bibitem{IKPW}
C.~Irrgeher, P.~Kritzer, F.~Pillichshammer, and H.~Wo\'{z}niakowski.
\newblock Tractability of multivariate approximation defined over {H}ilbert
  spaces with exponential weights.
\newblock {\em J. Approx. Theory}, 207:301--338, 2016.

\bibitem{KUV}
L.~K\"ammerer, T.~Ullrich, and T.~Volkmer.
\newblock Worst case recovery guarantees for least squares approximation using
  random samples.
\newblock {\em arXiv:1911.10111}, 2019.

\bibitem{Ko}
H.~K\"{o}nig.
\newblock {\em Eigenvalue distribution of compact operators}, volume~16 of {\em
  Operator Theory: Advances and Applications}.
\newblock Birkh\"{a}user Verlag, Basel, 1986.

\bibitem{DK}
D.~Krieg.
\newblock Tensor power sequences and the approximation of tensor product
  operators.
\newblock {\em J. Complexity}, 44:30--51, 2018.

\bibitem{KrUl19}
D.~Krieg and M.~Ullrich.
\newblock Function values are enough for ${L}_2$-approximation.
\newblock {\em arXiv:math/1905.02516v3}, 2019.

\bibitem{KPW}
P.~Kritzer, F.~Pillichshammer, and H.~Wo\'{z}niakowski.
\newblock Tractability of multivariate analytic problems.
\newblock In {\em Uniform distribution and quasi-{M}onte {C}arlo methods},
  volume~15 of {\em Radon Ser. Comput. Appl. Math.}, pages 147--170. De
  Gruyter, Berlin, 2014.

\bibitem{Ku}
T.~K\"uhn.
\newblock New preasymptotic estimates for the approximation of periodic
  {S}obolev functions.
\newblock In {\em 2018 {MATRIX} annals}, volume~3 of {\em MATRIX Book Ser.},
  pages 97--112. Springer, Cham, 2020.

\bibitem{KMU}
T.~K\"{u}hn, S.~Mayer, and T.~Ullrich.
\newblock Counting via entropy: new preasymptotics for the approximation
  numbers of {S}obolev embeddings.
\newblock {\em SIAM J. Numer. Anal.}, 54(6):3625--3647, 2016.

\bibitem{KSU1}
T.~K\"{u}hn, W.~Sickel, and T.~Ullrich.
\newblock Approximation numbers of {S}obolev embeddings---sharp constants and
  tractability.
\newblock {\em J. Complexity}, 30(2):95--116, 2014.

\bibitem{KSU2}
T.~K\"{u}hn, W.~Sickel, and T.~Ullrich.
\newblock Approximation of mixed order {S}obolev functions on the {$d$}-torus:
  asymptotics, preasymptotics, and {$d$}-dependence.
\newblock {\em Constr. Approx.}, 42(3):353--398, 2015.

\bibitem{KSW}
F.~Y. Kuo, I.~H. Sloan, and H.~Wo\'{z}niakowski.
\newblock Lattice rules for multivariate approximation in the worst case
  setting.
\newblock In {\em Monte {C}arlo and quasi-{M}onte {C}arlo methods 2004}, pages
  289--330. Springer, Berlin, 2006.

\bibitem{Mie}
T.~Mieth.
\newblock Sharp estimates for approximation numbers of non-periodic {S}obolev
  embeddings.
\newblock {\em J. Complexity}, 54, 2019.

\bibitem{Mi}
B.~S. Mitjagin.
\newblock Approximation of functions in {$L^{p}$} and {$C$} spaces on the
  torus.
\newblock {\em Mat. Sb. (N.S.)}, 58 (100):397--414, 1962.

\bibitem{Nia}
N.~S. Nikol\'skaja.
\newblock Approximation of differentiable functions of several variables by
  {F}ourier sums in the {$L_{p}$} metric.
\newblock {\em Sibirsk. Mat. \v{Z}.}, 15:395--412, 461, 1974.

\bibitem{No19}
E.~Novak.
\newblock Algorithms and complexity for functions on general domains.
\newblock {\em arxiv:~1908.05943v1}, 2019.

\bibitem{NoWo08}
E.~Novak and H.~Wo\'{z}niakowski.
\newblock {\em Tractability of multivariate problems. {V}ol. 1: {L}inear
  information}, volume~6 of {\em EMS Tracts in Mathematics}.
\newblock European Mathematical Society (EMS), Z\"{u}rich, 2008.

\bibitem{NoWo09}
E.~Novak and H.~Wo\'{z}niakowski.
\newblock Approximation of infinitely differentiable multivariate functions is
  intractable.
\newblock {\em J. Complexity}, 25:398--404, 2009.

\bibitem{PW}
A.~Papageorgiou and H.~Wo\'{z}niakowski.
\newblock Tractability through increasing smoothness.
\newblock {\em J. Complexity}, 26(5):409--421, 2010.

\bibitem{Pi78}
A.~Pietsch.
\newblock {\em Operator ideals}, volume~16 of {\em Mathematische Monographien
  [Mathematical Monographs]}.
\newblock VEB Deutscher Verlag der Wissenschaften, Berlin, 1978.

\bibitem{Pi07}
A.~Pietsch.
\newblock {\em History of {B}anach spaces and linear operators}.
\newblock Birkh\"{a}user Boston, Inc., Boston, MA, 2007.

\bibitem{Pin}
A.~Pinkus.
\newblock {\em {$n$}-widths in approximation theory}, volume~7 of {\em
  Ergebnisse der Mathematik und ihrer Grenzgebiete (3) [Results in Mathematics
  and Related Areas (3)]}.
\newblock Springer-Verlag, Berlin, 1985.

\bibitem{RaWa12}
H.~Rauhut and R.~Ward.
\newblock Sparse {L}egendre expansions via {$\ell_1$}-minimization.
\newblock {\em J. Approx. Theory}, 164(5):517--533, 2012.

\bibitem{SST08}
C.~Schwab, E.~S\"{u}li, and R.~A. Todor.
\newblock Sparse finite element approximation of high-dimensional
  transport-dominated diffusion problems.
\newblock {\em M2AN Math. Model. Numer. Anal.}, 42(5):777--819, 2008.

\bibitem{Sid}
P.~Siedlecki.
\newblock Uniform weak tractability of multivariate problems with increasing
  smoothness.
\newblock {\em J. Complexity}, 30(6):716--734, 2014.

\bibitem{SW}
I.~H. Sloan and H.~Wo\'{z}niakowski.
\newblock When are quasi-{M}onte {C}arlo algorithms efficient for
  high-dimensional integrals?
\newblock {\em J. Complexity}, 14(1):1--33, 1998.

\bibitem{Tel}
S.~A. Teljakovski\u{\i}.
\newblock Some bounds for trigonometric series with quasi-convex coefficients.
\newblock {\em Mat. Sb. (N.S.)}, 63 (105):426--444, 1964.

\bibitem{T93b}
V.~N. Temlyakov.
\newblock {\em Approximation of periodic functions}.
\newblock Computational Mathematics and Analysis Series. Nova Science
  Publishers, Inc., Commack, NY, 1993.

\bibitem{Tem17}
V.~N. Temlyakov.
\newblock {\em Multivariate approximation}, volume~32 of {\em Cambridge
  Monographs on Applied and Computational Mathematics}.
\newblock Cambridge University Press, Cambridge, 2018.

\bibitem{We}
J.~Weidmann.
\newblock {\em Lineare {O}peratoren in {H}ilbertr\"{a}umen}.
\newblock B. G. Teubner, Stuttgart, 1976.
\newblock Mathematische Leitf\"{a}den.

\bibitem{WerWoz08}
A.~G. Werschulz and H.~Wo\'{z}niakowski.
\newblock Tractability of multivariate approximation over a weighted unanchored
  {S}obolev space.
\newblock {\em Constr. Approx.}, 30(3):395--421, 2009.

\bibitem{WW}
A.~G. Werschulz and H.~Wo\'{z}niakowski.
\newblock Tractability of multivariate approximation over weighted standard
  {S}obolev spaces.
\newblock {\em J. Complexity}, 53:95--112, 2019.

\end{thebibliography}

\end{document}